\def\@cite#1#2{{\m@th\upshape\bfseries%
[{#1\if@tempswa{\m@th\upshape\mdseries, #2}\fi}]}} \makeatother
\theoremstyle{plain}
\newtheorem{theorem}{Theorem}[section]
\newtheorem{corollary}[theorem]{Corollary}
\newtheorem{proposition}[theorem]{Proposition}
\newtheorem{lemma}[theorem]{Lemma}
\theoremstyle{definition}
\newtheorem{definition}[theorem]{Definition}
\newtheorem{example}[theorem]{Example}
\newtheorem{remark}[theorem]{Remark}
\newtheorem*{notation}{Remarks on Notation and Conventions}
\def\bbC{\mathbb C}
\def\bbF{\mathbb F}
\def\bbN{\mathbb N}
\def\bbT{\mathbb T}
\def\bbZ{\mathbb Z}
\newcommand{\fA}{\mathfrak A}     
\newcommand{\sA}{\mathcal A}
\newcommand{\sB}{\mathcal B}
\newcommand{\sC}{\mathcal C}
\newcommand{\sF}{\mathcal F}
\newcommand{\sH}{\mathcal H}
\newcommand{\sI}{\mathcal I}
\newcommand{\sJ}{\mathcal J}
\newcommand{\sK}{\mathcal K}
\newcommand{\sL}{\mathcal L}
\newcommand{\sM}{\mathcal M}
\newcommand{\sN}{\mathcal N}
\newcommand{\sO}{\mathcal O}
\newcommand{\sR}{\mathcal R} 
\newcommand{\sS}{\mathcal S}   
\newcommand{\sT}{\mathcal T}
\newcommand{\sX}{\mathcal X}   
\newcommand{\al}{\alpha}
\newcommand{\be}{\beta}
\newcommand{\si}{\sigma}
\newcommand{\de}{\delta}
\newcommand{\om}{\omega}
\newcommand{\ga}{\gamma}
\newcommand{\ca}{\mathrm{C}^*}
\newcommand{\alg}{\operatorname{alg}}
\newcommand{\Aut}{\operatorname{Aut}}
\newcommand{\ad}{\operatorname{ad}}
\newcommand{\spn}{\operatorname{span}}
\newcommand{\id}{{\operatorname{id}}}
\newcommand{\foral}{\text{ for all }}
\newcommand{\End}{{\operatorname{End}}}
\newcommand{\cenv}{\mathrm{C}^*_{\textup{env}}}
\newcommand{\nor}[1]{\left\Vert #1\right\Vert}       
\newcommand{\sca}[1]{\left\langle#1\right\rangle}  
\begin{document}

\title[Dynamical Systems and Cuntz families]{Representations of $\ca$-dynamical systems implemented by Cuntz families}

\author{Evgenios T.A. Kakariadis}
\address{Pure Mathematics Department, University of Waterloo,
   Ontario N2L3G1, Canada}
\email{ekakaria@uwaterloo.ca}

\author{Justin R. Peters}
\address{Department of Mathematics\\
   Iowa State University, Ames, Iowa, USA}
\email{peters@iastate.edu}

\thanks{2010 {\it  Mathematics Subject Classification.}
46L07, 47L25, 47L55, 47L65}
\thanks{{\it Key words and phrases:} dynamical systems, $\ca$-envelope, generalized crossed product, $\ca$-correspondences.}
\thanks{First author partially supported by the Fields Institute for Research in the Mathematical Sciences}
\thanks{The second author acknowledges partial support from the National Science Foundation, DMS-0750986}

\maketitle

\vspace{-.5cm}
\begin{center}
{\small \emph{Dedicated to the memory of William B. Arveson}}
\end{center}

\begin{abstract}
Given a dynamical system $(A,\al)$ where $A$ is a unital $\ca$-algebra and $\al$ is a (possibly non-unital) $*$-endomorphism of $A$, we examine families $(\pi,\{T_i\})$ such that $\pi$ is a representation of $A$, $\{T_i\}$ is a Toeplitz-Cuntz family and a covariance relation holds. We compute a variety of non-selfadjoint operator algebras that depend on the choice of the covariance relation, along with the smallest $\ca$-algebra they generate, namely the $\ca$-envelope. We then relate each occurrence of the $\ca$-envelope to (a full corner of) an appropriate twisted crossed product. We provide a counterexample to show the extent of this variety. In the context of $\ca$-algebras, these results can be interpreted as analogues of Stacey's famous result, for non-automorphic systems and $n>1$.

Our study involves also the one variable generalized crossed products of Stacey and Exel. In particular, we refine a result that appears in the pioneering paper of Exel on (what is now known as) Exel systems.
\end{abstract}

\section*{Introduction}

For this paper, a dynamical system $(A,\al)$ consists of a unital $\ca$-algebra $A$ and a (possibly non unital) $\al \in \End(A)$. We study the representation theory of universal objects that satisfy (more or less) the covariance relation
\[
\pi(\al(x)) = \sum_{i=1}^n T_i \pi(x) T_i^*, \foral x\in A,
\]
where $\{T_i\}$ is a Toeplitz-Cuntz family and $n \geq 1$. We were initially motivated by \cite{Cun77, Cun81, CouMuhSch10, Lac93, Mur02, Sta93}. Stacey's \emph{multiplicity $n$ crossed product} is the universal $\ca$-algebra $A \times_\al^n \bbN$, when $\pi$ is additionally assumed to be non-degenerate \cite{Sta93}, and a family of such representations is inherited by Cuntz's \emph{twisted crossed product} $A_\infty \rtimes_{\al_\infty} \sO_n$ \cite{Cun81}. Therefore a canonical $*$-epimorphism $A\times_\al^n \bbN \rightarrow p(A_\infty \rtimes_{\al_\infty} \sO_n)p$ is induced, where $p=[1_A] \in A_\infty$. Stacey shows that this $*$-epimorphism is injective when $n=1$ or $\al$ is an automorphism \cite[Propositions 3.3 and 3.4]{Sta93}, and one can ask for an extension of this result. The counterexample provided in Subsection \ref{S:counter}, shows that this is hopeless, and another approach should be considered.

This approach is given in the context of operator algebras both selfadjoint and non-selfadjoint, influenced by the ideas of Arveson \cite{Ar69, Ar08} and the work of many researchers in dilation theory (see \cite{BleMuPau00, DavKatsMem, DrMc05, Dun08, Ham79, KatsKribs06, Lac00, MS}, to mention but a few). Instead of examining \emph{a large} universal $\ca$-algebra $\sT_\sS$ generated by a system of relations $\sS$, one considers the non-involutive analogue $\sA_\sS$ subject to $\sS$. The goal is to associate \emph{the smallest} $\ca$-algebra $\cenv(\sA_\sS)$ that is generated by $\sA_\sS$, namely the $\ca$-envelope of $\sA_\sS$, to $\ca$-algebras generated by an invertible system $\sS'$ that dilates $\sS$. We coin this as \emph{Arveson's program on the $\ca$-envelope}\footnote{\ We emphasize that the term $\ca$-envelope must not be confused with the notion of an \emph{enveloping $\ca$-algebra}. The first one is a minimal $\ca$-algebra associated to $\sA_\sS$, whereas the second one is the maximal $\ca$-algebra $\sT_\sS$. See Subsection \ref{Ss:opalg} for the pertinent definitions.}.

There are some interesting consequences derived from this approach. First of all the $\ca$-envelope possesses a ring theoretical injective property. Thus in the following short exact sequence
\[
0 \longrightarrow \sJ \longrightarrow \sT_\sS \longrightarrow \cenv(\sA_\sS) \longrightarrow 0,
\]
the algebra $\sA_\sS$ is preserved isometrically by the unique $*$-epimorphism $\sT_\sS \rightarrow \cenv(\sA_\sS)$. The same scheme holds by substituting $\sT_\sS$ with any $\ca$-algebra generated by an isometric copy of $\sA_\sS$. Taking into account that $\sS$ is reflected isometrically inside $\sT_\sS$, hence in $\sA_\sS$, this gives a picture on how narrow one can be on defining universal $\ca$-algebras related to $\sS$. One cannot go beyond the minimal $\cenv(\sA_\sS)$. Additionally, the ideal $\sJ$ in $\sT_\sS$, namely \emph{the \v{S}ilov ideal} of $\sA_\sS$, plays the role of the non-commutative \v{S}ilov boundary and it is equipped with the analogous properties.

The motivating and intriguing part of this theory is that, even though $\cenv(\sA_\sS)$ always exists as a minimal object \cite{Ham79, DrMc05, Ar06, Kak11-2}, the interest lies exactly on connecting it to a natural $\ca$-algebra of invertible $\sS$-like relations. Nevertheless, the possibility of $\cenv(\sA_\sS)$ being $\sT_\sS$ is not excluded. In this paper we are interested in both such cases for a fixed system $\sS$.

To this end we define the non-selfadjoint operator algebras $A_{\textup{nd}} \times_\al^t \sT_n^+$ and $A_{\textup{nd}} \times_\al \sT_n^+$ in analogy with  Cuntz's twisted crossed product and Stacey's crossed product respectively, where the use of the superscript ``$t$'' is self-explanatory. In Theorem \ref{T:twi spr 2} we show that $\cenv(A_{\textup{nd}} \times_\al^t \sT_n^+)$ is a full corner of the twisted crossed product $A_\infty \rtimes_{\al_\infty} \sO_n$. This is the analogue of \cite[Propositions 3.3 and 3.4]{Sta93} for non-automorphic systems and $n>1$. On the other hand $\cenv(A_{\text{nd}} \times_\al \sT_n^+)$ is simply $A \times_\al ^n \bbN$ when $\al$ is unital (see Theorem \ref{T:env *-auto}). In fact the given counterexample shows that $A_{\text{nd}} \times_\al^t \sT_n^+$ and $A_{\text{nd}} \times_\al \sT_n^+$ are not canonically isomorphic unless $\al$ is an automorphism. From one point of view \cite{KPTT, OikPis99}, $A_{\text{nd}} \times_\al \sT_n^+$ resembles a \emph{maximal twisted product}, in contrast to $A_{\text{nd}} \times_\al^t \sT_n^+$ which resembles a \emph{minimal twisted product}; they coincide when $\al$ is an automorphism, but not in general.

The notation ${A}_\text{nd}$ emphasizes on that $A$ is represented non-degenerately inside the universal objects. One can define $A \times_\al^t \sT_n^+$ and/or $A\times_\al \sT_n^+$, without imposing this requirement. By appropriately weighting the generators, we show that $A_{\text{nd}} \times_\al^t \sT_n^+$ embeds in $A\times_\al^t\sT_n^+$, and in Theorem \ref{T:twi spr} we give the connection of $\cenv(A \times_\al^t \sT_n^+)$ with the twisted crossed product. This is the stepping stone to get the results explained in the previous paragraph.

We give applications of our results in cases that involve \emph{a transfer operator} $L$ following \cite[Section 4]{Exe03} and \cite{CouMuhSch10}. To do so we use the language of $\ca$-correspondences. In Theorem \ref{T:tens} we show that the Cuntz-Pimsner algebra of the correspondence $\sM_L\otimes X_n$ is a full corner of the twisted crossed product and has a rather special form. Here $\sM_L$ is Exel's correspondence \cite{Exe03} and $X_n$ is the Hawaiian earring graph on $n$ edges.

\emph{Exel systems} $(A,\al,L)$ were introduced in the pioneering paper of Exel \cite{Exe03} and have been under considerable investigation in a series of papers \cite{BroRae04, BroRaeVit09, HueRae11, Lar10}. In these papers a similar representation theory is used, when $n=1$. For this reason, we present the one-variable case separately, as it may be of independent interest. This allows us also to simplify proofs for $n>1$. A third reason for following this presentation of our results is that we correct an error in \cite[Theorem 4.7]{Exe03}: it is claimed that, when $\al$ is injective and $\al(A)$ is hereditary, then $A\rtimes_{\al,L} \bbN$ \cite[Definition 3.7]{Exe03} is $*$-isomorphic to $\fA(A,\al)$\cite[Definition 4.4]{Exe03}. However, in Theorem \ref{T:Exel} we show that $A\rtimes_{\al,L} \bbN$ is actually $*$-isomorphic to $A \times_\al^1 \bbN$, while in Proposition \ref{P:criterion} we show that $\fA(A,\al)$ coincides with
$A \times_{\al}^1 \bbN$ exactly when $\al(1_A)=1_A$. A fourth important reason is that $A_{\text{nd}} \times_\al \sT_n^+$ behaves in a different way when $n=1$, since $A_{\text{nd}} \times_\al \sT_1^+ \simeq A_{\text{nd}} \times_\al^t \sT_1^+$ (cf. Theorem \ref{T: not unit}, Theorem \ref{T: not unit multi}, Subsection \ref{S:counter}).

In Proposition \ref{P: Kats ideal} we give necessary and sufficient conditions for $A$ to be embedded injectively in $A\rtimes_{\al,L} \bbN$, adding to results obtained by Brownlowe and Raeburn \cite{BroRae04}. In fact, when $\al$ is unital or $(A,\al,L)$ is as in \cite[Section 4]{Exe03}, then $A\rtimes_{\al,L} \bbN$ is the Cuntz-Pimsner algebra of $\sM_L$, which we prove is a Hilbert bimodule (see Proposition \ref{P:unit simpli} and Theorem \ref{T:Exel}). This is quite surprising.
We remark that Brownlowe, Raeburn and Vitadello \cite{BroRaeVit09} examine Exel systems arising from commutative $\ca$-algebras, for which they show that $A\rtimes_{\al,L}\bbN$ is again the Cuntz-Pimsner algebra $\sO_{\sM_L}$. We wonder whether this is true in general; yet this lies outside the purview of this paper and is to be pursued elsewhere.

The paper is divided into three parts. In Section \ref{S:nota} we present notation and give the constructions that are used in the sequel. In Section \ref{S:one} we present the results concerning the one variable case. In Section \ref{S:multi} we proceed to the examination of the multivariable case. We conclude by giving the counterexample mentioned earlier.

We emphasize that in general the dynamical systems $(A, \al)$ are neither injective nor unital. Non-injectivity is treated by combining \cite{Pet84, Sta93, Kak11}: the objects related to $(A,\al)$ are identified with the objects related to the injective system $(A/\sR_\al, \dot\al)$. (Therefore the copy of $A$ inside these objects is $A/\sR_\al$, and it is $A$ exactly when $\al$ is injective.)
On the other hand, the trick of ``weighting'' the generators makes dilation theory far more tractable for our purposes and helps us treat the non-unital cases. We hope that the reader will discover applications of these tricks to other constructions.

\begin{notation}
The non-involutive part $\sT_n^+$ of the Toeplitz-Cuntz algebra $\sT_n$ is the linear span of elements $s_\mu$, for a word $\mu$ in $\bbF_n^+$. Whereas it would be convenient to use the semigroup $\bbF_n^+$ (resp. $\bbN$) in our notations when possible, we avoid doing so since the group $\bbF_n$ (resp. $\bbZ$) generated by $\bbF_n^+$ (resp. $\bbN$) does not contain the analogous relations that hold in $\sT_n$. Therefore $\sT_n^+$ and $\bbF_n^+$ differ based on their representation theory and we want to emphasize that distinction in our notation. (Note that in this way we deliberately avoid any confusion with the notation of \cite{DavKatsMem} and \cite{Li}.) The isometric copy of $\sT_n^+$ in $\sO_n$ will be denoted by $\sO_n^+$.

We will say that a family $\{T_i\}_{i=1}^n$ of operators in a $\sB(H)$ is a \emph{Toeplitz-Cuntz family} (resp. a \emph{Cuntz family}) if the $T_i$ are isometries with orthogonal ranges and $\sum_{i=1}^n T_i \leq I_H$ (resp. $\sum_{i=1}^n T_i =I_H$). For sake of simplicity we will often write $T_i \in \sT_n^+$ (resp. $T_ i \in \sO_n^+$), meaning that $\{T_i\}_{i=1}^n$ is a Toeplitz-Cuntz family (resp. a Cuntz family).

Throughout the paper a number of different operator algebras are defined, and for this reason we include an Index at the end of this paper. However, even if the established notation may not be self-contained, we do not proceed to change it. For example, Stacey's crossed product $A \times_\al^n \bbN$ could be denoted alternatively by $A_{\textup{nd}} \times_\al \sT_n$, as follows by its representation theory.

Finally, we make the convention that a \emph{canonical representation} is a representation that maps generators to generators of the same index.
\end{notation}

\section{Operator Algebras and Constructions} \label{S:nota}

\subsection{Operator Algebras and the $\ca$-envelope}\label{Ss:opalg}

An operator algebra $\sA$ is a closed subalgebra of some $\sB(H)$, where $H$ is a Hilbert space. When $\sA$ is closed under the involution inherited by $\sB(H)$ then it is a $\ca$-algebra. The representation theory of an operator algebra consists of \emph{completely contractive $($resp. isometric$)$ homomorphisms} $\rho \colon \sA \rightarrow \sB(H)$, i.e., every homomorphism $\rho_k \equiv \id_k \otimes \rho \colon M_k(\sA) \rightarrow \sB(H^{(k)})$ is contractive (resp. isometric).

A representation $\nu \colon \sA \rightarrow \sB(K)$ is called a \emph{dilation} of $\rho$ if $\rho(a)=P_H \nu(a)|_H$ for all $a\in \sA$. A complete isometry $\rho$ is called \emph{maximal} if any dilation $\nu$ of $\rho$ is trivial. By \cite{DrMc05} every complete isometry has a dilation that is maximal. Therefore $\rho$ is maximal if and only if, given a maximal dilation $\nu$ of $\rho$, there is a representation $\si$ such that $\nu= \rho \oplus \si$.

A pair $(\sC, \rho)$ such that $\sC$ is a $\ca$-algebra, $\rho\colon \sA \rightarrow \sC$ is a complete isometry and $\sC= \ca(\rho(\sA))$, is called a \emph{$\ca$-cover of $\sA$}. An ideal $J$ in $\sC$ is called \emph{boundary} if the restriction of the quotient map $q_J$ to $\rho(\sA)$ is a complete isometry; \emph{the \v{S}ilov ideal} $\sJ$ is the largest boundary ideal. Consequently, the pair $(\sC/\sJ, q_\sJ \circ \rho)$, called \emph{the $\ca$-envelope of $\sA$} and denoted simply by $\cenv(\sA)$, is the smallest $\ca$-cover generated by $\sA$. (It is clear that $\sC/\sJ$ contains no non-trivial boundary ideals.) In fact the $\ca$-envelope has a universal property: for every $\ca$-cover $(\sC,\rho)$ of $\sA$ there is a (necessarily) unique $*$- epimorphism $\Phi\colon \sC \rightarrow \cenv(\sA)$ such that $\Phi(\rho(a)) = a$ for all $a\in \sA$.

\begin{lemma}\label{L:fful_c*-env}
Let $\pi:\cenv(\sA) \rightarrow \sB(H)$ be a $*$-homomorphism. Then $\pi$ is faithful if and only if $\pi|_{\sA}$ is a complete isometry.
\end{lemma}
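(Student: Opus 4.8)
The plan is to play off the very definition of the $\ca$-envelope as the $\ca$-cover of $\sA$ with trivial \v{S}ilov ideal. One implication is immediate: if $\pi$ is faithful, then it is an injective $*$-homomorphism of $\ca$-algebras, hence isometric; since moreover every $*$-homomorphism is completely contractive, each amplification $\pi_k$ is again an injective $*$-homomorphism, thus isometric. Therefore $\pi$ is a complete isometry, and so is its restriction to $\sA$.

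For the converse, suppose $\pi|_{\sA}$ is a complete isometry and set $J = \ker \pi$, an ideal of $\cenv(\sA)$. Write $q_J \colon \cenv(\sA) \to \cenv(\sA)/J$ for the quotient $*$-epimorphism and let $\bar\pi \colon \cenv(\sA)/J \to \sB(H)$ be the induced injective $*$-homomorphism, so that $\pi = \bar\pi \circ q_J$. By the first paragraph $\bar\pi$ is a complete isometry. Hence $q_J|_{\sA} = \bar\pi^{-1} \circ \pi|_{\sA}$ is a complete isometry onto its image; that is, the restriction of $q_J$ to the canonical copy of $\sA$ inside $\cenv(\sA)$ is completely isometric. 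Note also that $\cenv(\sA)/J = \ca(q_J(\sA))$, because $\cenv(\sA) = \ca(\sA)$; thus $(\cenv(\sA)/J, q_J|_{\sA})$ is a $\ca$-cover of $\sA$ and $J$ is a boundary ideal of $\cenv(\sA)$.

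Finally, recall that $\cenv(\sA)$ contains no non-trivial boundary ideals — equivalently, its \v{S}ilov ideal is $\{0\}$ — since it is obtained from any $\ca$-cover by quotienting out the \v{S}ilov ideal. Therefore $J = \{0\}$, i.e.\ $\pi$ is faithful. I expect no genuine obstacle here: the entire content is the unwinding of the definitions of boundary ideal and of the $\ca$-envelope, the only point meriting an explicit line being the verification that $\cenv(\sA)/J$ is again a $\ca$-cover of $\sA$, which follows from $\cenv(\sA) = \ca(\sA)$ together with the complete isometry of $q_J|_{\sA}$ established above.
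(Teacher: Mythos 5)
Your proof is correct and follows essentially the same route as the paper: the forward direction is the standard fact that faithful $*$-homomorphisms are completely isometric, and the converse factors $\pi$ through $\cenv(\sA)/\ker\pi$ to recognize $\ker\pi$ as a boundary ideal, which must vanish since the $\ca$-envelope has no non-trivial boundary ideals. Your write-up just makes explicit the verification (that the quotient is a $\ca$-cover and $q_{\ker\pi}|_{\sA}$ is completely isometric) which the paper leaves implicit.
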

\begin{proof}
If $\pi$ is faithful then it is a complete isometry, hence $\pi|_\sA$ is a complete isometry. For the converse, the $*$-homomorphism $\widetilde{\pi}\colon \cenv(\sA)/\ker\pi \rightarrow \sB(H)$ is faithful. Hence $\ker\pi=(0)$, as a boundary ideal of $\sA$ in $\cenv(\sA)$.
\end{proof}

Unlike \cite{Ar08}, a representation $\pi$ of a $\ca$-cover $\sC$ of $\sA$ is called \emph{boundary} if the restriction of $\pi$ to $\sA$ is maximal \cite{DrMc05}. We say that $\sA$ has \emph{the unique extension property} if any faithful representation $\pi\colon \cenv(\sA) \rightarrow \sB(H)$ is a boundary representation \cite{Dun08}. In particular the free atomic representation of $\cenv(\sA)$ is boundary, therefore the irreducible representations of $\cenv(\sA)$ are boundary as direct summands of a boundary representation. Therefore, if $\sA$ has the unique extension property, then it admits a \emph{Choquet boundary} in the sense of \cite{Ar08}, i.e., the existence of sufficiently many irreducible boundary representations of $\cenv(\sA)$.

The existence of the $\ca$-envelope was first proved by Arveson in the case where there were enough boundary representations \cite{Ar69}. The first proof for the general case was given by Hamana \cite{Ham79}. Twenty five years later Dritschel and McCullough \cite{DrMc05} gave an independent proof for the existence of the $\ca$-envelope, simplified later by Arveson \cite{Ar06}. The first author gives an independent proof of Hamana's Theorem in \cite{Kak11-2}.
The existence of the Choquet boundary for separable operator systems (or operator algebras) was proved by Arveson \cite{Ar08} and it is still an open problem for the non-separable cases. Recently, Kleski \cite{Kle11} has proved that, for the separable case, the supremum can be replaced by the pointwise maximum.

We remark that the notion of the $\ca$-envelope should not be confused with the notion of the enveloping $\ca$-algebra of an involutive Banach algebra. The universal property of the $\ca$-envelope suggests that it is the smallest $\ca$-cover of $X$. One might want to rename the $\ca$-envelope as the \textit{$\ca$-minimal cover}, though it seems impractical to try to change established terminology.

Below we describe the construction of universal/enveloping (in general non-selfadjoint) operator algebras.

\subsection{Universal Operator Algebras}

An effective way to create a universal operator algebra $\sB$ with respect to generators and relations is, first to form the corresponding universal $\ca$-algebra $A$ in the sense of Blackadar \cite{Bla85}, and then consider the appropriate operator subalgebra $\sB$ of $A$. The universal representation of $\sB$ is then the restriction of the universal representation of $A$ to $\sB$.

A second way is to construct an operator algebra $\sB$ relative to (a certain family of) contractive representations of a Banach algebra $B$ with the property that every contractive homomorphism (in this family) of $B$ acting on a Hilbert space \emph{lifts} to a completely contractive homomorphism of $\sB$. (If in addition $B$ has an involution to take in account, then $\sB$ is the enveloping $\ca$-algebra of $B$.) Below is a brief discussion of the construction of such a universal object.

Let $B$ be a Banach algebra, $\sB$ be an operator algebra and $\phi \colon B \rightarrow \sB$ be a contractive algebraic homomorphism. We say that $\rho$ lifts to a completely contractive homomorphism $\widetilde{\rho}$ if the following diagram is commutative
\[
\xymatrix{
B \ar[r]^\phi \ar[dr]^\rho & \sB \ar@{-->}[d]^{\widetilde{\rho}} \\
& \sB(H)
}
\]

Suppose that the cardinality of $B$ is less or equal to a cardinal $\be$ that we choose such that $\be^{\aleph_0} = \be$. First let $\sF$ be the set of contractive representations $(H_i,\rho_i)$ of $B$ such that $\dim(H) \leq \be$. Let $\sH = \oplus_{i \in \sF} H_i$, $\phi := \oplus_{i \in \sF} \rho_i$, and $\sB: = \overline{\phi(B)}^{\sB(\sH)}$. Then $\sB$ is an operator algebra with the operator structure inherited from $\sB(\sH)$.  Equivalently, let the seminorms on $M_k(B)$
\begin{align*}
\om_k([a_{ij}])= \sup\{ \nor{[\rho(a_{ij})]}_{\sB(H_\rho^{(k)})} \mid
(H_\rho,\rho) \in \sF\}.
\end{align*}
If $\sN=\ker\om_1 (=\ker\phi)$, then $M_k(\sN)=\ker\om_k$. Hence the family of the induced norms $\nor{\cdot}_k$ on $M_k(B/\sN)$ with $\nor{[b_{ij}+\sN]}_k= \om_k([b_{ij}])$ is defined. Then $\sB$ is the completion of the image of $B / \sN$ in $\sB(\sH)$ (cf. \cite[Subsection 2.4.6]{BleLeM04}). Every $(H_i, \rho_i) \in \sF$ lifts to the completely contractive homomorphism $\widetilde{\rho_i}(\cdot) := P_{H_i} \cdot |_{H_i}$ of $\sB$. We will refer to $\id_{\sB} \equiv \oplus_{i \in \sF} \widetilde{\rho_i}$ as \emph{the universal representation of $\sB$} (the representations $(H_i, \widetilde{\rho_i})$ are the building blocks, as shown below).

Let $(H,\rho)$ be a contractive representation of $B$ where $H$ has arbitrary dimension, say $J_0 = \dim(H)$. Let the set $\sS$ consist of pairs $(J, \{K_j\}_{j \in J})$ such that $J \subseteq J_0$, the $K_j$ are mutually orthogonal subspaces of $H$ with $\dim(K_j) \leq \be$ and every $K_j$ is reducing for $\rho(B)$, i.e., every $K_j$ is $\rho(B)$-invariant and $\rho(B)^*$-invariant. The set $\sS$ is non-empty. Indeed, let $C$ be the $\ca$-algebra generated by $\rho(B)$ inside $\sB(H)$. Then the cardinality of $C$ is less or equal to $\be$, since it is the closure of the span of monomials of the form
\[
(\rho(a_1)^*)^{\epsilon_1}\rho(b_1) \rho(a_2)^* \cdots (\rho(a_n)^*)^{\epsilon_2}, \text{ with } \epsilon_1, \epsilon_2 = 0,1, \text{ and } a_l, b_l \in B.
\]
Then the Hilbert subspace $[C \xi]$, for $\xi \in H$, is reducing for $C$ (and consequently for $\rho(B)$), and has cardinality less or equal to $\be$. Choosing an $\eta \in H$ that is orthogonal to $[C\xi]$ we can form the subspace $[C \eta]$ that is orthogonal to $[C\xi]$ and so on.

We define a partial order in $\sS$ by the rule $(J, \{K_j\}_{j \in J}) \leq (J', \{K_{j'}\}_{j' \in J'})$, if $J \subseteq J'$ and $K_{j'} = K_{j}$ when $j'=j \in J$. If $\sI$ is a chain of such pairs then $(I, \{K_k\}_{k\in I})$,  with $I = \cup_{l \in \sI} J_l$, and $K_k = K_{j_l}$, when $k=j_l \in J_l$, is a maximal element of $\sI$ in $\sS$. Applying Zorn's Lemma we obtain a maximal element in $\sS$, say $(J,\{K_j\}_{j \in J})$.

If there is a non-zero $\xi \in H$ such that $\xi \perp K$, then $[C\xi]$ is reducing for $\rho(B)$ and has dimension less or equal that $\beta$, which leads to a contradiction. Thus $H = \oplus_{j \in J} K_j$. Therefore $\rho = \oplus_{j\in J} \rho_j$, where $\rho_j = \rho|_{K_j}$. But every $\rho_j$ is in $\sF$, hence lifts to a completely contractive representation of $\sB$, say $\widetilde{\rho_j}$. Thus $\widetilde{\rho} = \oplus_{j \in J} \widetilde{\rho_j}$ is a completely contractive representation of $\sB$, which is a lifting of $\rho$.

Similarly one can start with a family $\sF'$ of contractive representations of $B$. Starting with representations in $\sF'$ that act on Hilbert space with the upper bound on the dimension, one can construct an operator algebra $\fA(B,\sF')$ that has the universal property for representations in $\sF'$ (acting on Hilbert spaces of arbitrary dimension). We will refer to $\fA(B,\sF')$ as \emph{the universal operator algebra relative to $\sF'$}.

\subsection{Radical and Direct Limits}

Given a dynamical system $(A,\al)$, let the \emph{radical ideal} $\sR_\al$ be the closure of $\cup_n \ker\al^n$ \cite{Pet84}. Since  $x\in \sR_\al$ if and only if $\lim_n \al^n(x)=0$, an injective $*$-homomorphism is defined by
\begin{align*}
\dot\al: A/ \sR_\al \rightarrow A/\sR_\al: x + \sR_\al \mapsto
\al(x) + \sR_\al.
\end{align*}
The \emph{direct limit dynamical system $(A_\infty, \al_\infty)$ associated to $(A,\al)$} \cite{Sta93} is
\begin{align*}
 \xymatrix{
  A \ar[r]^{\al} \ar[d]^\al &
  A \ar[r]^{\al} \ar[d]^\al &
  A \ar[r]^{\al} \ar[d]^\al &
  \cdots \ar[r] &
  A_\infty \ar[d]^{\al_\infty} \\
  A \ar[r]^\al &
  A \ar[r]^\al &
  A \ar[r]^\al &
  \cdots \ar[r] &
  A_\infty
 }
\end{align*}
If $(A,\al)$ is unital, then $A_\infty \neq (0)$. It may be the case that $A_\infty$ is trivial, but $\al_\infty$ is always an automorphism of $A_\infty$. The image of $A$ in $A_\infty$ is $A/\sR_\al$, thus $(A_\infty, \al_\infty)$ coincides with the extension $\left( (A/\sR_\al)_\infty, (\dot{\al})_\infty \right)$ of $(A/\sR_\al, \dot\al)$.

\subsection{Exel systems}\label{Ss:transfer}

An \emph{Exel system} $(A,\al,L)$ consists of a dynamical system $(A,\al)$ with a \emph{transfer operator} $L\colon A \rightarrow A$, i.e., $L$ is a continuous positive linear map such that $L(\al(a)b)=aL(b)$, for all $a,b \in A$.

By definition, the range of $L$ is an ideal of $A$. By \cite[Proposition 2.3]{Exe03} $L$ is called \emph{non-degenerate} if one of the following equivalent conditions holds:
\begin{enumerate}
\item the mapping $\al\circ L$ is a conditional expectation onto $\al(A)$;
\item $\al\circ L \circ \al = \al$;
\item $\al(L(1))=\al(1)$.
\end{enumerate}
In that case $A= \ker\al \oplus \Im L$, as an orthogonal sum of ideals. When $L(1)=1$ then $L$ is non-degenerate and onto $A$. Thus $\al$ is injective.

\subsection{$\ca$-correspondences}

We use \cite{Kats04, Lan95} as a general reference. A \emph{$\ca$-correspondence $X$ over $A$} is a right Hilbert $A$-module together with a $*$-homomorphism $\phi_X\colon A \rightarrow \sL(X)$. A (\emph{Toeplitz}) \emph{representation} $(\pi,t)$ of $X$ consists of a $*$-homomorphism $\pi\colon A \rightarrow \sB(H)$ and a linear map $t\colon X \rightarrow \sB(H)$, such that $\pi(a)t(\xi)=t(\phi_X(a)(\xi))$ and $t(\xi)^*t(\eta)=\pi(\sca{\xi,\eta}_X)$, for all $a\in A$ and $\xi,\eta\in X$; by the $\ca$-identity we get also that $t(\xi)\pi(a)=t(\xi a)$. A representation $(\pi, t)$ is said to be \textit{injective} if $\pi$ is injective; then $t$ is an isometry.
The $\ca$-algebra generated by $(\pi,t)$ is the closed linear span of $t(\xi_1)\cdots t(\xi_n)t(\eta_m)^*\cdots t(\eta_1)^*$. Every pair $(\pi,t)$ defines a $*$-homomorphism $\psi_t\colon \sK(X)\rightarrow B$, such that $\psi_t(\Theta^X_{\xi,\eta})= t(\xi)t(\eta)^*$ \cite[Lemma 2.2]{KajPinWat98}.

Let $K$ be an ideal in $\phi_X^{-1}(\sK(X))$; we say that $(\pi,t)$ is \emph{$K$-coisometric} if $\psi_t(\phi_X(a))=\pi(a)$, for $a\in K$. Following \cite{Kats04}, the representations $(\pi,t)$ that are $J_{X}$-coisometric, where $J_X=\ker\phi_X^\bot \cap \phi_X^{-1}(\sK(X))$, are called \emph{covariant representations}. The ideal $J_X$ is the largest ideal on which the restriction of $\phi_X$ takes values in the compacts and is injective. When $\phi_X|_{J_X}$ induces an isomorphism onto $\sK(X)$ then $X$ is (called) \emph{a Hilbert bimodule} \cite{Kats03}.

The \emph{Toeplitz-Cuntz-Pimsner} algebra $\sT_X$ is the universal $\ca$-algebra for ``all'' representations of $X$, and the \emph{Cuntz-Pimsner algebra} $\sO_X$ is the universal $\ca$-algebra for ``all'' covariant representations of $X$. The \emph{tensor algebra} $\sT_{X}^+$ is the norm-closed algebra generated by the copies of $A$ and $X$ in $\sT_X$. For more details see \cite{FowRae99, Kats04, MS}.
There is an important connection between $\sT_X$ and $\sO_X$ established in full generality by Katsoulis and Kribs \cite[Theorem 3.7]{KatsKribs06}: the $\ca$-envelope of $\sT^+_X$ is $\sO_X$. As a consequence, the ideal $\sK(F(X)J_X)$ is the \v{S}ilov ideal of $\sT_X^+$.

For an ideal $K \subseteq \phi_X^{-1}(\sK(X))$ we denote by $\sO(K,X)$ the universal $\ca$-algebra for ``all'' $K$-coisometric representations of $X$. It is easy to deduce that $\sO(K,X) \simeq \sT_X/ \sI$, where $\sI$ is the ideal in $\sT_X$ generated by $\pi_u(a) - \psi_{t_u}(k)$, with $a\in K$, $\phi_X(a)=k \in \sK(X)$, and $(\pi_u,t_u)$ is the universal representation of $\sT_X$. The ideal $K \subseteq \phi^{-1}(\sK(X))$ may not be contained in $J_X$. Nevertheless, there are necessary and sufficient conditions that guarantee this. Note that $\sO_X$ is the minimal such $\ca$-algebra containing $A$.

\begin{lemma}\label{L:lemma J}
Let $X$ be a correspondence over $A$ and $K$ an ideal of $A$ contained in $\phi^{-1}(\sK(X))$. Then the following are equivalent:
\begin{enumerate}
\item $A \hookrightarrow \sO(K,X)$;
\item $\phi_X|_K$ is injective;
\item $K \subseteq J_X$;
\item every $J_X$-covariant representation is $K$-covariant;
\item $\sT_X^+ \hookrightarrow \sO(K,X)$.
\end{enumerate}
\end{lemma}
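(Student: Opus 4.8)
The plan is to prove the cycle $(2)\Rightarrow(3)\Rightarrow(4)\Rightarrow(5)\Rightarrow(1)\Rightarrow(2)$, using the description $\sO(K,X)\simeq\sT_X/\sI$ recorded just before the statement, together with the Katsoulis--Kribs result that $\cenv(\sT_X^+)=\sO_X$ and that the \v{S}ilov ideal of $\sT_X^+$ is $\sK(F(X)J_X)$.

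For $(2)\Rightarrow(3)$: recall $J_X=\ker\phi_X^\bot\cap\phi_X^{-1}(\sK(X))$ is, as noted in the excerpt, the largest ideal on which $\phi_X$ is both compact-valued and injective. Since $K\subseteq\phi_X^{-1}(\sK(X))$ by hypothesis, and $\phi_X|_K$ is injective means $K\cap\ker\phi_X=(0)$, hence $K\subseteq\ker\phi_X^\bot$ (in a $\ca$-algebra an ideal meeting another ideal trivially lies in its annihilator); so $K\subseteq J_X$ by maximality. The implication $(3)\Rightarrow(4)$ is immediate: if $K\subseteq J_X$ then the defining coisometry condition $\psi_t(\phi_X(a))=\pi(a)$ for $a\in J_X$ specializes to $a\in K$, so every $J_X$-covariant (i.e.\ covariant) representation is $K$-covariant. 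For $(4)\Rightarrow(5)$: if every covariant representation is $K$-covariant, then the universal $K$-coisometric representation factors through every covariant representation, and in particular the canonical map $\sT_X\to\sO_X$ factors through $\sO(K,X)$; restricting to the tensor algebra, the composite $\sT_X^+\to\sO(K,X)\to\sO_X$ is the inclusion $\sT_X^+\hookrightarrow\cenv(\sT_X^+)$, which is a (complete) isometry, hence $\sT_X^+\to\sO(K,X)$ is isometric, giving $(5)$. The implication $(5)\Rightarrow(1)$ is trivial since $A\subseteq\sT_X^+$ and the embedding of $A$ in $\sT_X$ (hence in $\sT_X^+$) is isometric.

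The remaining implication $(1)\Rightarrow(2)$ is the one I expect to be the main obstacle, since it must produce injectivity of $\phi_X|_K$ from mere injectivity of $A\hookrightarrow\sO(K,X)$. The route is the contrapositive: suppose $\phi_X|_K$ is not injective, so there is a nonzero positive $a\in K$ with $\phi_X(a)=0$. Using $\sO(K,X)=\sT_X/\sI$ with $\sI$ generated by the elements $\pi_u(b)-\psi_{t_u}(\phi_X(b))$, $b\in K$, observe that for this particular $a$ we get $\psi_{t_u}(\phi_X(a))=\psi_{t_u}(0)=0$, so $\pi_u(a)\in\sI$, i.e.\ $a$ maps to $0$ in $\sO(K,X)$, contradicting $A\hookrightarrow\sO(K,X)$. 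Thus $(1)\Rightarrow(2)$.

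The delicate point hidden in the last paragraph is making sure that ``$a$ maps to $0$ in $\sO(K,X)$'' really does contradict $(1)$ — i.e.\ that the composite $A\to\sT_X\to\sO(K,X)$ is the map referred to in $(1)$ — which is clear from the construction, and that we have correctly identified the generators of $\sI$: we only need that $\pi_u(a)-\psi_{t_u}(k)\in\sI$ when $\phi_X(a)=k$, which is exactly how $\sI$ was defined. I would present the six implications in the order above, flag $(2)\Rightarrow(3)$ and $(1)\Rightarrow(2)$ as the two with genuine content, and leave the rest as short remarks; alternatively one can invoke Katsoulis' gauge-invariant uniqueness machinery for $(1)\Rightarrow(2)$, but the elementary argument via $\sT_X/\sI$ is cleaner here.
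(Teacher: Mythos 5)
Your proof is correct, and four of your five implications --- $(2)\Rightarrow(3)$, $(3)\Rightarrow(4)$, $(4)\Rightarrow(5)$ via the commuting triangle $\sT_X \rightarrow \sO(K,X) \rightarrow \sO_X$ together with the Katsoulis--Kribs theorem, and $(5)\Rightarrow(1)$ --- are exactly the ones in the paper. The only real divergence is how the equivalence of $(1)$ and $(2)$ is handled: the paper simply cites \cite[Lemma 2.2]{BroRae04} for $(1)\Leftrightarrow(2)$, so it never needs the implication $(1)\Rightarrow(2)$ as a separate step, whereas you close a single cycle by proving $(1)\Rightarrow(2)$ directly (the contrapositive: a nonzero $a\in K\cap\ker\phi_X$ has $\pi_u(a)=\pi_u(a)-\psi_{t_u}(\phi_X(a))\in\sI$, so $a$ dies in $\sO(K,X)$) and letting the hard direction $(2)\Rightarrow(1)$ come around the cycle through the $\ca$-envelope result. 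This makes your argument self-contained modulo \cite{KatsKribs06} and avoids the Brownlowe--Raeburn lemma entirely; the paper's version is shorter and records that $(1)\Leftrightarrow(2)$ holds independently of the envelope machinery. Two cosmetic points: in $(2)\Rightarrow(3)$ the conclusion $K\subseteq J_X$ is immediate from the definition $J_X=\ker\phi_X^{\perp}\cap\phi_X^{-1}(\sK(X))$ (no appeal to maximality is needed, though it also works), and in $(4)\Rightarrow(5)$ the phrase ``the universal $K$-coisometric representation factors through every covariant representation'' has the factorization backwards --- what you use, and what your next clause correctly states, is that the universal covariant representation is $K$-covariant, so $\sT_X\rightarrow\sO_X$ factors through $\sO(K,X)$.
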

\begin{proof}
For $[(1) \Leftrightarrow (2)]$ see \cite[Lemma 2.2]{BroRae04}. If item $(2)$ above holds, then $K \subseteq \ker\phi_X^\perp$, hence $K \subseteq J_X$. The implications $[(3) \Rightarrow (4)]$ and $[(5) \Rightarrow (1)]$ are obvious. Finally assume that item $(4)$ holds. Then the diagram
\begin{align*}
\xymatrix{ \sT_X \ar[rr]^{q_1} \ar[dr]^{q_2} & & \sO_X\\
&  \sO(K,X) \ar[ur]^{q_3} &
}
\end{align*}
of canonical $*$-epimorphisms commutes. By \cite{KatsKribs06}, the restriction of $q_1$ to the tensor algebra is a complete isometry, hence $\sT_X^+ \hookrightarrow \sO(K,X)$.
\end{proof}

\section{One Variable Case}\label{S:one}

The reader is referred to \cite{BroRae04, BroRaeVit09, CouMuhSch10, Exe03, HueRae11, Lar10, Sta93} for examples that arise naturally in the context, that we won't repeat. We will write $1_A\equiv 1$.

\subsection{The Crossed Products $\fA(A,\al)$ and $A \times_\al^1\bbN$}\label{Ss:sta-exe}

Exel \cite{Exe03} defines the universal $\ca$-algebra $\fA(A,\al)$ relative to the class
\begin{align*}
\sF_E = \left\{ (\pi,T) \mid T \text{ isometry, and }\pi(\al(x))=T\pi(x)T^*, \foral x\in A\right\},
\end{align*}
that is generated by $\pi(A)$ and $T$. Since $T$ is an isometry, then
\[
T\pi(x)= \pi(\al(x))T, \foral x\in A.
\]
Moreover, $\pi(\al(1))= T\pi(1)T^*=\pi(\al(1))TT^*=TT^*\pi(\al(1))$.

A variation was given earlier by Stacey \cite{Sta93}:
for a dynamical system $(A,\al)$ with $A_\infty \neq (0)$, let $A \times_\al^1 \bbN$ be the universal $\ca$-algebra relative to the class
\begin{align*}
\sF_S = \left\{ (\pi,T) \in \sF_E \mid \pi \text{ is non-degenerate}\right\},
\end{align*}
generated by $\pi(x)T^n (T^*)^m$ for $n,m \in \bbZ_+$. Since $\pi$ is non-degenerate the $\ca$-algebra $A\times_\al^1 \bbN$ is generated by $\pi(A)$ and $T$.

Non-degeneracy of $A$ is not assumed in the definition of $\fA(A,\al)$, otherwise $\fA(A,\al)$ would coincide with $A \times_\al ^1 \bbN$. Thus $1\in A$ is a projection in $\fA(A,\al)$. The connection between $\fA(A,\al)$ and $A\times_\al^1 \bbN$ is established below.

\begin{proposition}\label{P:sta-exe}
The $\ca$-algebra $A\times_\al^1 \bbN$ is $*$-isomorphic to the $\ca$- subalgebra of $\fA(A,\al)$ generated by $\pi_u(A)$ and $T_u\pi_u(1)$, where $(\pi_u,T_u)$ is the universal representation of $\fA(A,\al)$. Moreover, $\pi_u(1)$ is a unit for $A\times_\al^1 \bbN$.
\end{proposition}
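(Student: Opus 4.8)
The plan is to exhibit $A\times_\al^1\bbN$ as the universal object for a class of representations, and then to identify that class with a corner of the representations of $\fA(A,\al)$. Write $p := \pi_u(1)$, which is a projection in $\fA(A,\al)$ by the remarks preceding the statement (since $\al$ need not be unital, $\pi_u(1)$ is genuinely a projection and not the identity). Let $\sC$ be the $\ca$-subalgebra of $\fA(A,\al)$ generated by $\pi_u(A)$ and $T_u p$. First I would check that $\pi_u(1)$ is a unit for $\sC$: it acts as a unit on $\pi_u(A)$ by construction, and $p(T_up)=(T_u p)$ is immediate, while $(T_up)p=T_u p$ as well; since $\sC$ is generated by $\pi_u(A)$ and $T_up$ together with their adjoints and products, and $p$ commutes with all generators in the required one-sided fashion, a short computation with monomials $\pi_u(x)(T_up)^n((T_up)^*)^m = \pi_u(x)T_u^n p (T_u^*)^m$ (using $T_up=pT_up$ from $\pi_u(\al(1))=pT_uT_u^*$, $T_u^*p=T_u^*$? — one must be slightly careful here and instead use that $T_u^* \pi_u(1) = T_u^*\pi_u(1)T_uT_u^* = \pi_u(\al(1))T_u^* $, so the relevant one-sided absorptions hold) shows $p$ is a two-sided unit for $\sC$.

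Next I would produce the $*$-isomorphism. Set $S := T_u p \in \sC$. Then $S$ is an isometry relative to the unit $p$ of $\sC$: $S^*S = pT_u^*T_up = p\cdot 1\cdot p = p$. Moreover $(\pi_u|_A, S)$ satisfies the covariance relation inside $\sC$: $S\pi_u(x)S^* = T_up\pi_u(x)pT_u^* = T_u\pi_u(x)T_u^* = \pi_u(\al(x))$ for all $x\in A$, and $\pi_u|_A$ is non-degenerate into $\sC$ because $p=\pi_u(1)$ is the unit of $\sC$. Hence $(\pi_u|_A, S)$ is a member of the class $\sF_S$ (realized inside $\sC \subseteq \sB(H)$ for the Hilbert space $H$ of the universal representation), and since $\sC$ is generated by $\pi_u(A)$ and $S$, the universal property of $A\times_\al^1\bbN$ yields a surjective $*$-homomorphism $\Phi : A\times_\al^1\bbN \to \sC$ sending the universal generators to $\pi_u(A)$ and $S$. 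For injectivity I would build a one-sided inverse: given the universal non-degenerate pair $(\rho, V)$ generating $A\times_\al^1\bbN$, I claim there is a $*$-homomorphism $\fA(A,\al)\to M(A\times_\al^1\bbN)$ (or into $A\times_\al^1\bbN$ itself) built from $(\rho,V)$; the point is that $(\rho, V)$, while non-degenerate, is still an $\sF_E$-representation, so the universal property of $\fA(A,\al)$ gives $\Psi:\fA(A,\al)\to \ca(\rho(A),V)=A\times_\al^1\bbN$ with $\Psi(\pi_u(x))=\rho(x)$ and $\Psi(T_u)=V$. Then $\Psi(p)=\rho(1)=1_{A\times_\al^1\bbN}$ and $\Psi(S)=\Psi(T_up)=V\cdot 1 = V$, so $\Psi|_\sC$ is a $*$-homomorphism $\sC\to A\times_\al^1\bbN$ with $\Psi\circ\Phi=\id$ on generators, hence $\Phi$ is injective.

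The main obstacle I anticipate is the bookkeeping around non-unitality: one must be careful that ``$\pi$ non-degenerate'' for the class $\sF_S$ is being interpreted correctly when $\pi$ maps into the corner $p\fA(A,\al)p$ rather than into all of $\fA(A,\al)$, and that $S=T_up$ (not $T_u$ itself) is the correct generator — indeed $T_u$ need not lie in $\sC$ since $\sC$ is unital with unit $p\ne 1_{\fA(A,\al)}$ in general, while $\{s_\mu\}$-type monomials only ever involve $T_u$ flanked by $\pi_u(A)$, so they automatically land in the corner. The other point requiring care is verifying that the list of monomials $\pi_u(x)T_u^n(T_u^*)^m$ ($x\in A$, $n,m\ge 0$) already has dense span in $\sC$ and is closed under multiplication and adjoints modulo the covariance relation, so that $\sC$ really is the corner $p\,\fA(A,\al)\,p$; this is the computation that makes ``non-degeneracy inside $\sC$'' hold and is where I would spend the most effort, though it is essentially the standard monomial normal-form argument for these generalized crossed products.
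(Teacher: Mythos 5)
Your proposal is correct and follows essentially the same route as the paper: show $\pi_u(1)$ is a unit for $\sC=\ca(\pi_u(A),T_u\pi_u(1))$, pass to the compression to $K=\pi_u(1)H_u$ so that $(\pi_u|_K,\,T_u\pi_u(1)|_K)$ is an honest $\sF_S$-pair and universality of $A\times_\al^1\bbN$ yields $\Phi$, then use that every $\sF_S$-pair is an $\sF_E$-pair to get $\Psi\colon\fA(A,\al)\to A\times_\al^1\bbN$ with $\Psi(T_u\pi_u(1))=T\pi(1)=T$, whence $\Psi\circ\Phi=\id$ and $\Phi$ is injective. The only blemishes are inessential: the side identities you floated while checking the unit property (e.g.\ $\pi_u(\al(1))=\pi_u(1)T_uT_u^*$ and $T_u^*\pi_u(1)=T_u^*\pi_u(1)T_uT_u^*$) are not valid in general, but they are not needed, since $\pi_u(1)T_u\pi_u(1)=\pi_u(\al(1))T_u=T_u\pi_u(1)$ follows in one line from the covariance relation, and likewise the monomial normal-form/corner analysis you anticipated spending effort on is unnecessary once one compresses to $K$.
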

\begin{proof}
First note that $\pi_u(1)$ is a unit for $\ca(\pi_u(A), T_u\pi_u(1))$, since
\[
\pi_u(1)\cdot T_u \pi_u(1) = \pi_u(1) \pi_u(\al(1)) T_u = \pi_u(\al(1)) T_u = T_u \pi_u(1).
\]
Hence the restriction of the identity representation on $K=\pi_u(1)H_u$ defines a faithful representation of $\ca(\pi_u(A),T_u\pi_u(1))$. Moreover $\pi_u|_K$ is non-degenerate for $A$ and $T_u\pi_u(1)|_K$ is an isometry.
Therefore there is a canonical $*$-epimorphism $\Phi \colon A \times_\al^1 \bbN \rightarrow \ca(\pi_u(A), T_u\pi_u(1))$.

On the other hand, let $(\pi,T)$ be a pair for $A\times_\al^1 \bbN$, where $\pi$ is non-degenerate and $T$ is an isometry. Then $(\pi, T)$ defines a pair also for $\fA(A,\al)$. Thus there is a canonical $*$-epimorphism $\Psi \colon \fA(A,\al) \rightarrow A \times_\al^1 \bbN$.
Therefore $\Psi(T_u \pi_u(1))= T \pi(1) = T$, since $\pi$ is assumed non-degenerate. Thus the restriction of $\Psi$ to $\ca(\pi_u(A), T_u\pi_u(1))$ is still onto $A\times_\al ^1 \bbN$.

It is straightforward that $\Psi \circ \Phi = \id_{A \times_\al^1 \bbN}$, which completes the proof.
\end{proof}

By the following Proposition, we can restrict to injective dynamical systems. Recall that $\dot\al(x+ \sR_\al)= \al(x) + \sR_\al$.

\begin{proposition}\label{P:R_al for fA}
The $\ca$-algebras $\fA(A,\al)$ and $\fA(A/\sR_\al,\dot\al)$ are $*$- isomorphic; analogously, $A \times_\al ^1 \bbN \simeq (A/\sR_\al) \times_{\dot\al}^1 \bbN$.
Moreover, $A/\sR_\al$ embeds in $(A/\sR_\al) \times_{\dot\al}^1 \bbN$ and $\fA(A/\sR_\al,\dot \al)$.
\end{proposition}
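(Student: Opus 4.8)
The plan is to match, bijectively, the covariant pairs for $(A,\al)$ with those for $(A/\sR_\al,\dot\al)$, read off the two isomorphisms from the respective universal properties, and then exhibit a single faithful covariant pair to obtain the embedding assertions (for which I will, as the statement implicitly requires, assume $A_\infty\neq(0)$ whenever a Stacey crossed product is involved).

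First I would record the crucial point: every pair $(\pi,T)\in\sF_E$ for $(A,\al)$ annihilates $\sR_\al$. Iterating the covariance relation gives $\pi(\al^k(x))=T^k\pi(x)(T^*)^k$, so for $x\in\ker\al^k$ we get $\pi(x)=(T^*)^k\pi(\al^k(x))T^k=0$ because $T^k$ is an isometry; by continuity $\pi|_{\sR_\al}=0$. Hence $\pi=\dot\pi\circ q$ for a $*$-homomorphism $\dot\pi$ of $A/\sR_\al$, where $q\colon A\to A/\sR_\al$ is the quotient map, and since $\dot\al\circ q=q\circ\al$ with $q$ onto, $(\dot\pi,T)\in\sF_E$ for $(A/\sR_\al,\dot\al)$. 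Conversely, composing any pair in $\sF_E$ for $(A/\sR_\al,\dot\al)$ with $q$ lands back in $\sF_E$ for $(A,\al)$, and the two assignments are mutually inverse. Surjectivity of $q$ shows $\pi$ is non-degenerate if and only if $\dot\pi$ is, so the correspondence restricts to $\sF_S\leftrightarrow\sF_S$; since $A_\infty=(A/\sR_\al)_\infty$, as recalled before the statement, the relevant crossed products are simultaneously defined.

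Next I would transport this through the universal properties. Let $(\pi_u,T_u)$ be the universal pair of $\fA(A,\al)$ and $(\si_u,S_u)$ that of $\fA(A/\sR_\al,\dot\al)$. The pair $(\dot\pi_u,T_u)\in\sF_E$ for $(A/\sR_\al,\dot\al)$ induces a canonical $*$-epimorphism $\Phi\colon\fA(A/\sR_\al,\dot\al)\to\fA(A,\al)$ with $\Phi(\si_u(q(x)))=\pi_u(x)$ and $\Phi(S_u)=T_u$, while $(\si_u\circ q,S_u)\in\sF_E$ for $(A,\al)$ induces a canonical $*$-epimorphism $\Psi$ in the opposite direction with $\Psi(\pi_u(x))=\si_u(q(x))$ and $\Psi(T_u)=S_u$. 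Both $\Psi\circ\Phi$ and $\Phi\circ\Psi$ fix the respective generating sets (using that $q$ is onto), hence equal the identity, so $\Phi,\Psi$ are mutually inverse $*$-isomorphisms and $\fA(A,\al)\simeq\fA(A/\sR_\al,\dot\al)$. Repeating the argument with $\sF_E$ replaced by $\sF_S$ gives $A\times_\al^1\bbN\simeq(A/\sR_\al)\times_{\dot\al}^1\bbN$.

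For the embeddings it suffices to produce one faithful covariant pair. Writing $j_k\colon A\to A_\infty$ for the canonical maps into the direct limit, one has $\al_\infty\circ j_0=j_0\circ\al$ by the construction of $\al_\infty$, and $\ker j_0=\sR_\al$ since the image of $A$ in $A_\infty$ is $A/\sR_\al$. Represent $A_\infty\rtimes_{\al_\infty}\bbZ$ faithfully on a Hilbert space, let $\iota$ be the (injective) embedding of $A_\infty$ and $U$ a unitary with $U\iota(b)U^*=\iota(\al_\infty(b))$ for all $b\in A_\infty$; then $U\iota(j_0(x))U^*=\iota(j_0(\al(x)))$, so $(\iota\circ j_0,\,U)\in\sF_E$ for $(A,\al)$ and its representation of $A$ has kernel exactly $\sR_\al$. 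The resulting $*$-homomorphism out of $\fA(A,\al)$ sends $\pi_u(x)$ to $\iota(j_0(x))$, which forces $\ker\pi_u=\sR_\al$, so the canonical map $A/\sR_\al\to\fA(A,\al)$ is injective; combined with the previous paragraph this yields $A/\sR_\al\hookrightarrow\fA(A/\sR_\al,\dot\al)$. Finally, by Proposition \ref{P:sta-exe} the copy of $A$ inside $A\times_\al^1\bbN$ is exactly $\pi_u(A)$, so the same pair gives $A/\sR_\al\hookrightarrow(A/\sR_\al)\times_{\dot\al}^1\bbN$. The work here is essentially bookkeeping; I expect the only mild annoyances to be keeping the several universal pairs straight and disposing of the degenerate case $A_\infty=(0)$ (equivalently $A/\sR_\al=(0)$), in which the crossed-product claims are vacuous and $\fA(A,\al)\simeq\fA(A/\sR_\al,\dot\al)$ reduces to the universal $\ca$-algebra on a single isometry. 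There is no substantive obstacle.
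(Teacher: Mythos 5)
Your argument is correct and follows essentially the same route as the paper: you show every covariant pair annihilates $\sR_\al$ (covariance plus isometry, then continuity), so the representation classes for $(A,\al)$ and $(A/\sR_\al,\dot\al)$ match up and universality gives both isomorphisms, and you obtain the embeddings from a faithful covariant pair coming from $A_\infty\rtimes_{\al_\infty}\bbZ$, exactly as in the paper's factorization $A\to A\times_\al^1\bbN\subseteq\fA(A,\al)\to A_\infty\rtimes_{\al_\infty}\bbZ$. The only cosmetic difference is that you work directly with $j_0\colon A\to A_\infty$ and $\ker j_0=\sR_\al$ instead of first reducing to the injective case, which changes nothing of substance.
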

\begin{proof}
The first part goes as in the proof of \cite[Proposition 3.14]{Kak11}; in short, if $x\in \ker\al^n$, then $T^n\pi(x)=\pi(\al^n(x))T^n=0$, hence $\pi(x)=0$ since $T$ is an isometry. A limit argument then shows that $\pi|_{\sR_\al}=0$; thus $(\pi\circ q_{\sR_\al},T)$ is a pair for $\fA(A,\al)$ if an only if $(\pi,T)$ is a pair for $\fA(A/\sR_\al,\dot\al)$.

For the second part, it suffices to prove it in the case where $\al$ is injective. To this end let $(\pi,U)$ be a representation of the usual crossed product $A_\infty \rtimes_{\al_\infty} \bbZ$. Then $(\pi,U)$ defines a pair for $\fA(A,\al)$. Hence there is a canonical $*$-epimorphism $\fA(A,\al) \rightarrow A_\infty \rtimes_{\al_\infty} \bbZ$, that fixes $A$. Thus we obtain a canonical factorization of $A \hookrightarrow A_\infty \hookrightarrow A_\infty \rtimes_{\al_\infty} \bbZ$ by
\[
A \rightarrow A \times_\al ^1 \bbN \subseteq  \fA(A,\al) \rightarrow A_\infty \rtimes_{\al_\infty} \bbZ.
\]
Therefore $A$ embeds in $A \times_\al ^1 \bbN$.
\end{proof}

\begin{proposition}\label{P:criterion}
Let $(A,\al)$ be a dynamical system with $A$ unital and $\al$ injective. Then the following are equivalent
\begin{enumerate}
\item $\al(1)=1$;
\item $A \times_\al^1 \bbN \simeq \fA(A,\al)$;
\item $1\in A$ is a unit for $\fA(A,\al)$.
\end{enumerate}
\end{proposition}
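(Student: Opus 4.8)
The plan is to prove the cycle of implications $(1)\Rightarrow(3)\Rightarrow(2)\Rightarrow(1)$, leaning on the already-established Proposition~\ref{P:sta-exe}. Recall from the discussion preceding the statement that for the universal pair $(\pi_u,T_u)$ of $\fA(A,\al)$ we always have $\pi_u(\al(1)) = T_u\pi_u(1)T_u^* = T_uT_u^*\pi_u(\al(1))$, and from the proof of Proposition~\ref{P:sta-exe} that $\pi_u(1)$ is a unit for the $\ca$-subalgebra $\ca(\pi_u(A),T_u\pi_u(1)) \cong A\times_\al^1\bbN$.

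For $(1)\Rightarrow(3)$: assume $\al(1)=1$. Then $\pi_u(\al(1)) = \pi_u(1)$, so the identity $\pi_u(\al(1)) = T_uT_u^*\pi_u(\al(1))$ becomes $\pi_u(1) = T_uT_u^*\pi_u(1) = \pi_u(1)T_uT_u^*$, which says $T_uT_u^* \geq \pi_u(1)$ as projections, but also $T_uT_u^* = T_u\pi_u(1)T_u^* \leq T_uT_u^*$ forces $T_uT_u^* = T_u\pi_u(1)T_u^*$... more directly, $T_u = T_u\pi_u(1)$ since $\pi_u(1)$ already multiplies $T_u\pi_u(1)$ trivially and $\pi_u(\al(1))T_u = T_u$; indeed $T_u = \pi_u(\al(1))T_u = \pi_u(1)T_u$ and on the other side $T_u\pi_u(1)$ is a generator. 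One then checks $\pi_u(1)$ is a unit for all generators $\pi_u(x)T_u^n(T_u^*)^m$ of $\fA(A,\al)$: it commutes with and fixes $\pi_u(x)$ (using non-degeneracy is not available here, but $\pi_u(1)\pi_u(x)=\pi_u(x)$ holds trivially as $1$ is a unit of $A$), and $\pi_u(1)T_u = \pi_u(\al(1))T_u = \pi_u(1)T_u = T_u$ while $T_u\pi_u(1) = T_u$ by the computation in Proposition~\ref{P:sta-exe}. Hence $\pi_u(1)$ is a unit for $\fA(A,\al)$, i.e. $1\in A$ is a unit there.

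For $(3)\Rightarrow(2)$: if $1\in A$ is a unit for $\fA(A,\al)$, then in particular $T_u\pi_u(1) = T_u$, so the $\ca$-subalgebra $\ca(\pi_u(A),T_u\pi_u(1))$ of Proposition~\ref{P:sta-exe} is all of $\ca(\pi_u(A),T_u) = \fA(A,\al)$; combined with Proposition~\ref{P:sta-exe} this gives $A\times_\al^1\bbN \simeq \fA(A,\al)$ canonically. For $(2)\Rightarrow(1)$: by Proposition~\ref{P:sta-exe}, $\pi_u(1)$ is a unit for $A\times_\al^1\bbN$, so if the canonical isomorphism $A\times_\al^1\bbN \simeq \fA(A,\al)$ holds then $\pi_u(1)$ must be a unit for $\fA(A,\al)$; applying this unit to the generator $T_u$ gives $T_u = T_u\pi_u(1)$ — but we always have $T_u^*T_u = \pi_u(1_A) = \pi_u(1)$, no wait, $T_u^*T_u = \pi_u(\sca{\cdot})$; rather, from $\pi_u(1)T_u = T_u$ and $\pi_u(\al(1))T_u = T_u$ together with injectivity of $\al$ one concludes $\pi_u(\al(1)) = \pi_u(1)$ after multiplying by $T_u^*$ on the right and using that $\al$ injective forces $\pi_u$ injective (via Proposition~\ref{P:R_al for fA}, since $\sR_\al = (0)$ and $A\hookrightarrow A\times_\al^1\bbN \subseteq \fA(A,\al)$), whence $\al(1)=1$.

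The main obstacle I expect is the direction $(2)\Rightarrow(1)$, specifically extracting $\al(1)=1$ cleanly: one needs that $\pi_u$ is injective on $A$ inside $\fA(A,\al)$ (available from Proposition~\ref{P:R_al for fA} using injectivity of $\al$) and then a careful manipulation of the relation $T_u\pi_u(x) = \pi_u(\al(x))T_u$ at $x=1$ together with the hypothesis that $\pi_u(1)$ acts as a unit — the subtlety is that ``$1$ is a unit'' only directly gives $\pi_u(1)T_u = T_u = T_u\pi_u(1)$, and one must leverage the isometry property $T_u^*T_u = \pi_u(1)$ to pass from $T_u\pi_u(1)T_u^* = \pi_u(\al(1))$ to $\pi_u(\al(1)) = \pi_u(1)$, then invoke injectivity of $\pi_u$. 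The other implications are essentially bookkeeping on top of Proposition~\ref{P:sta-exe}.
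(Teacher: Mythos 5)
Your overall plan (a cycle of implications built on Proposition~\ref{P:sta-exe}) is reasonable, and your step $(3)\Rightarrow(2)$ is fine: under $(3)$ one has $T_u\pi_u(1)=T_u$, so the subalgebra $\ca(\pi_u(A),T_u\pi_u(1))\simeq A\times_\al^1\bbN$ of Proposition~\ref{P:sta-exe} is all of $\fA(A,\al)$. But the crux of the proposition is extracting $\al(1)=1$ from the unit hypothesis, and there your argument does not close. From $(3)$ you correctly get $\pi_u(1)T_u=T_u$ and, via covariance, $\pi_u(\al(1))T_u=T_u\pi_u(1)=T_u$; however, multiplying $\pi_u(\al(1))T_u=\pi_u(1)T_u$ on the right by $T_u^*$ only yields $\pi_u(\al(1))=\pi_u(1)T_uT_u^*$ (using $\pi_u(\al(1))T_uT_u^*=\pi_u(\al(1))$), and since $T_u$ is merely an isometry the range projection $T_uT_u^*$ cannot be deleted to reach $\pi_u(\al(1))=\pi_u(1)$. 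The auxiliary relation you lean on, $T_u^*T_u=\pi_u(1)$, is false in this setting: the defining relation is $T_u^*T_u=I$. Moreover, no formal manipulation of the relations you use (covariance, $T$ an isometry, $\pi$ faithful, $\pi(1)$ a unit for $\ca(\pi(A),T)$) can yield $\pi(\al(1))=\pi(1)$: for $A$ the Toeplitz algebra $\ca(S)$ and the injective endomorphism $\al=\ad_S$, the single pair $(\id,S)$ satisfies all of these while $\al(1)=SS^*\neq 1$. What is genuinely needed, and what the paper's appeal to faithfulness of $\pi_u$ (via Proposition~\ref{P:R_al for fA}) is really resting on, is a covariant pair in which the isometry is a \emph{unitary} and $A$ is represented faithfully --- e.g.\ $(\Pi|_A,U)$ coming from a faithful representation of $A_\infty\rtimes_{\al_\infty}\bbZ$, available since $\sR_\al=(0)$. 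For such a pair the unit hypothesis gives $\Pi(1)U=U\Pi(1)$, hence $\Pi(\al(1))=U\Pi(1)U^*=\Pi(1)$ because $U$ is unitary, and faithfulness on $A$ gives $\al(1)=1$. Your proposal never produces such a pair, so the direction $(2)\Rightarrow(1)$ (equivalently $(3)\Rightarrow(1)$) is left with a genuine gap.

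The implication $(1)\Rightarrow(3)$ is also not proved as written. From $\al(1)=1$ and covariance one gets $\pi_u(1)=T_u\pi_u(1)T_u^*$, hence only the commutation $\pi_u(1)T_u=T_u\pi_u(1)$; your asserted identities $\pi_u(\al(1))T_u=T_u$ and $T_u\pi_u(1)=T_u$ do not follow from this (Proposition~\ref{P:sta-exe} gives $\pi_u(1)\cdot T_u\pi_u(1)=T_u\pi_u(1)$, not $T_u\pi_u(1)=T_u$), and whether $\pi_u(1)$ absorbs $T_u$ is exactly the degeneracy issue that the paper's proof of $(1)\Rightarrow(2)$ disposes of by passing to a non-degenerate $\pi_u$; as written you are assuming what is to be shown. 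In short: the bookkeeping implications match the paper, but the two implications that carry the content --- that $\al(1)=1$ makes $\pi_u(1)$ act as a unit, and that the unit hypothesis forces $\al(1)=1$ --- are argued by invalid cancellations, the second of which must instead go through the unitary covariant pairs (or, equivalently, through the paper's faithfulness argument for $\pi_u$).
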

\begin{proof}
Let $(\pi_u,T_u)$ be the universal representation of $\fA(A,\al)$. If item $(1)$ above holds then $\pi_u(1)$ is a unit for $\fA(A,\al)$, hence $\pi_u$ can be assumed non-degenerate and item $(2)$ is implied.
It is obvious that item $(2)$ implies $(3)$.
To end the proof, assume that item $(3)$ holds. Then for $T_u \in \fA(A,\al)$ we obtain $\pi_u(1)T_u=T_u \pi_u(1)$. Since $T_u$ is an isometry, then
\begin{align*}
\pi_u(\al(1)) & =\pi_u(\al(1)) T_u^*T_u = T_u^*\pi_u(1) T_u
 = T_u^* T_u \pi_u(1) = \pi_u(1).
\end{align*}
Since $\al$ is injective, $\pi_u$ is faithful, hence $\al(1)=1$.
\end{proof}

\subsection{The Toeplitz algebra $\sT(A,\al,L) \simeq \sT_{\sM_L}$}\label{Ss:toeplitz Exel}

In his pioneering paper Exel \cite{Exe03} examines operator algebras related to a system $(A,\al,L)$. In his original definition, $\sT(A,\al,L)$ is the universal $\ca$-algebra relative to the class
\begin{align*}
\left\{ (\pi,S) \mid S\pi(x)=\pi(\al(x))S, \, \pi(L(x))=S^*\pi(x)S, \foral x\in A\right\},
\end{align*}
generated by $\pi(A)$ and $S$. A priori $\sT(A,\al,L)$ seems to differ from the universal $\ca$-algebra subject to the same representation theory, but generated by $\pi(A)$ and $\pi(A)S$. These two objects are equivalent as $\pi$ can be chosen to be unital \cite{BroRae04, Lar10}.

The existence of at least one such pair $(\pi,S)$ comes from a representation of the following $\ca$-correspondence. Let $A$ be the semi-$A$-inner product with
\begin{align*}
x\cdot y= x\al(y), \, \sca{x,y}=L(x^*y), \, \foral x,y \in A,
\end{align*}
and let $\sM_L$ be the Hilbert $A$-module associated to it. That is, $\sM_L$ is the completion of the quotient $A/\sN_1$, where $\sN_1= \{ x\in A: L(y^*x)=0, \foral y\in A\}$.
Along with the $*$-homomorphism $\phi_{\sM_L}\colon A \rightarrow \sL(\sM_L)$,
such that $\phi(y)(x+\sN_1)=yx + \sN_1$, the module $\sM_L$ becomes the $\ca$- correspondence over $A$ \emph{associated to $(A,\al,L)$}. By definition, every pair $(\pi,S)$ of $\sT(A,\al,L)$ defines a representation of $\sM_L$.

\begin{lemma}\label{L:quo}
If $\xi \in A$ then $\xi + \sN_1= \xi \al(1) + \sN_1$.
\end{lemma}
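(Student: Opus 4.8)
The plan is to show that $\xi - \xi\al(1) \in \sN_1$, i.e.\ that $L(y^*(\xi - \xi\al(1))) = 0$ for all $y \in A$. Equivalently, it suffices to show $L(y^*\xi) = L(y^*\xi\al(1))$ for all $y$. The natural tool is the transfer operator identity $L(\al(a)b) = aL(b)$, which lets me move $\al(1)$ across $L$. Here, however, $\al(1)$ sits on the right inside $L$, not in the form $\al(\,\cdot\,)$ on the left, so I would first massage the expression. One clean route: by positivity of $L$, the sesquilinear form $(a,b)\mapsto L(a^*b)$ is a semi-inner product, so it suffices to show that $\xi - \xi\al(1)$ has zero ``length'', i.e.\ $L\big((\xi-\xi\al(1))^*(\xi-\xi\al(1))\big)=0$; then Cauchy--Schwarz for this semi-inner product gives $L(y^*(\xi-\xi\al(1)))=0$ for every $y$, which is exactly membership in $\sN_1$.

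So the key computation reduces to $L\big((\xi - \xi\al(1))^*(\xi - \xi\al(1))\big) = 0$. Expanding, $(\xi - \xi\al(1))^*(\xi - \xi\al(1)) = \xi^*\xi - \al(1)^*\xi^*\xi - \xi^*\xi\al(1) + \al(1)^*\xi^*\xi\al(1)$, and since $\al(1) = \al(1^*1) = \al(1)^*$ is a self-adjoint (indeed a projection, as $\al$ is a $*$-homomorphism on the unital algebra $A$, so $\al(1)$ is a projection in $A$), this is $\xi^*\xi - \al(1)\xi^*\xi - \xi^*\xi\al(1) + \al(1)\xi^*\xi\al(1)$. Now I apply $L$. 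For the terms with $\al(1)$ on the left I use the transfer identity in the form $L(\al(1)b) = 1\cdot L(b) = L(b)$. Thus $L(\al(1)\xi^*\xi) = L(\xi^*\xi)$ and $L(\al(1)\xi^*\xi\al(1)) = L(\xi^*\xi\al(1))$. Substituting, $L\big((\xi-\xi\al(1))^*(\xi-\xi\al(1))\big) = L(\xi^*\xi) - L(\xi^*\xi) - L(\xi^*\xi\al(1)) + L(\xi^*\xi\al(1)) = 0$, as desired.

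The main obstacle — really the only subtle point — is justifying the step $L(\al(1)b) = L(b)$: this is immediate from the defining relation $L(\al(a)b) = aL(b)$ applied with $a = 1$, since $A$ is unital, so it is essentially free. The rest is the observation that $\al(1)$ is a self-adjoint element (automatic since $\al$ is a $*$-homomorphism), plus the Cauchy--Schwarz inequality for the positive-semidefinite form $L(\,\cdot^*\cdot\,)$, which holds because $L$ is a positive linear map. I would write this up in a few lines: first note $\al(1)^*=\al(1)$; then do the four-term expansion and apply $L$, using $L(\al(1)\,\cdot\,) = L(\,\cdot\,)$ to collapse it to $0$; then invoke Cauchy--Schwarz to conclude $L(y^*(\xi - \xi\al(1))) = 0$ for all $y$, i.e.\ $\xi - \xi\al(1)\in\sN_1$, which is precisely the claim $\xi + \sN_1 = \xi\al(1) + \sN_1$.
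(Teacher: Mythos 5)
Your proof is correct, but it takes a genuinely different route from the paper's. The paper verifies membership in $\sN_1$ directly: for every $\eta\in A$,
\[
L\big(\eta^*(\xi-\xi\al(1))\big)=L(\eta^*\xi)-L(\eta^*\xi\al(1))=L(\eta^*\xi)-L(\eta^*\xi)\cdot 1=0,
\]
where the middle step uses the right-handed transfer identity $L(b\al(a))=L(b)a$ --- equivalently the right-module property $\sca{\eta,\xi\cdot 1}=\sca{\eta,\xi}\cdot 1$ of the semi-inner product --- which follows from the defining relation $L(\al(a)b)=aL(b)$ because a positive map is $*$-preserving. You deliberately avoid that right-handed identity: you compute only the ``length'' $L\big((\xi-\xi\al(1))^*(\xi-\xi\al(1))\big)=0$, which uses nothing beyond the literally stated axiom with $a=1$ (i.e. $L(\al(1)b)=L(b)$) and the fact that $\al(1)$ is a projection, and you then invoke Cauchy--Schwarz for the positive form $(a,b)\mapsto L(a^*b)$ to upgrade this to $L(y^*(\xi-\xi\al(1)))=0$ for all $y$, which is exactly the definition of $\sN_1$. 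Both arguments are sound. The paper's is a one-line computation; yours is slightly longer but isolates where positivity of $L$ actually enters (in the Cauchy--Schwarz step --- strictly, since the form is $A$-valued, one should either quote the Hilbert-module Cauchy--Schwarz inequality or compose with states and apply the scalar version, a point worth a half-sentence in a final write-up) and it avoids having to first derive the right-handed transfer identity from the left-handed one.
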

\begin{proof}
By the computation
\[
L( \eta^* (\xi - \xi \al(1))) = L(\eta^*\xi) - L(\eta^*\xi\al(1)) = L(\eta^*\xi)-L(\eta^*\xi)\cdot 1 =0,
\]
we obtain that $\xi - \xi \al(1) \in \sN_1$.
\end{proof}

Let $\sM_{L^n}$ be the $\ca$-correspondence associated to $(A,\al^n,L^n)$. For $n=0$, we identify $\sM_{L^0}$ with the trivial $\ca$-correspondence $A$ over $A$.
Let $\sM_\infty$ be the direct sum $\oplus_{n\geq 0} \sM_{L^{n}}$. For every $n\in \bbZ_+$, the mappings
\begin{align*}
\ga\colon \sM_{L^n} \rightarrow \sM_{L^{n+1}}: x + \sN_n \mapsto \al(x) + \sN_{n+1},
\end{align*}
are adjointable with $\ga_n^*(x+\sN_{n+1})=L(x) + \sN_n$.
Define the pair $(\rho,S)$ by
\begin{align*}
& S \colon \sM_\infty \rightarrow \sM_\infty: (x_0, x_1+\sN_1, \dots)\mapsto (0, \ga_0(x_0), \ga_1(x_1+\sN_1),\dots),\\
& \rho(y)\colon \sM_\infty \rightarrow \sM_\infty: (x_0,x_1+\sN_1,\dots) \mapsto (yx_0, yx_1+\sN_1,\dots).
\end{align*}
We record for further use that
\[
S^* \colon \sM_\infty \rightarrow \sM_\infty: (x_0, x_1+\sN_1, \dots)\mapsto (L(x_1), L(x_2)+\sN_1,\dots).
\]
Then the pair $(\rho,S)$ defines a pair for $\sT(A,\al,L)$. Moreover $\rho$ is injective and $\rho(1)$ is the identity of $\sL(\sM_\infty)$. Thus $\ca(\rho(A),\rho(A)S) = \ca(\rho(A),S)$.

The following is proved by Brownlowe and Raeburn \cite{BroRae04} and Larsen \cite{Lar10}, independently. We give an alternative short proof.

\begin{theorem}\label{T:*-iso 1} \cite{BroRae04, Lar10}
The $\ca$-algebras $\sT(A,\al,L), \sT_{\sM_L}$ and $\ca(\rho,S)$ are $*$-isomorphic. Consequently, $1 \in A$ acts as a unit on $\sT(A,\al,L)$.
\end{theorem}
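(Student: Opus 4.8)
The plan is to establish a cycle of canonical $*$-epimorphisms among the three $\ca$-algebras and check that their composite is the identity, using the universal properties. First, the pair $(\rho,S)$ constructed above is by definition a pair for $\sT(A,\al,L)$, so there is a canonical $*$-epimorphism $\Phi_1\colon \sT(A,\al,L) \rightarrow \ca(\rho,S)=\ca(\rho(A),S)$. Second, every pair $(\pi,S)$ for $\sT(A,\al,L)$ defines a Toeplitz representation of $\sM_L$: indeed $S^*\pi(x)^*\pi(y)S = \pi(L(x^*y)) = \pi(\sca{x,y}_{\sM_L})$ after passing to the quotient $A/\sN_1$, which forces $\pi$ to kill $\sN_1$ on the relevant side and thus $t(x+\sN_1):=\pi(x)S$ is well defined; the intertwining relation $\pi(a)t(\xi)=t(\phi_{\sM_L}(a)\xi)$ follows from $S\pi(x)=\pi(\al(x))S$. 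Conversely, a Toeplitz representation $(\pi,t)$ of $\sM_L$ with $\pi$ unital (which can always be arranged) gives back a pair for $\sT(A,\al,L)$ by setting $S:=t(1+\sN_1)$: the Cuntz-type relation $\pi(L(x))=S^*\pi(x)S$ is the inner-product condition, and $S\pi(x)=\pi(\al(x))S$ comes from $t(1+\sN_1)\pi(x) = t((1+\sN_1)x) = t(\al(x)+\sN_1) = \pi(\al(x))t(1+\sN_1)$ using Lemma \ref{L:quo}. Hence $\sT(A,\al,L)$ and $\sT_{\sM_L}$ have the same representation theory and there is a canonical $*$-epimorphism $\Phi_2\colon \sT_{\sM_L} \rightarrow \sT(A,\al,L)$ (and one the other way), matching generators $\pi(a)\leftrightarrow\pi(a)$ and $S\leftrightarrow t(1+\sN_1)$.

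Third, I would produce a canonical $*$-epimorphism $\Phi_3\colon \ca(\rho,S)\rightarrow \sT_{\sM_L}$ by exhibiting $(\rho,S)$ as (coming from) an injective Toeplitz representation of $\sM_\infty$, or more directly of $\sM_L$, and invoking the universal property of $\sT_{\sM_L}$; the key point is that $\rho$ is injective and $\rho(1)$ is the identity of $\sL(\sM_\infty)$, so $(\rho,S)$ restricted appropriately is an injective representation of $\sM_L$ and therefore factors through $\sT_{\sM_L}$. Then $\Phi_1,\Phi_2,\Phi_3$ are all canonical (generator-to-generator), so each composite around the triangle fixes the generators and is therefore the identity on the dense subalgebra spanned by monomials $\rho(x_0)S^{n}(S^*)^m\rho(x_1)\cdots$; by continuity each composite is the identity map, forcing all three $*$-epimorphisms to be $*$-isomorphisms. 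Finally, since $\rho(1)$ is the identity of $\sL(\sM_\infty)$ and $\rho(1)$ lies in $\ca(\rho,S)$, transporting back along these isomorphisms shows that the image of $1\in A$ is a unit for $\sT(A,\al,L)$.

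The main obstacle I expect is the careful verification that a representation of $\sM_L$ unambiguously yields a pair for $\sT(A,\al,L)$ and vice versa — specifically handling the quotient by $\sN_1$ and the non-unital subtleties (one must check that $\pi(x)S$ only depends on $x+\sN_1$, which uses $S^*\pi(x)^*\pi(x)S=\pi(L(x^*x))$, and that the factor $\al(1)$ inserted by Lemma \ref{L:quo} does not spoil the intertwining relation), together with the reduction to unital $\pi$ that is quoted from \cite{BroRae04, Lar10}. The existence of at least one injective pair, namely $(\rho,S)$, is exactly what makes the argument go through and is why the concrete model $\sM_\infty$ was built; once that is in hand the rest is a routine diagram chase.
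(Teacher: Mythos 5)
The first two sides of your triangle are fine (and the passage between pairs for $\sT(A,\al,L)$ and unital Toeplitz representations of $\sM_L$, with the unital reduction quoted from \cite{BroRae04, Lar10}, is exactly as in the paper), but the third side, $\Phi_3\colon \ca(\rho,S)\rightarrow \sT_{\sM_L}$, is where the proposal has a genuine gap. The universal property of $\sT_{\sM_L}$ gives a $*$-epimorphism \emph{from} $\sT_{\sM_L}$ \emph{onto} the $\ca$-algebra generated by any Toeplitz representation, i.e.\ a map $\sT_{\sM_L}\rightarrow \ca(\rho,t)$; saying that $(\rho,t)$ ``factors through $\sT_{\sM_L}$'' only restates this and does not produce a map in the direction you need. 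To reverse it you must prove that the concrete representation $(\rho,t)$, $t(x+\sN_1)=\rho(x)S$, integrates to a \emph{faithful} representation of $\sT_{\sM_L}$, and injectivity of $\rho$ together with $\rho(1)=I$ is not enough: for $A=\bbC$ and $X=\bbC$ the Toeplitz--Pimsner algebra is the classical Toeplitz algebra, and the representation sending the generator to a unitary is injective and unital on the coefficients but is far from faithful, since it kills the compacts. So the existence of your canonical $\Phi_3$ is essentially the theorem itself, and the justification offered does not prove it.

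The paper closes exactly this gap by applying the gauge-invariance uniqueness theorem for Toeplitz--Cuntz--Pimsner algebras \cite{FowRae99, Kats04} to the composite $\Phi=\Phi_2\circ\Phi_1\colon \sT_{\sM_L}\rightarrow \ca(\rho,S)$. Besides injectivity of $\rho$ this requires two verifications that are absent from your argument: first, that $(\rho,t)$ admits a gauge action, implemented by the unitaries $u_z$ on $\sM_\infty$ which multiply the summand $\sM_{L^n}$ by $z^n$; and second, that there is no accidental covariance, i.e.\ $\{a\in A \mid \rho(a)\in\psi_t(\sK(\sM_L))\}=(0)$, which the paper gets from the computation that $\psi_t(\Theta_{x+\sN_1,\,y+\sN_1})$ annihilates the zeroth summand $A\subseteq\sM_\infty$ whereas $\rho(a)$ does not unless $a=0$. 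With these in hand $\Phi$ is a $*$-isomorphism, and then $\Phi_1$ and $\Phi_2$ are as well. If you want to keep the triangle-chase format, you could instead identify $(\rho,t)$ with the Fock representation of $\sM_L$ (identifying $\sM_{L^n}$ with $\sM_L^{\otimes n}$) and appeal to its known faithfulness on $\sT_{\sM_L}$, but some substitute for the ``no compact covariance'' verification cannot be avoided; it is the mathematical heart of the statement.
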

\begin{proof}
By universality there are canonical $*$-epimorphisms $\Phi_1$ and $\Phi_2$ such that
\[
\sT_{\sM_L} \stackrel{\Phi_1}{\longrightarrow} \sT(A,\al,L) \stackrel{\Phi_2}{\longrightarrow}
\ca(\rho(A),\rho(A)S)= \ca(\rho,S),
\]
where we use that $\rho(1)=I$. Then the $*$-epimorphism $\Phi=\Phi_2 \circ \Phi_1$ defines a pair $(\rho,t)$ for $\sM_L$ with
\[
\rho(x)= \Phi(x), \, t(x+\sN_1):= \Phi(x+\sN_1) = \rho(x)S, \, \foral x\in A.
\]

Note that $\rho$ is injective and $(\rho,t)$ admits the gauge action $\beta_z=\ad_{u_z}$, for $z\in \bbT$, where $u_z(0,\dots,0,x+\sN_n,0,\dots) = z^n(0, \dots, 0, x + \sN_n,0 ,\dots)$. Moreover, for a compact operator $\Theta_{x+\sN_1, y + \sN_1} \in \sK(\sM_L)$ observe that
\begin{align*}
\psi_t(\Theta_{x+\sN_1, y+\sN_1})(a, 0, \dots)& = \rho(x)SS^*(ya,0,\dots)=0,
\end{align*}
for all $a \in A$. Hence $\psi_t(\sK(\sM_L))|_A =0$. Therefore, if there was an $a\in A$ such that $\rho(a)=\psi_t(k)$ for some $k \in \sK(\sM_L)$, then
\begin{align*}
(aa^*,0,\dots)=\rho(a)(a^*,0,\dots)= \psi_t(k)(a^*,0,\dots)=0,
\end{align*}
hence $a=0$. Thus the ideal $\{x\in A \mid \rho(x) \in \psi_t(\sK(\sM_L))\}$ is trivial. Then $\Phi$ is a $*$-isomorphism, in view of the gauge invariance theorem for Toeplitz-Cuntz-Pimsner algebras \cite{FowRae99, Kats04}. Consequently, $\Phi_1$ and $\Phi_2$ are $*$-isomorphisms.
\end{proof}

\subsection{Exel's Crossed Product $A\rtimes_{\al,L} \bbN \simeq \sO(K_\al,\sM_L)$}\label{Ss:crpr exel}

In what follows we identify $\sT(A,\al,L), \sT_{\sM_L}$ with $\ca(\rho,S)$ and we will omit the symbol $\rho$.

A \emph{redundancy} $(a,k)$ is a pair in $A\times \overline{ASS^*A}$ such that
\begin{align*}
abS=kbS, \foral b\in A.
\end{align*}
Equivalently, $\phi_{\sM_L}(a)= k \in \sK(\sM_L)$ and $\rho(a) t(b+ \sN_1) = \psi_t(k) t(b+\sN_1)$.
\emph{Exel's crossed product} $A\rtimes_{\al,L} \bbN$ is the quotient of $\sT(A,\al,L)$ by the ideal $\sI$ generated by $a-k$, where $(a,k)$ is a redundancy and $a\in \overline{A\al(A) A}$ \cite{Exe03}.

Brownlowe and Raeburn \cite[Corollary 3.6]{BroRae04} show that $A\rtimes_{\al,L} \bbN$ is $*$-isomorphic to the relative Cuntz-Pimsner algebra $\sO(K_\al,\sM_L)$ where
\[
K_\al:=\overline{A\al(A)A} \cap \phi_{\sM_L}^{-1}(\sK(\sM_L)).
\]
(cf. remarks preceding Lemma \ref{L:lemma J}). In \cite[Theorem 4.2]{BroRae04} it is proved that $A \hookrightarrow A\rtimes_{\al,L} \bbN$ if and only if $L$ is \emph{almost faithful on $K_\al$}, i.e., if $x\in K_\al$ and $L((xy)^*xy)=0$ for all $y\in A$, then $x=0$. This is equivalent to letting $\phi_{\sM_L}|_{K_\al}$ be injective. In view of Lemma \ref{L:lemma J}, which extends \cite[Lemma 2.2]{BroRae04}, we obtain a list of other sufficient and necessary conditions. We gather all these in the following Proposition.

\begin{proposition}\label{P: Kats ideal}
Let $\sM_L$ be the $\ca$-correspondence associated with an Exel system $(A,\al,L)$. Then the following are equivalent:
\begin{enumerate}
\item $A \hookrightarrow \sO(K_\al,\sM_L)$;
\item $\phi_{\sM_L}|_{K_\al}$ is injective;
\item $K_\al \subseteq J_{\sM_L}$;
\item there is a canonical $*$-epimorphism $\sO(K_\al,\sM_L) \rightarrow \sO_{\sM_L}$;
\item $\sT^+_{\sM_L} \hookrightarrow \sO(K_\al,\sM_L)$;
\item $L$ is almost faithful on $K_\al$.
\end{enumerate}
\end{proposition}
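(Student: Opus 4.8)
The plan is to reduce the whole statement to Lemma \ref{L:lemma J}, applied to the correspondence $X=\sM_L$ and the ideal $K=K_\al$, and then to identify the two remaining items with the corresponding conditions there. First I would verify the hypothesis of Lemma \ref{L:lemma J}: the set $\phi_{\sM_L}^{-1}(\sK(\sM_L))$ is a closed two-sided ideal of $A$, being the preimage of the ideal $\sK(\sM_L)\trianglelefteq\sL(\sM_L)$ under the $*$-homomorphism $\phi_{\sM_L}$; since $\overline{A\al(A)A}$ is also an ideal, so is the intersection $K_\al$, and by construction $K_\al\subseteq\phi_{\sM_L}^{-1}(\sK(\sM_L))$. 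Thus Lemma \ref{L:lemma J} applies verbatim and yields at once the equivalence of items $(1)$, $(2)$, $(3)$, $(5)$, together with the intermediate condition that \emph{every $J_{\sM_L}$-covariant representation of $\sM_L$ is $K_\al$-covariant}.

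Next I would show that item $(4)$ is equivalent to this intermediate condition. For the easy direction, if every $J_{\sM_L}$-covariant representation is $K_\al$-covariant, then in particular the universal covariant representation $(\pi,t)$ generating $\sO_{\sM_L}$ is $K_\al$-covariant, so it factors through the universal $K_\al$-coisometric object $\sO(K_\al,\sM_L)$, producing a canonical $*$-epimorphism $\sO(K_\al,\sM_L)\to\sO_{\sM_L}$. Conversely, given a canonical $*$-epimorphism $\Phi\colon\sO(K_\al,\sM_L)\to\sO_{\sM_L}$, the image under $\Phi$ of the universal $K_\al$-coisometric representation of $\sM_L$ generates $\sO_{\sM_L}$ and, by canonicity, is (identified with) the universal covariant representation; being the image of a $K_\al$-covariant pair it is itself $K_\al$-covariant, i.e.\ $\pi(a)=\psi_t(\phi_{\sM_L}(a))$ for $a\in K_\al$. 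Since every $J_{\sM_L}$-covariant representation factors canonically through $\sO_{\sM_L}$ and $\psi$ is intertwined by that factorization, the same identity then holds for every $J_{\sM_L}$-covariant representation, which is the intermediate condition.

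It remains to identify item $(2)$ with item $(6)$, which I would do by unwinding definitions. For $x\in A$ one has $\phi_{\sM_L}(x)=0$ in $\sL(\sM_L)$ iff $xy+\sN_1=0$ in $\sM_L$ for every $y\in A$; and since for $\xi\in A/\sN_1$ we have $\nor{\xi}^2=\nor{\sca{\xi,\xi}_{\sM_L}}$, so that $\xi=0$ iff $\sca{\xi,\xi}=0$ (using Cauchy--Schwarz for the $A$-valued semi-inner product $\sca{x,y}=L(x^*y)$, which also shows $\sN_1$ is exactly the null space of this semi-inner product), this happens iff $L((xy)^*xy)=0$ for every $y\in A$. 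Hence $\phi_{\sM_L}|_{K_\al}$ is injective precisely when $x\in K_\al$ together with $L((xy)^*xy)=0$ for all $y\in A$ force $x=0$, i.e.\ precisely when $L$ is almost faithful on $K_\al$. Combined with the chain obtained above, all six items are equivalent (and, via the identification $A\rtimes_{\al,L}\bbN\simeq\sO(K_\al,\sM_L)$ of \cite{BroRae04}, item $(1)$ records exactly when $A$ embeds in Exel's crossed product).

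I expect the only genuinely delicate point to be the equivalence of item $(4)$ with the intermediate condition: one must be careful with the direction of the canonical $*$-epimorphism (it runs $\sO(K_\al,\sM_L)\to\sO_{\sM_L}$, reflecting that the $K_\al$-redundancies of \cite{BroRae04} form a weaker system of relations than Katsoulis' full covariance) and must check that its mere existence already forces $K_\al$-covariance of the universal covariant representation, hence of every $J_{\sM_L}$-covariant representation. Everything else is either a direct citation of Lemma \ref{L:lemma J} or the routine norm identification of the third paragraph.
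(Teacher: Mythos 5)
Your proposal is correct and follows essentially the paper's route: the paper likewise obtains the Proposition by applying Lemma \ref{L:lemma J} to $X=\sM_L$ and $K=K_\al$, invoking \cite[Theorem 4.2]{BroRae04} for the equivalence with almost faithfulness of $L$ on $K_\al$ (which, as you verify directly, is the same as injectivity of $\phi_{\sM_L}|_{K_\al}$). Your explicit two-way identification of item $(4)$ with the Lemma's condition that every $J_{\sM_L}$-covariant representation is $K_\al$-covariant merely fills in a step the paper leaves implicit — the forward direction is exactly the commuting diagram in the Lemma's proof — so the substance of the argument is the same.
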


In particular, for unital systems the picture is further simplified.

\begin{proposition}\label{P:unit simpli}
Let $(A,\al,L)$ be an Exel system such that $\al(1)=1$. Then the following are equivalent:
\begin{enumerate}
\item $A \hookrightarrow \sO(K_\al, \sM_L)$;
\item $\sO(K_\al,\sM_L) \simeq \sO_{\sM_L}$.
\end{enumerate}
If any of the above holds, then $\sM_L$ is a Hilbert bimodule.
\end{proposition}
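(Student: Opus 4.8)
The plan is to deduce both equivalences, and then the final assertion, from the theory of relative Cuntz--Pimsner algebras recorded in Lemma~\ref{L:lemma J} and Proposition~\ref{P: Kats ideal}, once the hypothesis $\al(1)=1$ has been used to pin down the coefficient ideal. First I would observe that $\al(1)=1$ forces $\overline{A\al(A)A}=A$, since $1=\al(1)\in\al(A)$ gives $A=A\cdot 1\cdot A\subseteq\overline{A\al(A)A}\subseteq A$. Hence
\[
K_\al=\overline{A\al(A)A}\cap\phi_{\sM_L}^{-1}(\sK(\sM_L))=\phi_{\sM_L}^{-1}(\sK(\sM_L)),
\]
and, because $J_{\sM_L}=\ker\phi_{\sM_L}^{\perp}\cap\phi_{\sM_L}^{-1}(\sK(\sM_L))$, the inclusion $J_{\sM_L}\subseteq K_\al$ holds automatically. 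This is the only point at which unitality of $\al$ enters.

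For $(1)\Leftrightarrow(2)$: by Proposition~\ref{P: Kats ideal} (equivalently Lemma~\ref{L:lemma J}), $(1)$ is equivalent to $K_\al\subseteq J_{\sM_L}$, and together with the reverse inclusion above this forces $K_\al=J_{\sM_L}$, whence $\sO(K_\al,\sM_L)=\sO(J_{\sM_L},\sM_L)=\sO_{\sM_L}$; this gives $(1)\Rightarrow(2)$. For the converse, a canonical $*$-isomorphism $\sO(K_\al,\sM_L)\simeq\sO_{\sM_L}$ is in particular a canonical $*$-epimorphism $\sO(K_\al,\sM_L)\to\sO_{\sM_L}$, and by Proposition~\ref{P: Kats ideal}$(4)$ the existence of such an epimorphism is equivalent to $(1)$. (Equivalently: $J_{\sM_L}\subseteq K_\al$ always furnishes a canonical $*$-epimorphism $\sO_{\sM_L}\to\sO(K_\al,\sM_L)$, which under $(2)$ must be an isomorphism, and $A\hookrightarrow\sO_{\sM_L}$ always since $\phi_{\sM_L}|_{J_{\sM_L}}$ is injective.)

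Now suppose the equivalent conditions hold, so $J_{\sM_L}=K_\al=\phi_{\sM_L}^{-1}(\sK(\sM_L))$. Since $\ker\phi_{\sM_L}\subseteq\phi_{\sM_L}^{-1}(\sK(\sM_L))=J_{\sM_L}\subseteq\ker\phi_{\sM_L}^{\perp}$, the homomorphism $\phi_{\sM_L}$ is injective. By definition $\sM_L$ is a Hilbert bimodule precisely when $\phi_{\sM_L}|_{J_{\sM_L}}$ is an isomorphism onto $\sK(\sM_L)$; injectivity being automatic, what remains is the surjectivity $\phi_{\sM_L}\big(\phi_{\sM_L}^{-1}(\sK(\sM_L))\big)=\sK(\sM_L)$, that is, $\sK(\sM_L)\subseteq\phi_{\sM_L}(A)$. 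The way I would attack this is to take generators $\xi=x+\sN_1$, $\eta=y+\sN_1$ with $x,y\in A$, compute
\[
\Theta^{\sM_L}_{\xi,\eta}(z+\sN_1)=\xi\cdot\sca{\eta,\,z+\sN_1}=x\al\!\big(L(y^{*}z)\big)+\sN_1 ,
\]
and exhibit an $a\in A$ with $\phi_{\sM_L}(a)=\Theta^{\sM_L}_{\xi,\eta}$, i.e.\ with $az-x\al(L(y^{*}z))\in\sN_1$ for all $z$; this is the step that genuinely uses $\al(1)=1$ (together with the relation $\xi+\sN_1=\xi\al(1)+\sN_1$ of Lemma~\ref{L:quo}). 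I expect this verification that every generalized rank-one operator on $\sM_L$ is a left multiplication --- equivalently, the surjectivity of $\phi_{\sM_L}$ onto $\sK(\sM_L)$ --- to be the main obstacle, the rest being bookkeeping with coefficient ideals of relative Cuntz--Pimsner algebras.
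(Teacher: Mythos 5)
Your handling of the equivalence $(1)\Leftrightarrow(2)$ is correct and is essentially the paper's own argument: $\al(1)=1$ gives $\overline{A\al(A)A}=A$, hence $K_\al=\phi_{\sM_L}^{-1}(\sK(\sM_L))\supseteq J_{\sM_L}$, and Proposition \ref{P: Kats ideal} converts $(1)$ into $K_\al\subseteq J_{\sM_L}$, so $K_\al=J_{\sM_L}$ and $\sO(K_\al,\sM_L)=\sO_{\sM_L}$, while $(2)\Rightarrow(1)$ follows from the fact that $A$ always embeds in $\sO_{\sM_L}$ (this is exactly how the paper dispatches it).

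The gap is the final assertion. You correctly reduce it to the surjectivity $\sK(\sM_L)\subseteq\phi_{\sM_L}(A)$ (injectivity of $\phi_{\sM_L}$ does follow as you argue), but you do not carry out this step: you only announce that you would seek $a\in A$ with $az-x\al(L(y^{*}z))\in\sN_1$ for all $z$, and you label it the main obstacle. It genuinely is, and it cannot be overcome from $\al(1)=1$ alone: the identity $\Theta_{x+\sN_1,\,y+\sN_1}=\phi_{\sM_L}(x\al(1)y^{*})$ is obtained in the paper only under the additional hypothesis $(\dagger)$, i.e. $\al\circ L(x)=\al(1)x\al(1)$ (Proposition \ref{P:simple}, which feeds Theorem \ref{T:Exel}); with a merely unital $\al$ there is no mechanism for turning $\al(L(y^{*}z))$ into a left multiplication. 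Concretely, take $A=C(\bbT)$, $\al(f)(z)=f(z^{2})$ and $L$ the averaging transfer operator over the two preimages. Then $\al(1)=1$, $\sN_1=(0)$, and $\sM_L$ is a free right module of rank two (an orthonormal generating set is given by the classes of $1$ and of the identity function), so $\phi_{\sM_L}$ is injective, $\phi_{\sM_L}^{-1}(\sK(\sM_L))=K_\al=J_{\sM_L}=A$, and conditions $(1)$ and $(2)$ hold; yet $\sK(\sM_L)\simeq M_2(C(\bbT))$ is noncommutative and strictly contains the commutative algebra $\phi_{\sM_L}(A)$, so $\phi_{\sM_L}|_{J_{\sM_L}}$ is not onto $\sK(\sM_L)$. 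Thus the step you deferred is not bookkeeping: it fails in this generality, and the bimodule conclusion really requires the stronger hypotheses under which the paper proves it later (Theorem \ref{T:Exel}); note that the paper's own proof of this proposition also offers no argument for the bimodule claim beyond $J_{\sM_L}=K_\al=\phi_{\sM_L}^{-1}(\sK(\sM_L))$, which, as your own reduction shows, only yields injectivity.
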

\begin{proof}
Since $A \hookrightarrow \sO_{\sM_L}$ it suffices to show $[(1) \Rightarrow (2)]$. By $\al(1)=1$ we obtain $\overline{A \al(A) A} = A$. Hence $K_\al = \phi^{-1}(\sK(\sM_L))$, thus $J_{\sM_L} \subseteq K_\al$. If item $(1)$ holds then $K_\al \subseteq J_{\sM_L}$, therefore $J_{\sM_L}= K_\al = \phi^{-1}(\sK(\sM_L))$.
\end{proof}

\begin{remark}\label{R:corr Exel}
In \cite[Section 4]{Exe03} Exel shows that for the systems $(A,\al,L)$ that arise (uniquely) by a $*$-monomorphism with hereditary range, the $\ca$-algebras $\fA(A,\al)$ and $A\rtimes_{\al,L} \bbN$ coincide. But $A\rtimes_{\al,L} \bbN$ is a quotient of the unital $\ca$-algebra $\sT(A,\al,L)$, hence $\fA(A,\al)$ is unital. In view of Proposition \ref{P:criterion} this implies that $\al$ is unital, which is not the general case as stated in \cite[Theorem 4.7]{Exe03}. The error appears in the computations of the proof of \cite[Proposition 4.6]{Exe03}, where $1\in A$ is treated as a unit of $\fA(A,\al)$.
\end{remark}

\subsection{Revisiting Exel's Example}\label{S:Ex}

We correct the error that appears in \cite[Theorem 4.7]{Exe03}. In fact we do more; by following an alternate route, we shed light also on the non-injective case (see Theorem \ref{T:main 2}). To simplify notation, we identify $A \rtimes_{\al,L} \bbN$ with $\sO(K_\al,\sM_L)$, and $\sT(A,\al,L), \sT_{\sM_L}$ with $\ca(\rho,S)$, where we will omit writing $\rho$.

Our basic assumption is that $(A,\al,L)$ satisfies
\begin{align*}
(\dagger)\qquad \al\circ L (x) = \al(1)x\al(1), \foral x\in A.
\end{align*}
In this case, $\al(L(1))=\al(1)$ and $L$ is \emph{non-degenerate} in the sense of \cite{Exe03}.

The choice of $(\dagger)$ enables us to identify $K_\al$ with $\overline{A \al(A) A}$.

\begin{proposition}\label{P:simple}
Let an Exel system $(A,\al,L)$ that satisfies $(\dagger)$. Then we obtain $\phi^{-1}_{\sM_L}(\sK_{\sM_L}) = \overline{A\al(A) A} + \ker\phi_{\sM_L}$. Therefore $J_{\sM_L} \subseteq K_\al = \overline{A \al(A) A} $.
\end{proposition}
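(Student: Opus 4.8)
The plan is to compute $\phi_{\sM_L}^{-1}(\sK(\sM_L))$ explicitly using the hypothesis $(\dagger)$, and then read off the statement about $K_\al$ and $J_{\sM_L}$ as an immediate consequence. The key observation I would exploit is that the rank-one operators on $\sM_L$ have a very transparent form: for $\xi,\eta \in A$ (viewed as elements of $\sM_L$ modulo $\sN_1$), the operator $\Theta^{\sM_L}_{\xi,\eta}$ acts on $\zeta + \sN_1$ by $\xi \cdot \sca{\eta,\zeta} + \sN_1 = \xi\,\al(L(\eta^*\zeta)) + \sN_1$. Using $(\dagger)$ this becomes $\xi\,\al(1)\,\eta^*\zeta\,\al(1) + \sN_1$, and by Lemma \ref{L:quo} the trailing $\al(1)$ can be dropped, so $\Theta^{\sM_L}_{\xi,\eta}(\zeta+\sN_1) = \xi\al(1)\eta^*\zeta + \sN_1$. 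Hence $\psi$ of a finite sum of rank-ones is just left multiplication by the element $\sum_k \xi_k\al(1)\eta_k^* \in \overline{A\al(A)A}$; passing to norm limits, every compact operator is left multiplication by an element of $\overline{A\al(A)A}$, and conversely every $a \in \overline{A\al(A)A}$ gives $\phi_{\sM_L}(a) \in \sK(\sM_L)$ (first for $a = b\al(c)d$, write $\al(c) = \al(1)\al(c)\al(1)$ and realize $\phi_{\sM_L}(b\al(c)d)$ as a compact operator built from the above rank-ones via $(\dagger)$ again, then take spans and closures).

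With that in hand, the inclusion $\overline{A\al(A)A} \subseteq \phi_{\sM_L}^{-1}(\sK(\sM_L))$ is established, and clearly $\ker\phi_{\sM_L} \subseteq \phi_{\sM_L}^{-1}(\sK(\sM_L))$ since $0 \in \sK(\sM_L)$; so $\overline{A\al(A)A} + \ker\phi_{\sM_L} \subseteq \phi_{\sM_L}^{-1}(\sK(\sM_L))$. For the reverse inclusion, suppose $\phi_{\sM_L}(a) = k$ for some $k \in \sK(\sM_L)$. By the previous paragraph $k$ is left multiplication by some $b \in \overline{A\al(A)A}$; then $\phi_{\sM_L}(a - b) = 0$, i.e. $a - b \in \ker\phi_{\sM_L}$, so $a = b + (a-b) \in \overline{A\al(A)A} + \ker\phi_{\sM_L}$. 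This gives the asserted equality $\phi^{-1}_{\sM_L}(\sK_{\sM_L}) = \overline{A\al(A) A} + \ker\phi_{\sM_L}$.

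Finally, for the last two assertions: $K_\al = \overline{A\al(A)A} \cap \phi_{\sM_L}^{-1}(\sK(\sM_L)) = \overline{A\al(A)A}$ since $\overline{A\al(A)A}$ is already contained in $\phi_{\sM_L}^{-1}(\sK(\sM_L))$. And $J_{\sM_L} = \ker\phi_{\sM_L}^{\perp} \cap \phi_{\sM_L}^{-1}(\sK(\sM_L)) \subseteq \phi_{\sM_L}^{-1}(\sK(\sM_L)) = \overline{A\al(A)A} + \ker\phi_{\sM_L}$; intersecting with $\ker\phi_{\sM_L}^{\perp}$ kills the $\ker\phi_{\sM_L}$ summand (an element of an ideal that is orthogonal to $\ker\phi_{\sM_L}$ and lies in $\overline{A\al(A)A}+\ker\phi_{\sM_L}$ must already lie in $\overline{A\al(A)A}$, since the decomposition $A = \ker\al \oplus \Im L$ — available here because $(\dagger)$ makes $L$ non-degenerate, cf. Subsection \ref{Ss:transfer} — is orthogonal and $\overline{A\al(A)A} \subseteq \Im L$ while $\ker\phi_{\sM_L} \supseteq \ker\al$), yielding $J_{\sM_L} \subseteq \overline{A\al(A)A} = K_\al$.

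The main obstacle I anticipate is the careful verification that $\phi_{\sM_L}$ maps $\overline{A\al(A)A}$ \emph{into} the compacts (as opposed to just into $\sL(\sM_L)$): one must genuinely use $(\dagger)$ to rewrite $\phi_{\sM_L}(b\al(c)d)$ as an honest compact operator, handling the closure and the ideal structure of $\overline{A\al(A)A}$ with some care, and the bookkeeping with $\sN_1$ and Lemma \ref{L:quo} needs to be done cleanly. The orthogonality argument in the last step also requires knowing $\ker\phi_{\sM_L}$ relative to the decomposition $A = \ker\al \oplus \Im L$, which should follow from unwinding the definition of $\sN_1$ together with $(\dagger)$, but deserves an explicit check.
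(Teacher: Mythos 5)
Your computation of the rank-one operators and the resulting identity $\Theta_{\xi+\sN_1,\eta+\sN_1}=\phi_{\sM_L}(\xi\al(1)\eta^*)$ is correct and is essentially the paper's argument (the paper uses $\Theta_{\al(x)+\sN_1,1+\sN_1}=\phi_{\sM_L}(\al(x))$ for one inclusion and $\Theta_{\eta+\sN_1,\zeta+\sN_1}=\phi_{\sM_L}(\eta\al(1)\zeta^*)$ for the other, but this is the same computation), and the equality $\phi^{-1}_{\sM_L}(\sK(\sM_L))=\overline{A\al(A)A}+\ker\phi_{\sM_L}$ together with $K_\al=\overline{A\al(A)A}$ follows exactly as you say.

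The genuine gap is in your justification of the last step, $J_{\sM_L}\subseteq\overline{A\al(A)A}$. You argue that intersecting with $\ker\phi_{\sM_L}^{\perp}$ kills the $\ker\phi_{\sM_L}$ summand because $\overline{A\al(A)A}\subseteq\Im L$ and $\ker\al\subseteq\ker\phi_{\sM_L}$, invoking $A=\ker\al\oplus\Im L$. Both of these inclusions are false in general under $(\dagger)$. Take $A=\bbC^2$, $\al(a,b)=(b,0)$, $L(a,b)=(0,a)$; this is an Exel system satisfying $(\dagger)$, and one checks $\sN_1=0\oplus\bbC$, so $\ker\phi_{\sM_L}=0\oplus\bbC=\Im L$, while $\ker\al=\bbC\oplus 0=\overline{A\al(A)A}$. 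Thus $\overline{A\al(A)A}\not\subseteq\Im L$ and $\ker\al\not\subseteq\ker\phi_{\sM_L}$ (indeed the two decompositions are ``transverse'' to each other), so your orthogonality argument does not go through; moreover, even if those inclusions held, they would not by themselves control a general element of $\ker\phi_{\sM_L}$, which need not lie in $\ker\al$. The step you need is true, but for the standard ideal-theoretic reason: if $I,J$ are closed ideals of $A$ and $x\in J^{\perp}\cap(I+J)$, write $x=b+j$ with $b\in I$, $j\in J$ and let $(e_\lambda)$ be an approximate unit for $J$; then $0=xe_\lambda=be_\lambda+je_\lambda$, so $j=-\lim_\lambda be_\lambda\in I$ and hence $x\in I$. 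Applying this with $I=\overline{A\al(A)A}$ and $J=\ker\phi_{\sM_L}$ gives
\begin{align*}
J_{\sM_L}=\ker\phi_{\sM_L}^{\perp}\cap\phi^{-1}_{\sM_L}(\sK(\sM_L))
=\ker\phi_{\sM_L}^{\perp}\cap\bigl(\overline{A\al(A)A}+\ker\phi_{\sM_L}\bigr)
\subseteq\overline{A\al(A)A}=K_\al,
\end{align*}
which is the argument the paper's displayed equalities tacitly rely on. With that replacement your proof is complete.
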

\begin{proof}
Trivially $\ker\phi_{\sM_L} \subseteq \phi^{-1}_{\sM_L}(\sK_{\sM_L})$. For all $x, \xi \in A$, we obtain
\begin{align*}
\Theta_{\al(x) + \sN_1, 1+\sN_1} (\xi + \sN_1)
& = \al(x) \al(L(\xi)) + \sN_1
  = \al(x) \xi \al(1) + \sN_1\\
& = \al(x)\xi + \sN_1 = \phi_{\sM_L}(\al(x)) (\xi + \sN_1),
\end{align*}
where we have used Lemma \ref{L:quo}. Therefore $\phi_{\sM_L}(\al(A)) \subseteq \sK(\sM_L)$, hence $\overline{A\al(A) A} \subseteq \phi^{-1}_{\sM_L}(\sK(\sM_L))$. Now, let $\xi, \eta, \zeta \in A$; then
\begin{align*}
\Theta_{\eta + \sN_1, \zeta+\sN_1} (\xi + \sN_1)
& = \eta \al(L(\zeta^*\xi)) + \sN_1
   = \eta \al(1) \zeta^* \xi \al(1) + \sN_1\\
& = \eta \al(1) \zeta^* \xi + \sN_1
   = \phi_{\sM_L}( \eta \al(1) \zeta^*) (\xi + \sN_1).
\end{align*}
By passing to limits of linear combinations, we obtain that for every $k\in \sK(\sM_L)$ there is an $x \in \overline{A \al(A) A}$ such that $\phi_{\sM_L}(x)=k$. Thus, $\phi^{-1}_{\sM_L}(\sK_{\sM_L}) \subseteq \overline{A \al(A) A} + \ker\phi_{\sM_L}$.
To end the proof, recall that $J_{\sM_L} = \phi^{-1}_{\sM_L}(\sK(\sM_L)) \cap \ker\phi_{\sM_L}^\perp$. Therefore
\begin{align*}
J_{\sM_L}
& = \phi^{-1}(\sK(\sM_L)) \cap \ker\phi_{\sM_L}^\perp \cap (\overline{A \al(A) A}  + \ker\phi_{\sM_L})\\
& =\phi^{-1}(\sK(\sM_L)) \cap \ker\phi_{\sM_L}^\perp \cap \overline{A \al(A) A}
   \subseteq K_\al,
\end{align*}
and $\overline{A\al(A)A} \subseteq \overline{A\al(A)A}\cap (\overline{A\al(A)A} + \ker\phi_{\sM_L}) = K_\al \subseteq \overline{A\al(A)A}$.
\end{proof}

\begin{theorem}\label{T:Exel}
Let an Exel system $(A,\al,L)$ that satisfies $(\dagger)$. Then the following are equivalent
\begin{enumerate}
\item $A \hookrightarrow \sO(K_\al,\sM_L)$;
\item $\sO(K_\al,\sM_L) \simeq \sO_{\sM_L}$.
\end{enumerate}
If any of the above holds, then $\sM_L$ is a Hilbert bimodule.
\end{theorem}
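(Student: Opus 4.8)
The plan is to follow the pattern of Proposition~\ref{P:unit simpli}, with condition $(\dagger)$ playing the role that $\al(1)=1$ played there; the whole statement reduces to showing that the Katsura ideal $J_{\sM_L}$ equals $K_\al$ under hypothesis~(1).

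Since $A$ always embeds in $\sO_{\sM_L}$ (as recalled before Lemma~\ref{L:lemma J}), the implication $[(2)\Rightarrow(1)]$ is immediate: a canonical $*$-isomorphism $\sO(K_\al,\sM_L)\simeq\sO_{\sM_L}$ carries the copy of $A$ in the source onto the faithful copy of $A$ in $\sO_{\sM_L}$. For $[(1)\Rightarrow(2)]$, suppose $A\hookrightarrow\sO(K_\al,\sM_L)$. Proposition~\ref{P:simple} already gives, under $(\dagger)$, that $K_\al=\overline{A\al(A)A}$ and $J_{\sM_L}\subseteq K_\al$; on the other hand the equivalence $(1)\Leftrightarrow(3)$ of Proposition~\ref{P: Kats ideal} turns hypothesis~(1) into the reverse inclusion $K_\al\subseteq J_{\sM_L}$. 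Hence $K_\al=J_{\sM_L}$, and since a representation of $\sM_L$ is covariant precisely when it is $J_{\sM_L}$-coisometric, the relative algebra $\sO(K_\al,\sM_L)$ coincides with $\sO(J_{\sM_L},\sM_L)=\sO_{\sM_L}$.

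For the final assertion, assume one (hence both) of (1), (2), so that $J_{\sM_L}=K_\al=\overline{A\al(A)A}$. Injectivity of $\phi_{\sM_L}|_{J_{\sM_L}}$ is automatic because $J_{\sM_L}\subseteq\ker\phi_{\sM_L}^\perp$ and $\phi_{\sM_L}$ is injective on $\ker\phi_{\sM_L}^\perp$. For surjectivity onto $\sK(\sM_L)$, I would recycle the rank-one computation in the proof of Proposition~\ref{P:simple}: with the help of $(\dagger)$ and Lemma~\ref{L:quo} one has $\Theta_{\eta+\sN_1,\zeta+\sN_1}=\phi_{\sM_L}(\eta\al(1)\zeta^*)$ with $\eta\al(1)\zeta^*\in\overline{A\al(A)A}=J_{\sM_L}$, so, passing to norm-limits of finite sums of such operators, every element of $\sK(\sM_L)$ lies in $\phi_{\sM_L}(J_{\sM_L})$. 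Thus $\phi_{\sM_L}|_{J_{\sM_L}}$ is a $*$-isomorphism onto $\sK(\sM_L)$, i.e., $\sM_L$ is a Hilbert bimodule.

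I do not expect a genuine obstacle in this deduction: the essential work was front-loaded into Proposition~\ref{P:simple}, where $(\dagger)$ (through Lemma~\ref{L:quo}) is exactly what is needed to locate the rank-one operators inside $\overline{A\al(A)A}$ and thereby compute $\phi_{\sM_L}^{-1}(\sK(\sM_L))$. The one point I would double-check while writing the details is that the relative Cuntz--Pimsner algebra $\sO(K,\sM_L)$ genuinely depends only on the class of $K$-coisometric representations, so that the equality $K_\al=J_{\sM_L}$ can be substituted verbatim into $\sO(K_\al,\sM_L)$.
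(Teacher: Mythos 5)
Your argument is correct and is essentially the paper's own proof: the paper likewise combines Proposition \ref{P: Kats ideal} (giving $K_\al \subseteq J_{\sM_L}$ from (1)) with Proposition \ref{P:simple} (giving $J_{\sM_L} \subseteq K_\al = \overline{A\al(A)A}$) to conclude $J_{\sM_L}=K_\al$, hence the equivalence, and then reads off surjectivity of $\phi_{\sM_L}|_{J_{\sM_L}}$ onto $\sK(\sM_L)$ from the rank-one computation in Proposition \ref{P:simple}. You merely spell out details the paper leaves implicit (the $(2)\Rightarrow(1)$ direction via $A\hookrightarrow\sO_{\sM_L}$ and the injectivity/surjectivity check for the Hilbert bimodule claim), and your final worry is harmless since $\sO(K,\sM_L)$ is by definition universal for $K$-coisometric representations.
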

\begin{proof}
In view of Propositions \ref{P: Kats ideal} and \ref{P:simple}, if any of items $(1)$ or $(2)$ holds, then $J_{\sM_L} =K_\al = \overline{A\al(A)A}$ which implies that they are equivalent. Then $\phi_{\sM_L}|_{J_{\sM_L}}$ is onto $\sK(\sM_L)$, hence $\sM_L$ is a Hilbert bimodule.
\end{proof}

We want to examine the connection with $A\times_\al^1 \bbN$. Towards this, the first step is the following Proposition.

\begin{proposition}\label{P:com diag}
Let $(A,\al,L)$ be an Exel system that satisfies $(\dagger)$. Then there are canonical $*$-epimorphisms such that the diagram
\begin{align*}
\xymatrix{ \sT(A,\al,L) \ar[dr]^{\tilde{\phi}} \ar[rr]^\phi & & \sO(K_\al,\sM_L)  \ar[dl]^{\hat\phi} \\
           & A \times_\al^1 \bbN&
}
\end{align*}
commutes.
\end{proposition}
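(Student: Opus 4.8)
The plan is to exhibit the three canonical $*$-epimorphisms and to check only the single relation $\hat\phi\circ\phi=\tilde\phi$; surjectivity of $\hat\phi$ is then automatic from that of $\tilde\phi$. The map $\phi\colon\sT(A,\al,L)\to\sO(K_\al,\sM_L)$ is nothing but the canonical quotient, since by \cite[Corollary 3.6]{BroRae04} (together with the remarks preceding Lemma \ref{L:lemma J}) the algebra $A\rtimes_{\al,L}\bbN\simeq\sO(K_\al,\sM_L)$ is a quotient of $\sT(A,\al,L)\simeq\sT_{\sM_L}$.

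The heart of the matter is the following consequence of $(\dagger)$: \emph{every pair $(\pi,T)$ for $A\times_\al^1\bbN$ is also a pair for $\sT(A,\al,L)$, with the generator $S$ realised by $T$.} Indeed $T\pi(x)=\pi(\al(x))T$ holds because $(\pi,T)\in\sF_S$, and since $\pi$ is non-degenerate we have $\pi(1)=I$, whence $\pi(\al(1))=T\pi(1)T^*=TT^*$. Therefore
\[
T\pi(L(x))T^*=\pi(\al(L(x)))=\pi(\al(1)\,x\,\al(1))=TT^*\,\pi(x)\,TT^*,
\]
where the middle equality is exactly $(\dagger)$; multiplying on the left by $T^*$ and on the right by $T$ and using $T^*T=I$ gives $\pi(L(x))=T^*\pi(x)T$, the remaining defining relation of $\sT(A,\al,L)$. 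Applying this to the universal pair $(\pi,T)$ of $A\times_\al^1\bbN$ and invoking the universal property of $\sT(A,\al,L)$, we obtain a canonical $*$-epimorphism $\tilde\phi\colon\sT(A,\al,L)\to\ca(\pi(A),T)=A\times_\al^1\bbN$ carrying the canonical copy of $A$ onto $\pi(A)$ and $S$ to $T$; it is onto because $\pi(A)$ and $T$ generate the target.

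It remains to factor $\tilde\phi$ through $\phi$. Under $\sT(A,\al,L)\simeq\sT_{\sM_L}$, the homomorphism $\tilde\phi$ is implemented by the representation $(\pi,t)$ of $\sM_L$ given by $t(x+\sN_1)=\pi(x)T$, which is well defined and isometric on $\sM_L$ because $\nor{\pi(x)T}^2=\nor{\pi(L(x^*x))}=\nor{\sca{x+\sN_1,x+\sN_1}}$. Since $\sO(K_\al,\sM_L)$ is the quotient of $\sT_{\sM_L}$ by the ideal generated by $\pi_u(a)-\psi_{t_u}(\phi_{\sM_L}(a))$ for $a\in K_\al$ (where $(\pi_u,t_u)$ is the universal representation of $\sT_{\sM_L}$), it is enough to check that $(\pi,t)$ is $K_\al$-coisometric. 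By Proposition \ref{P:simple} the assumption $(\dagger)$ gives $K_\al=\overline{A\al(A)A}$, and the computation in its proof (using Lemma \ref{L:quo}) shows $\phi_{\sM_L}(\al(x))=\Theta_{\al(x)+\sN_1,\,1+\sN_1}$, so that
\[
\psi_t(\phi_{\sM_L}(\al(x)))=t(\al(x)+\sN_1)\,t(1+\sN_1)^*=\pi(\al(x))\,TT^*=\pi(\al(x))\,\pi(\al(1))=\pi(\al(x)).
\]
Multiplying on either side by $\phi_{\sM_L}(b)$, $\phi_{\sM_L}(c)$ and using $\psi_t(\phi_{\sM_L}(b)k)=\pi(b)\psi_t(k)$ and $\psi_t(k\phi_{\sM_L}(c))=\psi_t(k)\pi(c)$, then passing to closed linear combinations, gives $\psi_t(\phi_{\sM_L}(a))=\pi(a)$ for all $a\in\overline{A\al(A)A}=K_\al$. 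Hence $\tilde\phi$ annihilates the defining ideal of $\sO(K_\al,\sM_L)$, so it factors as $\tilde\phi=\hat\phi\circ\phi$ for a unique (necessarily canonical) $*$-epimorphism $\hat\phi\colon\sO(K_\al,\sM_L)\to A\times_\al^1\bbN$, and the diagram commutes.

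The only genuinely nontrivial step is the key observation above, namely extracting the Exel relation $\pi(L(x))=T^*\pi(x)T$ from the Stacey-type covariance together with $(\dagger)$; everything else is bookkeeping with universal properties and the already-established identifications, so I expect the actual write-up to be quite short.
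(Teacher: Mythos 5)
Your proposal is correct and follows essentially the same route as the paper: the central computation, deriving the transfer relation $\pi(L(x))=T^*\pi(x)T$ from the Stacey covariance together with $(\dagger)$ via $TT^*=\pi(\al(1))$, is exactly the paper's computation with the weighted isometry $S'=T\pi_u(1)$, and your factorization of $\tilde\phi$ through $\sO(K_\al,\sM_L)$ by checking $K_\al$-coisometricity (using Proposition \ref{P:simple}) is just an explicit write-up of the step the paper delegates to the redundancy arguments of Exel's Theorem 4.7. No gaps; the only cosmetic point is that $t$ need only be well defined (it is isometric only when $\pi$ is injective), which does not affect the argument.
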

\begin{proof}
Recall that $A \times_\al^ 1 \bbN \hookrightarrow \fA(A,\al)$ by Proposition \ref{P:sta-exe}. Let $(\pi,T)$ be a pair for $\fA(A,\al)$ and define $S'=\pi(\al(1))T=T\pi(1)$. Then,
\begin{align*}
S'\pi(x)=T\pi(x) =\pi(\al(x))T=\pi(\al(x))S'
\end{align*}
for all $x\in A$. Moreover,
\begin{align*}
(S')^*\pi(x)S' & = T^* \pi(\al(1)) \pi(x) \pi(\al(1)) T = T^* \pi(\al(1)x\al(1)) T\\
&= T^* \pi(\al\circ L(x)) T= T^*T \pi(L(x))= \pi(L(x)),
\end{align*}
for all $x\in A$. Therefore $(\pi,S')$ defines a representation of $\sT(A,\al,L)$, thus the $*$-epimorphism $\tilde\phi$ is defined by the universal property. The rest of the proof follows similar arguments with those of \cite[Theorem 4.7]{Exe03}.
\end{proof}

Given $(A,\al,L)$ satisfying $(\dagger)$, one can define a new system $(A/\sR_\al, \dot{\al}, \dot{L})$ satisfying $(\dagger)$, such that $\dot\al$ is injective and $\dot{L}$ is a well defined (positive) operator. Indeed, for $x\in \sR(A)$ we obtain
\[
\nor{\al^{n+1}(L(x))} = \nor{\al^n( \al(1) x \al(1))}  = \nor{\al^{n+1}(1) \al^n(x) \al^{n+1}(1)} \leq \nor{\al^{n}(x)},
\]
thus $L(\sR_\al) \subseteq \sR_\al$ and $\dot{L}$ is well defined.

\begin{theorem}\label{T:main 2}
Let $(A,\al,L)$ be an Exel system that satisfies $(\dagger)$. Then the following diagram commutes
\begin{align*}
\xymatrix{ \sT(A,\al,L) \ar[dd] \ar[dr] \ar[rr] & &  \sO(\sK_\al,\sM_L) \ar[dl] \ar[dd] \\
           & A\times_\al^1 \bbN \simeq (A/\sR_\al) \times_{\dot\al}^1 \bbN \ar[dr]^{\simeq} &  \\
           \sT(A/\sR_\al,\dot\al,\dot{L})  \ar[rr] \ar[ur]  & & \sO(\sK_{\dot{\al}},\sM_{\dot{L}}) }
\end{align*}
and consequently, $\sM_{\dot{L}}$ is a Hilbert bimodule.
\end{theorem}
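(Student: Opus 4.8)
The plan is to build the diagram from results already available and to identify its one non-formal ingredient, the ``$\simeq$'' arrow, as the inverse of the $*$-epimorphism $\hat\phi$ of Proposition \ref{P:com diag} applied to the reduced system $(A/\sR_\al,\dot\al,\dot L)$. To begin, since $L(\sR_\al)\subseteq\sR_\al$ (noted just before the statement), $\dot L$ is a well-defined continuous positive map; a direct check shows $(A/\sR_\al,\dot\al,\dot L)$ is an Exel system satisfying $(\dagger)$ with $\dot\al$ injective, and the relation $\dot\al(\dot L(1))=\dot\al(1)$ together with injectivity of $\dot\al$ forces $\dot L(1)=1$.

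Next I would set up the canonical arrows. The two vertical $*$-epimorphisms are induced by $q_{\sR_\al}$ via universality: a (defining) representation of $\sT(A/\sR_\al,\dot\al,\dot L)$, resp.\ a $\sK_{\dot\al}$-coisometric representation of $\sM_{\dot L}$, pulled back along $q_{\sR_\al}$ yields a representation of the same kind over $(A,\al,L)$, where one uses $L(\sR_\al)\subseteq\sR_\al$ and the inclusion $q_{\sR_\al}(\sK_\al)\subseteq\sK_{\dot\al}$ coming from Proposition \ref{P:simple}. The slanted arrows into $A\times_\al^1\bbN$ and into $(A/\sR_\al)\times_{\dot\al}^1\bbN$, and the identification $A\times_\al^1\bbN\simeq(A/\sR_\al)\times_{\dot\al}^1\bbN$, are precisely Proposition \ref{P:com diag} and Proposition \ref{P:R_al for fA}. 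Since every map here fixes the relevant copy of $A$ up to $q_{\sR_\al}$ and sends the distinguished isometry to the distinguished isometry, commutativity of each face reduces to a routine check on generators.

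The substantial step is to prove that $\hat\phi\colon\sO(\sK_{\dot\al},\sM_{\dot L})\to(A/\sR_\al)\times_{\dot\al}^1\bbN$, the arrow of Proposition \ref{P:com diag} for the reduced system, is a $*$-isomorphism; the ``$\simeq$'' arrow of the diagram is then its inverse. To build that inverse, let $(\pi_u,t_u)$ be the universal $\sK_{\dot\al}$-coisometric representation of $\sM_{\dot L}$ and set $S:=t_u(1+\sN_1)$. Because $\sT(A/\sR_\al,\dot\al,\dot L)\simeq\sT_{\sM_{\dot L}}$ is unital with unit the image of $1$ (Theorem \ref{T:*-iso 1}), $\pi_u(1)$ is a unit for $\sO(\sK_{\dot\al},\sM_{\dot L})=\ca(\pi_u(A/\sR_\al),S)$, so $\pi_u$ is non-degenerate; and $S^*S=\pi_u(\sca{1+\sN_1,1+\sN_1}_{\sM_{\dot L}})=\pi_u(\dot L(1))=\pi_u(1)$, so $S$ is an isometry. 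For the covariance relation one combines $t_u(z+\sN_1)=\pi_u(z)S$ with $t_u(\dot\al(z)+\sN_1)=S\pi_u(z)$ — the latter from the right action $(1+\sN_1)\cdot z=\dot\al(z)+\sN_1$ — and applies the $\sK_{\dot\al}$-coisometry to $\dot\al(z)\in\sK_{\dot\al}$ via $\phi_{\sM_{\dot L}}(\dot\al(z))=\Theta_{\dot\al(z)+\sN_1,1+\sN_1}$ (Proposition \ref{P:simple}); this yields $\pi_u(\dot\al(z))=\pi_u(\dot\al(z))SS^*=S\pi_u(z)S^*$. Hence $(\pi_u,S)$ is a representation of $(A/\sR_\al)\times_{\dot\al}^1\bbN$, which induces a $*$-epimorphism $\Psi\colon(A/\sR_\al)\times_{\dot\al}^1\bbN\to\sO(\sK_{\dot\al},\sM_{\dot L})$, and comparing $\Psi$ with $\hat\phi$ on generators gives $\Psi\circ\hat\phi=\id$ and $\hat\phi\circ\Psi=\id$. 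I expect the verification of covariance for $(\pi_u,S)$ — in particular the point that $S$ is a genuine isometry, which rests on $\dot L(1)=1$ — to be the main obstacle; the remaining identifications are bookkeeping with universal properties.

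Finally, for the ``consequently'' clause, the composite $A/\sR_\al\hookrightarrow(A/\sR_\al)\times_{\dot\al}^1\bbN\xrightarrow{\Psi}\sO(\sK_{\dot\al},\sM_{\dot L})$ is the canonical map of $A/\sR_\al$ and is injective (the first arrow by Proposition \ref{P:R_al for fA}, the second because $\Psi$ is a $*$-isomorphism), so $A/\sR_\al\hookrightarrow\sO(\sK_{\dot\al},\sM_{\dot L})$. Since $(A/\sR_\al,\dot\al,\dot L)$ satisfies $(\dagger)$, Theorem \ref{T:Exel} then gives that $\sM_{\dot L}$ is a Hilbert bimodule.
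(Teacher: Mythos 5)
Your proposal is correct and follows essentially the paper's own route: reduce to the injective quotient system, observe that $(\dagger)$ together with injectivity forces $\dot L(1)=1$ so the distinguished generator is an isometry, use Proposition \ref{P:simple} (namely $\phi_{\sM_{\dot L}}(\dot\al(z))=\Theta_{\dot\al(z)+\sN_1,1+\sN_1}$ with $\dot\al(z)\in K_{\dot\al}$) to obtain the Stacey covariance relation inside the relative Cuntz--Pimsner algebra, and then invert $\hat\phi$ of Proposition \ref{P:com diag} by universality of the Stacey crossed product, concluding with Theorem \ref{T:Exel}. The only cosmetic difference is that the paper checks covariance via Exel's redundancy $(\al(x),S\rho(x)S^*)$ in the concrete representation $(\rho,S)$, whereas you verify it directly on the universal coisometric representation; the substance is the same.
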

\begin{proof}
In view of the previous remarks, Propositions \ref{P:R_al for fA} and \ref{P:com diag}, and Theorem \ref{T:Exel}, it suffices to show that $A\times_\al^1 \bbN \simeq \sO(K_\al,\sM_L)$ when $\al$ is injective. In this case, relation $(\dagger)$ for $x=1$ gives that $L(1)=1$. Therefore the operator $S$ of Theorem \ref{T:*-iso 1} becomes an isometry.

Let $(\pi,S')$ be the induced representation of $\sO(K_\al,\sM_L)$ by $(\rho,S)$. Then $S'$ is an isometry, as the range of the isometry $S$. By Proposition \ref{P:simple}, we obtain that $(\rho(\al(x)), S\rho(x)S^*)$ is a redundancy, hence $\pi(\al(x)) = S' \pi(x) S'^*$ for all $x\in A$. Thus $(\pi,S')$ defines a representation for $\sO(K_\al,\sM_L)$, hence there is a canonical $*$-epimorphism $\sO(K_\al,\sM_L) \rightarrow A\times_{\al}^1\bbN$, that is the inverse for $\hat\phi$ of Proposition \ref{P:com diag}.
\end{proof}

\begin{remark}
Let the sets $Q=\{ [\text{relation } (\dagger)], [\al(A) \text{ is hereditary}]\}$ and $W=\{[L(1)=1], [\al \text{ is injective}]\}$. It is easy to check that whenever an item from $Q$ and an item from $W$ hold, then all items in $Q$ and $W$ hold. The implication that concerns us at this point is that, when $\al(A)$ is hereditary and $\al$ is injective then the system $(A,\al,L)$ satisfies $(\dagger)$ and $L(1)=1$. This follows readily by the remarks before \cite[Proposition 4.3]{Exe03}, which completes the correction of \cite[Theorem 4.7]{Exe03} mentioned earlier. In any of these cases, the transfer operator is uniquely determined, since $\al$ is injective.

Moreover, we have showed that in this case there is a canonical $*$- isomorphism $\sO_{\sM_L} \rightarrow A \times_\al^1 \bbN$. Surprisingly, the converse also holds and we leave the proof of this metamathematical remark to the reader.
\end{remark}

\begin{remark}
The systems that we examine may not be unital. In fact this case is trivial: when $1=\al(1)=L(1)$ and $(\dagger)$ holds, then $\al$ is injective and $\al\circ L = \id$; therefore $\al$ is an automorphism with $\al^{-1}=L$. Hence $\sT(A,\al,L)$ is the $\ca$-algebra generated by the universal representation of $\fA(A,\al, \text{is})_r$ \cite{Kak11}, and $A\times_{\al,L} \bbN$ is the usual crossed product $A\rtimes_\al \bbZ$.
\end{remark}

\begin{remark}
Brownlowe, Raeburn and Vitadello \cite{BroRaeVit09} examine a second class of examples of Exel systems induced by classical dynamical systems $(T,\phi)$. For this class they show that the set of redundancies is the Katsura ideal, moreover that $C_0(T)\times_{\al,L}\bbN$ is again the Cuntz-Pimsner algebra $\sO_{\sM_L}$. This identification is obtained in our case and we wonder whether this is true in general.
\end{remark}

\subsection{$\ca$-envelopes}

In this subsection we examine the $\ca$-envelope of the non-selfadjoint parts of $\fA(A,\al)$ and $A \times_\al^1 \bbN$, and the form of $\sO_{\sM_L}$.

\subsubsection{The non-selfadjoint part of $\fA(A,\al)$ and $A \times_\al^1 \bbN$}

We remark that the automorphic extension of the (unital) automorphism $\al_\infty$ of $A_\infty$ to the multiplier algebra $\sM(A_\infty)$ will be denoted by the same symbol.

\begin{theorem}\label{T: unit}
Let $A \times_\al \sT_1^+ := \alg\{\pi_u(A), T_u\} \subseteq \fA(A,\al)$. Then the $\ca$-envelope of $A \times_\al \sT_1^+$ is the $\ca$-subalgebra $\ca(B, U)$ of $\sM(B) \rtimes_{\be} \bbZ$, where $B=(A/\sR_\al)_\infty$ and $\be=(\dot\al)_\infty$.
\end{theorem}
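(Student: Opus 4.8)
The plan is to realize $\cenv(A \times_\al \sT_1^+)$ by exhibiting a concrete $\ca$-cover and then verifying it is the \emph{smallest} one via a boundary/maximality argument. By Proposition \ref{P:R_al for fA} we may replace $(A,\al)$ by $(A/\sR_\al,\dot\al)$ and thus assume $\al$ is injective; this identifies the copy of $A$ inside $\fA(A,\al)$ with $A/\sR_\al$, so $B = A_\infty$ and $\be = \al_\infty$ after this reduction. First I would produce the canonical $*$-epimorphism $\ca(\pi_u(A),T_u) = \fA(A,\al) \twoheadrightarrow \ca(B,U)$: a covariant pair $(\pi,S)$ for $\fA(A,\al)$ is obtained from a representation $(\pi_\infty, U)$ of $\sM(B) \rtimes_\be \bbZ$ by setting $\pi = \pi_\infty|_A$ (via $A \to A_\infty \hookrightarrow \sM(A_\infty)$) and $S = U \pi_\infty(1)$, exactly as in the proof of Proposition \ref{P:com diag} and Proposition \ref{P:R_al for fA}; note $S$ is an isometry since $U$ is unitary and $\pi_\infty(1)$ is a projection with $U\pi_\infty(1)U^* = \pi_\infty(\be(1))$, and one checks $\pi(\al(x)) = S\pi(x)S^*$. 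This gives a $*$-epimorphism $\Psi: \fA(A,\al) \to \ca(B,U)$ whose restriction to $A \times_\al \sT_1^+$ I must show is a \emph{complete isometry} with \emph{maximal} (i.e.\ boundary) range representation.

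Next I would prove that $\Psi$ restricted to $A \times_\al\sT_1^+$ is completely isometric. The most economical route is to pick a suitable faithful covariant representation of $\fA(A,\al)$ and dilate it, or conversely to recognize $A \times_\al\sT_1^+$ inside a tensor-algebra picture: by Theorem \ref{T:Exel} / Theorem \ref{T:main 2} and Theorem \ref{T:*-iso 1}, in the injective unital situation $A \times_\al^1 \bbN$ is a Cuntz--Pimsner algebra of a Hilbert bimodule, and there is already machinery (\cite{KatsKribs06}) computing $\ca$-envelopes of tensor algebras. However the generator $T_u$ (not weighted by $\pi_u(1)$) is the genuinely new feature here, so the cleaner approach is direct: realize $A \times_\al \sT_1^+$ faithfully and completely isometrically in $\ca(B,U)$ by using the universal representation of $\sM(B)\rtimes_\be\bbZ$ built from the dilation $\sM_\infty \to$ (two-sided version), i.e.\ the standard inductive-limit dilation of the Toeplitz picture $(\rho,S)$ of Theorem \ref{T:*-iso 1} to a unitary $U$ on the dilation space. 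One then observes $\pi_u(x) T_u^n (T_u^*)^m \mapsto$ the corresponding word in $B$ and $U$, and that this assignment is a complete isometry on the non-selfadjoint algebra because compressing $U$, $\pi_\infty$ back down recovers $(\rho, S)$, and $(\rho, S)$ already sits completely isometrically inside $\fA(A,\al)$.

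Finally, for minimality I would show $\ca(B,U)$ has no non-trivial boundary ideal for $A \times_\al\sT_1^+$, equivalently (Lemma \ref{L:fful_c*-env}) that the image representation is maximal; since $\be = \al_\infty$ is an automorphism, $\sM(B)\rtimes_\be\bbZ$ is a genuine crossed product and $\ca(B,U)$ is a full corner/subalgebra with a gauge ($\bbT$) action, so any boundary ideal is gauge-invariant, hence induced from a $\be$-invariant ideal of $B$; such an ideal, intersected with $A/\sR_\al \subseteq B$ trivially by minimality of $\sR_\al$ and faithfulness of $A/\sR_\al \hookrightarrow B$ (noted after the direct-limit construction), and then being $\be$-invariant it must be $(0)$. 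The main obstacle I anticipate is the complete-isometry claim for the \emph{unweighted} generator $T_u$: one must be careful that $T_u$ does not act as an isometry on all of $H_u$ but only that $T_u\pi_u(1)$ does, so the dilation identifying $\fA(A,\al)$-representations with $\sM(B)\rtimes_\be\bbZ$-representations must track the auxiliary projection $\pi_\infty(1)$ correctly and show the resulting map is still isometric at every matrix level — this is where the ``weighting trick'' advertised in the introduction does the real work, and I would expect the authors to invoke it here.
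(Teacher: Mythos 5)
Your skeleton is the paper's (reduce to $\al$ injective via Proposition \ref{P:R_al for fA}, exhibit $\ca(B,U)$ as a $\ca$-cover through unitary covariant pairs of $(A_\infty,\al_\infty)$, then kill the \v{S}ilov ideal by gauge invariance), but two steps as written would fail. First, the generator map: you send $T_u$ to $S=U\pi_\infty(1)$ and claim $S$ is an isometry. It is not: when $\al$ is non-unital, $1_A$ becomes a proper projection $p=[1_A]$ in $B=(A/\sR_\al)_\infty$, so $(Up)^*(Up)=p\neq I$. In fact you have the picture backwards: in $\fA(A,\al)$ the generator $T_u$ \emph{is} an isometry by the very definition of the class $\sF_E$, and it is the weighted element $T_u\pi_u(1)$ that is only a partial isometry; the ``weighting trick'' is what is needed for $A_{\textup{nd}}\times_\al\sT_1^+\subseteq A\times_\al^1\bbN$ (Theorem \ref{T: not unit}), whose envelope is the \emph{corner} $p(B\rtimes_\be\bbZ)p$, not for the present statement. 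Here the correct assignment is simply $T_u\mapsto U$: any covariant unitary pair $(\pi,U)$ of $(A_\infty,\al_\infty)$ restricts to a pair in $\sF_E$, and then $\ca(\pi(A),U)=\ca(\pi(B),U)$ because the powers $U^{-n}\pi(x)U^{n}$ generate $\pi(B)$; using $Up$ instead lands you in the corner algebra rather than in $\ca(B,U)$.

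Second, the complete-isometry step is not established by your argument. What is needed is that \emph{every} pair $(\pi,T)\in\sF_E$ extends to a covariant unitary pair of $(A_\infty,\al_\infty)$ — this is Stacey's dilation \cite{Sta93}, which the paper invokes — so that the universal norm of $\fA(A,\al)$ (a supremum over all pairs) agrees on $A\times_\al\sT_1^+$ with the norm coming from $\sM(B)\rtimes_\be\bbZ$. You instead dilate the single Fock-type representation $(\rho,S)$ of Theorem \ref{T:*-iso 1}, which belongs to the Exel-system machinery and presupposes a transfer operator $L$ that is not part of the data here, and you then assert that $(\rho,S)$ ``already sits completely isometrically inside $\fA(A,\al)$'' — but that assertion is essentially the statement to be proved, so the argument is circular. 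Finally, in the minimality step your justifications are transposed: the \v{S}ilov ideal meets the copy of $A$ trivially \emph{because it is a boundary ideal} (the quotient must be isometric on $A\times_\al\sT_1^+\supseteq A$), not ``by minimality of $\sR_\al$''; the substantive point is that a nonzero gauge-invariant ideal of $\ca(B,U)$ must meet $A$, and since $\ca(B,U)$ properly contains $B\rtimes_\be\bbZ$ (it also contains $C^*(U)$), the standard bijection between gauge-invariant ideals and $\be$-invariant ideals of $B$ is not available off the shelf — the paper instead passes through the fixed-point algebra, essentiality of $B$ in $\sM(B)$, and conjugation by powers of $U$ down the direct limit to produce a nonzero element of the ideal inside $A$, yielding the contradiction.
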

\begin{proof}
It suffices to show it for $\al$ injective, i.e., $\sR_\al=(0)$. By \cite{Sta93} the pair $(\pi_u,T_u)$ extends to a covariant unitary pair $(\pi,U)$ of $(A_\infty, \al_\infty)$. In turn a pair $(\pi,U)$ defines a pair for $\fA(A,\al)$. Therefore the canonical embedding $\fA(A,\al) \rightarrow \sM(A_\infty) \rtimes_{\al_\infty} \bbZ$ defines a completely isometric homomorphism of $A \times_\al \sT_1^+$. It is immediate that $\ca(\pi(A),U) = \ca(\pi(A_\infty), U)$ since the powers $U^{-n} \pi(x) U^n$ generate $\pi(A_\infty)$. Thus $\ca(\pi(A_\infty), U)$ is a $\ca$-cover of $A \times_\al \sT_1^+$.

Let $\{\beta_z\}$ be the gauge action defined on $\sM(A_\infty) \rtimes_{\al_\infty} \bbZ$; then $A \times_\al \sT_1^+$ is $\beta_z$-invariant. Thus, if $\sJ$ is the \v{S}ilov ideal in $\ca(\pi(A_\infty), U)$ and it is non-trivial then it is also $\beta_z$-invariant. Therefore $\sJ$ intersects non trivially the fixed point algebra $\ca(A_\infty, I)$ of $\ca(\pi(A_\infty), U)$. Since $A_\infty$ is essential in $\sM(A_\infty)$, then $\sJ$ intersects $A_\infty$. Since $A_\infty$ is a direct limit there is an $x\in A_\infty$ such that $x\in \sJ$ and $\al_\infty^n(x) \in A$, for some $n$. But then $\pi(\al_\infty^n(x)) = U^n \pi(x) U^{-n} \in \sJ$. Therefore $\sJ$ intersects with $A$, hence with $A \times_\al \sT_1^+$, which is a contradiction.
\end{proof}

Recall that if $p$ is a projection in a $\ca$-algebra $\fA$ then the corner $p\fA p$ is called \emph{full} if the only ideal of $\fA$ containing $p\fA p$ is $\fA$.

\begin{theorem}\label{T: not unit}
Let $A_{\textup{nd}} \times_\al \sT_1^+:= \alg\{\pi_u(A), T_u\pi_u(1)\} \subseteq A \times_\al^1 \bbN$. Then the $\ca$-envelope of $A_{\textup{nd}} \times_\al \sT_1^+$ is a full corner of $B \rtimes_{\be} \bbZ$, where $B=(A/\sR_\al)_\infty$, $\be=(\dot\al)_\infty$.
\end{theorem}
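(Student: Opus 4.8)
The plan is to run the same argument as in Theorem \ref{T: unit}, but inside the genuinely non-unital crossed product $B \rtimes_\be \bbZ$ rather than $\sM(B)\rtimes_\be\bbZ$, and then to identify the relevant corner. As before, I reduce to the case $\al$ injective, so $\sR_\al=(0)$ and $B=A_\infty$. By Stacey's result \cite{Sta93}, a non-degenerate pair $(\pi_u, T_u\pi_u(1))$ — equivalently the pair $(\pi_u, T_u)$ with $\pi_u$ non-degenerate — dilates to a covariant unitary pair $(\pi,U)$ of $(A_\infty,\al_\infty)$, giving a canonical embedding $A\times_\al^1\bbN \hookrightarrow A_\infty \rtimes_{\al_\infty} \bbZ$, which is completely isometric on $A_{\textup{nd}}\times_\al \sT_1^+$. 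Set $p=[1_A]=\pi(1)\in A_\infty$ (a projection, now an honest element of $B$, not merely a multiplier). Since $T_u\pi_u(1)$ is sent to $Up$ and $\pi_u(x)$ to $\pi(x)=p\pi(x)p$ (as $x=1\cdot x\cdot 1$ in $A$), the image of $A_{\textup{nd}}\times_\al\sT_1^+$ lies in the corner $p\,(A_\infty\rtimes_{\al_\infty}\bbZ)\,p$.

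The first key step is to show this corner is a $\ca$-cover, i.e. that $p(A_\infty\rtimes_{\al_\infty}\bbZ)p = \ca(\pi(A), Up)$. One checks that $\ca(\pi(A),Up)$ contains $pU^{-n}\pi(x)U^n p$ for all $n$ and $x\in A$; since $\al_\infty^{-n}(x)$-type elements generate $A_\infty$ and $p$ is full in $A_\infty\rtimes_{\al_\infty}\bbZ$ (being a full projection already in $A_\infty$, because $A_\infty$ is a direct limit with connecting maps having $\al(1)$-compressions and $\be$ an automorphism — any ideal of $A_\infty$ containing $p$ contains $\be^n(p)$ for all $n$, hence all of $A_\infty$; then full in the crossed product follows), a standard spanning-and-approximation argument gives that $\ca(\pi(A),Up)$ exhausts the corner. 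So $\bigl(p(A_\infty\rtimes_{\al_\infty}\bbZ)p,\ \pi_u(x)\mapsto p\pi(x)p,\ T_u\pi_u(1)\mapsto Up\bigr)$ is a $\ca$-cover of $A_{\textup{nd}}\times_\al\sT_1^+$.

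The second key step — and the place I expect the real work — is showing the \v{S}ilov ideal $\sJ$ of this cover is trivial, via the gauge action, exactly as in Theorem \ref{T: unit} but with the extra bookkeeping forced by the corner. The gauge action $\{\be_z\}_{z\in\bbT}$ on $A_\infty\rtimes_{\al_\infty}\bbZ$ fixes $A_\infty$ and scales $U$; it restricts to the corner since $p$ is gauge-invariant, and $A_{\textup{nd}}\times_\al\sT_1^+$ is gauge-invariant. A non-trivial boundary ideal $\sJ$ would then be gauge-invariant, so it meets the fixed-point algebra $pA_\infty p$ non-trivially; since $pA_\infty p$ is a corner of the direct limit $A_\infty$ and $\sJ\cap pA_\infty p$ is an ideal of it, one produces $x\in\sJ\cap pA_\infty p$ with $\al_\infty^n(x)\in A$ for some $n$ (using that $A_\infty=\varinjlim A$ and that conjugation by $U$ implements $\al_\infty$, so $U^n\pi(x)U^{-n}=\pi(\al_\infty^n(x))$ stays in $\sJ$; one must also note $\al_\infty^n(x)=\al_\infty^n(pxp)$ is compressed appropriately so it lands in $\pi(A)\subseteq A_{\textup{nd}}\times_\al\sT_1^+$). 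This contradicts that $q_\sJ$ is completely isometric on $A_{\textup{nd}}\times_\al\sT_1^+\supseteq\pi_u(A)$. Hence $\sJ=(0)$ and $\cenv(A_{\textup{nd}}\times_\al\sT_1^+)$ is the full corner $p(A_\infty\rtimes_{\al_\infty}\bbZ)p = p(B\rtimes_\be\bbZ)p$, with $p=[1_A]$, which is the claimed description (the fullness of the corner being recorded along the way).
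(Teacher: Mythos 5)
Your proposal follows essentially the same route as the paper: reduce to $\al$ injective, realize $A_{\textup{nd}}\times_\al\sT_1^+$ inside the corner $p(A_\infty\rtimes_{\al_\infty}\bbZ)p$ with $p=[1_A]$ (the paper identifies this corner with $A\times_\al^1\bbN$ by citing \cite[Proposition 3.3]{Sta93}, where you instead give a direct spanning argument, which also works since $(Up)^n=U^np$ and $pU^{-n}\pi(x)U^np=p\,\pi(\al_\infty^{-n}(x))\,p$ already generate the corner), then kill the \v{S}ilov ideal by gauge invariance plus conjugation back into $A$, and finally prove fullness with an approximate-unit computation.

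Two justifications as written need tightening, though both are fixable by exactly the paper's devices. First, the parenthetical ``any ideal of $A_\infty$ containing $p$ contains $\be^n(p)$ for all $n$'' is not valid for ideals of $A_\infty$ alone: such ideals need not be $\be$-invariant (and the relevant projections are $\be^{-n}(p)$, the units of the later copies of $A$). What makes fullness work is that you are given an ideal $\sI$ of the crossed product containing the corner, so that $\sI\cap A_\infty$ is $\ad_U$-invariant; this is precisely what the paper's computation $\pi[0,x,\al(x),\dots]=\lim_i U^*\pi(e_i)\cdot\pi[x,\al(x),\dots]\cdot\pi(e_i)U\in\sI$ exploits. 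Fullness of $p$ in $A_\infty$ itself is neither needed nor clear, and it plays no role in the generation claim either. Second, in the \v{S}ilov step $\sJ$ is an ideal of the corner, not of $A_\infty\rtimes_{\al_\infty}\bbZ$, so ``$U^n\pi(x)U^{-n}$ stays in $\sJ$'' requires conjugating by an element of the corner: since $x=pxp$ one has $U^n\pi(x)U^{-n}=(U^np)\pi(x)(U^np)^*$ with $U^np$ in the corner (this is the paper's ``moving backwards with $\ad_{(U^np)^*}$''), and for $x$ approximately of the form $p\,\al_\infty^{-n}(y)\,p$ with $y\in A$ this lands approximately in $\pi(A)$, contradicting isometry of the quotient on $A$. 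With these two repairs your argument coincides with the paper's proof.
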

\begin{proof}
It suffices to show it for $\al$ injective, i.e., $\sR_\al=(0)$. By \cite[Proposition 3.3]{Sta93} (or usual dilation theory) we obtain that $p(A_\infty \rtimes_{\al_\infty} \bbZ)p \simeq A \times_\al^1 \bbN$ which we show is $\cenv(A_{\textup{nd}} \times_\al \sT_1^+)$. To this end, fix a pair $(\pi,U)$ that integrates to a faithful representation of $A_\infty \rtimes_{\al_\infty} \bbZ$.

As in the proof of Theorem \ref{T: unit}, we can use the same gauge-invariance argument to show that the \v{S}ilov ideal intersects with the fixed point algebra $pA_\infty p$ and consequently intersects with $pAp =A$, by moving backwards with $\ad_{(U^np)^*}$, which leads to a contradiction.

To prove that it is full, let an ideal $\sI$ of $A_\infty \rtimes_{\al_\infty} \bbZ$ containing the corner. Then $\sI$ contains a copy of $A$. If $(e_i)$ is a c.a.i. of $A_\infty$ then
\begin{align*}
\pi[0,x,\al(x),\dots] = \lim_i U^* \pi(e_i) \cdot \pi[x,\al(x),\dots] \cdot \pi(e_i) U \in \sI.
\end{align*}
Inductively $A_\infty \subseteq \sI$. Therefore $\pi(x)U^n = \lim_i \pi(x) \cdot \pi(e_i)U^n \in \sI$, for all $x\in A_\infty$. Thus $\sI$ contains the generators of $A_\infty \rtimes_{\al_\infty} \bbZ$, hence it is trivial.
\end{proof}

\subsubsection{The tensor algebra $\sT_{\sM_L}^+$}

By \cite{KatsKribs06} the $\ca$-envelope of any tensor algebra is the Cuntz-Pimsner algebra. Under our assumptions for $(A,\al,L)$ we show that $\sO_{\sM_L}$ is a corner of a usual crossed product. In addition it has a special form, which we will use in the multivariable case.

\begin{theorem}\label{T:corner}
Let an Exel system $(A, \al, L)$ that satisfies $(\dagger)$ and $L(1)=1$. Then $\sO_{\sM_L}$ is a full corner of $A_\infty \rtimes_{\al_\infty} \bbZ$. In particular $p(A_\infty \rtimes_{\al_\infty} \bbZ)p$ is the closed linear span of
\[
\pi(x_0) + \sum_{n=0}^k \pi(x_n)U^np + \sum_{n=1}^k pU^{-n} \pi(y_n), \text{ for } x_0, x_n, y_n \in A.
\]
\end{theorem}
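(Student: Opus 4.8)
The plan is to combine Theorem \ref{T:Exel} (which under $(\dagger)$ identifies $\sO(K_\al,\sM_L)$ with $\sO_{\sM_L}$ once $A$ embeds) with Theorem \ref{T:main 2} and Theorem \ref{T: not unit}. Indeed, when $(\dagger)$ holds and $L(1)=1$, relation $(\dagger)$ at $x=1$ forces $\al(1)=L(1)=1$, so $\al$ is injective and $\al\circ L=\id$; hence $\sR_\al=(0)$, $A=A/\sR_\al$, and by Theorem \ref{T:main 2} we have $A\rtimes_{\al,L}\bbN = \sO(K_\al,\sM_L)\simeq A\times_\al^1\bbN$. On the other hand, the proof of Theorem \ref{T: not unit} (appealing to \cite[Proposition 3.3]{Sta93}) shows that $A\times_\al^1\bbN \simeq p(A_\infty\rtimes_{\al_\infty}\bbZ)p$ with $p=[1_A]\in A_\infty$, and that this corner is full. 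Since $\sO_{\sM_L}$ is the $\ca$-envelope of $\sT_{\sM_L}^+$ by \cite{KatsKribs06}, and since $A\hookrightarrow\sO_{\sM_L}$ always, Theorem \ref{T:Exel} gives $\sO_{\sM_L}\simeq\sO(K_\al,\sM_L)\simeq p(A_\infty\rtimes_{\al_\infty}\bbZ)p$, proving the first assertion.

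For the explicit description of the corner, I would start from the faithful pair $(\pi,U)$ integrating $A_\infty\rtimes_{\al_\infty}\bbZ$, with $p=\pi([1_A])$. A generic element of $A_\infty\rtimes_{\al_\infty}\bbZ$ is a norm-limit of finite sums $\sum_{n\in\bbZ}\pi(a_n)U^n$ with $a_n\in A_\infty$, and compressing by $p$ on both sides yields norm-limits of $\sum_n p\pi(a_n)U^n p$. The key structural fact is that $U^n p = U^n\pi([1_A])$ and that $U$ implements $\al_\infty$, so that $p\,U^n\pi(a) = U^n\pi(\al_\infty^{-n}(1_A)\,a)$; combined with the fact that $A_\infty=\overline{\bigcup_n \al_\infty^{-n}(A)}$ as a direct limit (the image of $A$ at stage $n$ being $\al_\infty^{-n}(A)$), one shows $p\,U^n\pi(A_\infty)\,p\subseteq$ closure of $\{U^n\pi(x)p : x\in A\}$ for $n\ge 0$, and symmetrically $p\,\pi(A_\infty)U^{-n}p$ lands in the closure of $\{p\,U^{-n}\pi(y) : y\in A\}$ for $n\ge 1$. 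The $n=0$ term gives $p\pi(A_\infty)p$, and here the extra input is that $p\,\al_\infty^{-m}(x)\,p = x$ for $x\in A$ in the direct-limit picture (since $\al_\infty$ fixes $1_A$ and the connecting maps are $\al$), so $p\pi(A_\infty)p=\pi(A)$, accounting for the $\pi(x_0)$ term. Assembling these inclusions and noting the reverse inclusions are immediate, $p(A_\infty\rtimes_{\al_\infty}\bbZ)p$ is the closed linear span of the displayed elements.

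The main obstacle I anticipate is the bookkeeping in the second part: carefully tracking how the direct-limit identifications interact with the compression by $p$, in particular verifying that the ``positive-power'' part only needs coefficients $U^np$ (not $pU^np$ with a nontrivial left $p$) and that the $n=0$ part collapses to $\pi(A)$ rather than all of $\pi(A_\infty)$. This is exactly the kind of computation where one must be scrupulous about whether $U^n\pi(x)p$ or $p U^n\pi(x)p$ appears, using repeatedly that $pU^np = U^np = U^n\pi(\al_\infty^{-n}(1))$ and $\pi(\al_\infty^{-n}(1))\pi(x)=\pi(x)$ for $x\in\al_\infty^{-n}(A)$. Everything else is a direct citation of Theorems \ref{T:Exel}, \ref{T:main 2}, \ref{T: not unit} and \cite[Proposition 3.3]{Sta93}, \cite{KatsKribs06}.
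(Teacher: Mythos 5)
Your first assertion essentially follows the paper's route (identify $\sO_{\sM_L}\simeq\sO(K_\al,\sM_L)\simeq A\times_\al^1\bbN$ and quote Theorem \ref{T: not unit}/\cite[Proposition 3.3]{Sta93}), but it already contains a false deduction: $(\dagger)$ at $x=1$ together with $L(1)=1$ only gives $\al(1)=\al(1)^2$, i.e.\ $\al(1)$ is a projection; it does \emph{not} force $\al(1)=1$ or $\al\circ L=\id$. (If it did, $\al$ would be an automorphism, $A_\infty=A$, $p=1$, and the theorem would be trivial -- see the remark following Theorem \ref{T:main 2}; the entire point of the statement is the non-unital case.) What you actually need there is only injectivity of $\al$, which does follow from $L(1)=1$ via $L\circ\al=\id$, so $\sR_\al=(0)$; with that, and noting that the hypothesis of Theorem \ref{T:Exel} to check is $A\hookrightarrow\sO(K_\al,\sM_L)$ (which holds because $\sO(K_\al,\sM_L)\simeq A\times_\al^1\bbN$ and $A\hookrightarrow A\times_\al^1\bbN$ by Proposition \ref{P:R_al for fA}), the first part is fine.

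The error becomes fatal in the second part. The identities ``$\al_\infty$ fixes $1_A$'' and ``$p\,\al_\infty^{-m}(x)\,p=x$'' are false when $\al(1)\neq 1$: in the direct limit $\al_\infty(p)$ is the image of $\al(1)$, and $p\,\al_\infty^{-m}(x)\,p=L^m(x)$. The equality $pA_\infty p=A$ is true, but its proof is precisely where $(\dagger)$ enters, through the iterated relation $\al^m\circ L^m(x)=\al^m(1)x\al^m(1)$ (Remark \ref{R:multi lin}); your argument never invokes $(\dagger)$ in this part, yet the spanning statement genuinely depends on it. Moreover your intermediate inclusion $pU^n\pi(A_\infty)p\subseteq\overline{\spn}\{U^n\pi(x)p:x\in A\}$ places the $A$-coefficient on the wrong side of $U^n$ and is false in general: take $A=\sO_2$, $\al=S_1\,\cdot\,S_1^*$, $L=S_1^*\,\cdot\,S_1$ (which satisfies $(\dagger)$ and $L(1)=1$), let $a$ be the image of $y=S_2S_1^*$ in the first copy of $A$ inside $A_\infty$; then $pU\pi(a)p=\pi(y)Up$ is of the form displayed in the theorem, but it does not lie in $\overline{\spn}\{U\pi(x)p:x\in A\}=\overline{U\pi(\al(A))}$, since $y\notin\al(A)$. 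The correct bookkeeping compresses $p\pi(a)U^{\pm n}p$ and uses $pA_\infty p=A$ together with $\al_\infty^{n}(p)=\al^n(1)\in A$ to rewrite these monomials as $\pi(x)U^np$ and $pU^{-n}\pi(y)$ with $x,y\in A$. Alternatively, the paper's own proof bypasses the computation entirely: by Theorem \ref{T:Exel} the module $\sM_L$ is a Hilbert bimodule, so $\sO_{\sM_L}=\overline{\sT_{\sM_L}^+ +(\sT_{\sM_L}^+)^*}$, and transporting this through the isomorphism of the first part yields the displayed linear span at once.
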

\begin{proof}
The canonical identification $\sO_{\sM_L} \simeq A \times_\al^1 \bbN$ gives that $\sT_{\sM_L}^+$ is completely isometrically isomorphic to $A_{\textup{nd}} \times_\al \sT_1^+$. Hence the first part of the theorem follows by Theorem \ref{T: not unit} and \cite{KatsKribs06}. The second part follows readily by Theorem \ref{T:Exel}, since $\sM_L$ is a Hilbert bimodule. Indeed, in this case $\sO_{\sM_L} = \overline{\sT_{\sM_L}^+ + (\sT_{\sM_L}^+)^*}$.
\end{proof}

\begin{remark}\label{R:multi lin}
When $(A,\al,L)$ satisfies $(\dagger)$ and $L(1)=1$, then $\al^n\circ L^n(x)= \al^n(1) x \al^n(1)$, for all $x\in A$, hence $pA_\infty p =A$. Therefore the monomials $p\pi(x)U^np$ and $pU^{-n} \pi(x) p$, for $x\in A_\infty$, that span $p(A_\infty \rtimes_{\al_\infty} \bbZ)p$ are written as $\pi(x')\pi(\al^n(1))U^n$ and $U^{-n} \pi(\al^n(1)) \pi(x')$, for $x' \in A$. This gives an ad-hoc proof of Theorem \ref{T:corner} and we record it for further use in Theorem \ref{T:tens}.
\end{remark}

\section{Multivariable Case}\label{S:multi}

We wish to explore universal non-selfadjoint operator algebras that arise from dynamical systems, subject to covariant relations induced by a Cuntz family.
The representation theory of such objects was exploited by Cuntz \cite{Cun77}, Murphy \cite{Mur02}, Stacey \cite{Sta93}, and Laca \cite{Lac93} for \emph{the twisted crossed product} and \emph{the crossed product of multiplicity $n$}.

\subsection{The Twisted Crossed Product}\label{Ss:twisted}

Let $\al\in \Aut(A)$; then the usual crossed product $A \rtimes_\al \bbZ$ is defined, and we simply write $(A\rtimes_\al \bbZ)\otimes \sO_n$ for the unique tensor product $\ca$-algebra.
If $(\pi_u,U_u)$ is the universal unitary covariant pair for $(A,\al)$, then the \emph{twisted crossed product of $A$ by $\al$}, denoted by $A\rtimes_\al\sO_n$, is the $\ca$-subalgebra of $(A\rtimes_\al \bbZ)\otimes \sO_n$ generated by $\pi_u(A)$ and the Cuntz family $\{U_u\otimes S_{i}\}$.
By convention $\ca(\pi_u,U_u)$ commutes with $\sO_n$. As a consequence, the embedding of $\sO_n$ in $A \rtimes_\al \sO_n$ is not multiplicity free.

Let $(\pi,U)$ acting on $H_1$ such that $\ca(\pi,U) \simeq A\rtimes_\al \bbZ$ and let $\sO_n$ acting on $H_2$. Then $A\rtimes_\al \sO_n$ is $*$-isomorphic to the $\ca$-algebra in $\sB(H_1\otimes H_2)$ generated by $\pi(A)\otimes I_{H_1}$ and $\{U\otimes S_i\}$.
Therefore the twisted crossed product does not depend on the choice of the covariant pair $(\pi,U)$, but on the dynamical system $(A,\al)$, in accordance with the original definition \cite{Cun77}. We remark that the notation $A \rtimes_\al \sO_n$ is due to Stacey \cite{Sta93} because of this fact.

\subsection{Multiplicity $n$ Crossed Products}\label{S:multi n sta}

The \emph{multiplicity $n$ crossed product} is the universal $\ca$-algebra generated by $\pi(A)T_\nu T_\mu^*$ such that
\begin{enumerate}
\item $\pi$ is a non-degenerate representation of $A$,
\item $\{T_i\}$ is a Toeplitz-Cuntz family, and
\item $\pi(\al(x))= \sum_{i=1}^n T_i \pi(x) T_i^*$, for all $x\in A$,
\end{enumerate}
and it is denoted by $A\times_\al^n \bbN$ \cite{Sta93}. It is immediate, due to non-degeneracy, that $A\times_\al^n \bbN$ is generated by $\pi(A)$ and $\{T_i\}$. Analogous to the one variable case we remove non-degeneracy by ``weighting'' the isometries $T_i$.

\begin{proposition}\label{P:multi sta}
The multiplicity $n$ crossed product $A \times_\al^n \bbN$ is $*$- isomorphic to the $\ca$-subalgebra $\ca(\pi_u(A), \{T_{u,i}\pi_u(1)\})$ of the universal $\ca$-algebra $\ca(\pi_u(A), \{T_{u,i}\})$ relative to $(\pi,\{T_i\})$ such that
\begin{enumerate}
\item $\pi$ is a representation of $A$,
\item $\{T_i\}$ is a Toeplitz-Cuntz family, and
\item $\pi(\al(x))= \sum_{i=1}^n T_i \pi(x) T_i^*$, for all $x\in A$.
\end{enumerate}
In particular, $A \times_\al^n \bbN$ coincides with $\ca(\pi_u(A), \{T_{u,i}\})$ if and only if $\al(1)=1$.
\end{proposition}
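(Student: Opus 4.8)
The plan is to mirror the one-variable argument from Proposition~\ref{P:sta-exe}, replacing the single isometry $T$ by the Toeplitz-Cuntz family $\{T_i\}$, with the projection $\pi_u(1)$ again playing the distinguishing role. Write $\sB:=\ca(\pi_u(A),\{T_{u,i}\})$ for the large universal algebra and $\sB_0:=\ca(\pi_u(A),\{T_{u,i}\pi_u(1)\})$ for its subalgebra. The key structural fact, to be established first, is that $\pi_u(1)$ is a unit for $\sB_0$. Indeed, from the covariance relation applied to $x=1$ one gets $\pi_u(\al(1))=\sum_i T_{u,i}\pi_u(1)T_{u,i}^*$, and since each $T_{u,i}$ is an isometry, $T_{u,i}^*\pi_u(\al(1))=\pi_u(1)T_{u,i}^*$ and dually $\pi_u(\al(1))T_{u,i}=T_{u,i}\pi_u(1)$; hence
\[
\pi_u(1)\cdot T_{u,i}\pi_u(1)=\pi_u(1)\pi_u(\al(1))T_{u,i}=\pi_u(\al(1))T_{u,i}=T_{u,i}\pi_u(1),
\]
using $\pi_u(1)\pi_u(\al(1))=\pi_u(\al(1))$. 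So $\pi_u(1)$ is a unit for the generators of $\sB_0$, hence for $\sB_0$.

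Next I would build the two canonical $*$-epimorphisms. For $\Phi\colon A\times_\al^n\bbN\to\sB_0$: compress everything to $K:=\pi_u(1)H_u$. On $K$ the restriction $\pi_u|_K$ is non-degenerate for $A$, the operators $T_{u,i}\pi_u(1)|_K$ form a Toeplitz-Cuntz family on $K$ (orthogonal ranges and subunital sum are inherited; that they are isometries on $K$ follows because $\pi_u(1)$ is the unit of $\sB_0$, so $(T_{u,i}\pi_u(1))^*(T_{u,i}\pi_u(1))=\pi_u(1)$), and the covariance relation $\pi_u(\al(x))=\sum_i (T_{u,i}\pi_u(1))\pi_u(x)(T_{u,i}\pi_u(1))^*$ holds on $K$ after compressing the relation $\pi_u(\al(x))=\sum_i T_{u,i}\pi_u(x)T_{u,i}^*$ and using $\pi_u(1)T_{u,i}^*=T_{u,i}^*\pi_u(\al(1))$ together with $\pi_u(\al(1))\pi_u(\al(x))=\pi_u(\al(x))$. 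Since the identity representation of $\sB_0$ restricted to $K$ is faithful (as $\pi_u(1)$ is the unit of $\sB_0$), this data yields by universality a $*$-epimorphism $\Phi\colon A\times_\al^n\bbN\to\sB_0$ sending generators to generators. For $\Psi\colon \sB\to A\times_\al^n\bbN$: any pair $(\pi,\{T_i\})$ satisfying (1)--(3) that generates $A\times_\al^n\bbN$ with $\pi$ non-degenerate is in particular a pair for the larger class, so universality of $\sB$ gives $\Psi\colon\sB\to A\times_\al^n\bbN$ with $\Psi(\pi_u(x))=\pi(x)$ and $\Psi(T_{u,i})=T_i$. Then $\Psi(T_{u,i}\pi_u(1))=T_i\pi(1)=T_i$ by non-degeneracy, so $\Psi$ restricted to $\sB_0$ is still onto $A\times_\al^n\bbN$. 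A direct check on generators gives $\Psi\circ\Phi=\id$, so $\Phi$ is injective, proving $A\times_\al^n\bbN\simeq\sB_0$.

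For the ``in particular'' clause: if $\al(1)=1$ then $\pi_u(1)$ commutes with each $T_{u,i}$ (from $\pi_u(\al(1))T_{u,i}=T_{u,i}\pi_u(1)$, i.e.\ $\pi_u(1)T_{u,i}=T_{u,i}\pi_u(1)$), so $\pi_u(1)$ is central; moreover $\pi_u(1)$ is then a unit for all of $\sB$, so $\pi_u$ may be taken non-degenerate, forcing $\sB_0=\sB$ and hence $\sB\simeq A\times_\al^n\bbN$. Conversely, if $\sB\simeq A\times_\al^n\bbN$ via the canonical map (equivalently $\sB_0=\sB$, so $\pi_u(1)$ is a unit for $\sB$), then $\pi_u(1)T_{u,i}=T_{u,i}\pi_u(1)$ for each $i$, whence, since $T_{u,i}$ is an isometry,
\[
\pi_u(\al(1))=\sum_i T_{u,i}\pi_u(1)T_{u,i}^*=\sum_i \pi_u(1)T_{u,i}T_{u,i}^*=\pi_u(1)\sum_i T_{u,i}T_{u,i}^*=\pi_u(1)\pi_u(\al(1))=\pi_u(1).
\]
Hmm --- one must be slightly careful that $\pi_u$ is faithful here; since $\al$ need not be injective we instead argue via a representation in which $A$ embeds (e.g.\ a pair coming from $A_\infty\rtimes_{\al_\infty}\sO_n$, in which $A/\sR_\al$ sits), concluding $\al(1)-1\in\sR_\al$, and then since $\al(\al(1)-1)=\al(1)\cdot\al(1)-\al(1)$... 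The cleanest fix is to reduce at the outset to $\al$ injective exactly as in Proposition~\ref{P:R_al for fA} (both universal algebras only see $A/\sR_\al$), so that $\pi_u$ is faithful and the last equality gives $\al(1)=1$. The main obstacle I anticipate is this faithfulness/non-injectivity point in the converse, together with the routine but slightly fiddly verification that the compressed family on $K$ genuinely satisfies the defining relations of $A\times_\al^n\bbN$; the rest is a direct transcription of the $n=1$ argument.
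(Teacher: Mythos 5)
Your treatment of the main assertion is correct and is exactly the paper's route: the paper derives $\pi_u(\al(x))T_{u,i}=T_{u,i}\pi_u(x)$ from the covariance relation and the orthogonality of the ranges, hence $\pi_u(1)T_{u,i}\pi_u(1)=T_{u,i}\pi_u(1)$, and then completes the proof ``as in Propositions \ref{P:sta-exe} and \ref{P:criterion}''; your verification that $\pi_u(1)$ is a unit for $\ca(\pi_u(A),\{T_{u,i}\pi_u(1)\})$, the compression to $K=\pi_u(1)H_u$, the two universal maps $\Phi,\Psi$ and the identity $\Psi\circ\Phi=\id$ are precisely that argument, and the details you supply (isometries, orthogonal ranges and covariance on $K$, non-degeneracy, faithfulness of the compression) all check out.

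The gap is in the ``in particular'' clause. In the converse direction your displayed chain is circular at its last step: the equality $\pi_u(1)\pi_u(\al(1))=\pi_u(1)$ is exactly the claim $\pi_u(\al(1))=\pi_u(1)$ you are trying to establish, since multiplicativity of $\pi_u$ gives $\pi_u(1)\pi_u(\al(1))=\pi_u(\al(1))$ for free; and the preceding equality $\pi_u(1)\sum_i T_{u,i}T_{u,i}^*=\pi_u(1)\pi_u(\al(1))$ merely restates what the first three equalities already gave, so commutation alone returns the tautology $\pi_u(\al(1))=\pi_u(\al(1))$ and nothing more. The argument modelled on Proposition \ref{P:criterion} has to exploit the full unit property of $\pi_u(1)$ on $\sB$ --- the absorption $\pi_u(1)T_{u,i}=T_{u,i}=T_{u,i}\pi_u(1)$, not mere commutation --- together with the intertwining $\pi_u(x)T_{u,i}^*=T_{u,i}^*\pi_u(\al(x))$ and $T_{u,i}^*T_{u,i}=I$, and then, as you rightly anticipate, faithfulness of $\pi_u|_A$ (after reducing to $\al$ injective, as in Proposition \ref{P:R_al for fA}) to pass from $\pi_u(\al(1))=\pi_u(1)$ to $\al(1)=1$. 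Symmetrically, in the forward direction the inference ``$\pi_u(1)$ commutes with each $T_{u,i}$, hence is a unit for all of $\sB$'' is a non sequitur: a central projection need not absorb the isometries $T_{u,i}$ (it does so only on the part of each representation where $\pi$ acts non-degenerately), and indeed commutation with the $T_{u,i}$ already holds whenever $\al(1)=1$ without settling the unit question. The paper's Proposition \ref{P:criterion} asserts the unit property directly from $\al(1)=1$ and does not attempt to deduce it from centrality, so to mirror the paper you should justify (or cite) that step as such --- in effect, you must argue that the degenerate part of an arbitrary pair $(\pi,\{T_i\})$ does not obstruct replacing the weighted generators $T_{u,i}\pi_u(1)$ by the $T_{u,i}$ --- rather than route it through centrality.
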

\begin{proof}
If $\pi_u(\al(x))= \sum_{i=1}^n T_{u,i} \pi_u(x) T_{u,i}^*$, then $\pi_u(\al(x))T_{u,i} = T_{u,i} \pi_u(x)$, for all $x\in A$ and $i=1,\dots,n$. Hence $\pi_u(1) T_{u,i} \pi(1) = T_{u,i} \pi_u(1)$ for all $i=1,\dots,n$. The proof is completed by arguments similar to those of Propositions \ref{P:sta-exe} and \ref{P:criterion}.
\end{proof}

\begin{remark}\label{R:cex 1}
The set of representations of $A_\infty \rtimes_{\al_\infty} \sO_n$ are sufficient to obtain an injective embedding of $A$ in $A\times_\al^n \bbN$.
However, they may not be enough to obtain the norm of $A\times_\al^n \bbN$. In other words the canonical $*$-homomorphism $A\times_\al^n \bbN \rightarrow A_\infty \rtimes_{\al_\infty} \sO_n$ may not be injective, as we show in the counterexample in Subsection \ref{S:counter}. Therefore there is no analogue of \cite[Proposition 3.3]{Sta93} for $n>1$.
\end{remark}

\subsection{The Semicrossed Product $A \times_\al \sT_n^+$}\label{S:multi spr}

In what follows we construct an operator algebra $A \times_\al \sT_n^+$ starting with a Banach algebra $\ell^1(A,\al, \bbF_n^+)$. This universal operator algebra should not be mistaken with the objects examined in \cite{DavKatsMem, DunPet10, Ful11}.

Let the semigroup $\bbF_n^+$ and the linear tensor product $A\otimes c_{00}(\bbF_n^+)= \spn\{ x\otimes \de_\mu: x\in A, \mu\in \bbF_n^+\}$. The multiplication rule that turns $A \otimes c_{00}(\bbF_n^+)$ into an algebra is
\begin{align*}
(x\otimes \de_i) \cdot (y\otimes \de_j)= (x\al(y))\otimes \de_{ij},
\end{align*}
for $i, j \in \{1, \dots, n\}$.
Define the $\ell^1$-norm
\begin{align*}
\left| \sum_\mu x_\mu \otimes \de_\mu\right|_1 = \sum_\mu
\nor{x_\mu}_A,
\end{align*}
where $\mu$ is taken over a finite subset of finite paths in $\bbF_n^+$, and let $\ell^1(A,\al, \bbF_n^+)$ be the Banach algebra that comes from the $\ell^1$-completion of $A\otimes c_{00}(\bbF_n^+)$. Note that
$
\ell^1(A,\al, \bbF_n^+)= \overline{\spn\{ x\otimes \de_\mu: x\in A, \mu\in
\bbF_n^+\}}^{|\cdot|_1}.
$
When $\al(1)=1$, then $1\otimes \de_\emptyset$ is the unit for $\ell^1(A,\al, \bbF_n^+)$, and $\ell^1(A,\al, \bbF_n^+)= \overline{\alg}^{|\cdot|_1}\{x\otimes \de_\emptyset, 1 \otimes \de_\mu: x\in A, \mu \in \bbF_n^+\}$.

Consider the class
\begin{align*}
\sF= \big\{ (\pi, \{T_i\}_{i=1}^n) \mid \pi\colon A \rightarrow \sB(H),\, T_i \in
\sT_n^+,\, \pi(\al(x))=\sum_{i=1}^nT_i\pi(x)T_i^*\big\},
\end{align*}
If $(\pi,\{T_i\}) \in \sF$ then $T_\mu \pi(x)= \pi(\al^{|\mu|}(x)) T_\mu$, for all $x\in A$, and $\mu \in \bbF_n^+$.
Moreover $(\pi, \{T_i\}_{i=1}^n)$ defines a representation $\rho\colon \ell^1(A,\al, \bbF_n^+) \rightarrow \sB(H)$, such that $\rho(x\otimes \de_\mu)= \pi(x)T_\mu$. Indeed, $\rho$ is $|\cdot|_1$-contractive on $A\otimes c_{00}(\bbF_n^+)$ (and extends to $\ell^1(A,\al, \bbF_n^+)$) since
\begin{align*}
\nor{\rho(\sum_\mu x_\mu \otimes \de_\mu)}
\leq \sum_\mu \nor{\rho(x_\mu) T_\mu}
\leq \sum_\mu \nor{x_\mu}_A
= \left| \sum_\mu x_\mu \otimes \de_\mu\right|_1.
\end{align*}
In this case we say that $\rho$ \emph{integrates} $(\pi,\{T_i\})$.

\begin{definition}\label{D:Twisted scp}
We denote by $A \times_\al \sT_n^+$ the universal operator algebra relative to the class $\sF$ that is generated by $\pi(x)$ and $T_\mu$, for $x\in A$ and $\mu \in \bbF_n^+$.
\end{definition}

\begin{remark}
The semicrossed product $A \times_\al \sT_n^+$ constructed here differs in general from the tensor algebra relative to $n$ $*$-endomorphisms $\al_i$ of $A$, examined in \cite{MS, DavKatsMem, KK}. Nevertheless, when $\al_i$ coincide with an automorphism $\al$, then they are completely isometrically isomorphic.
\end{remark}

The class $\sF$ of representations is non-empty. Moreover, the existence of such representations is equivalent to $A_\infty \neq (0)$ \cite[Proposition 2]{Sta93}. We present such an example, which we prove that has Fourier co-efficients.

\begin{example}\label{E:rpn}
Let $(\pi,U)$ be the left regular representation of the dynamical system $(A_\infty,\al_\infty)$ acting on $H$. Let $\sK= H \otimes K$, where $K$ is a Hilbert space on which $\sO_n$ acts (faithfully). If $\{S_i\}$ is a Cuntz family on $K$, let $T_i=U \otimes S_i$ and $\widehat{\pi}=\pi\otimes I_K$. Then $\{T_i\}$ is also a Cuntz family. Moreover if $q\colon A \rightarrow A_\infty$ is the canonical $*$-homomorphism, then
\begin{align*}
\sum_{i=1}^n T_i \widehat{\pi}\circ q(c) T_i^*
&= \sum_{i=1}^n (U\pi[c,\al(c),\dots]U^*) \otimes S_iS_i^*\\
&= \pi\circ \al_\infty [c,\al(c),\dots] \otimes I
 = \widehat{\pi}\circ q(\al(c)),
\end{align*}
for $i=1,\dots,n$. Hence $(\widehat{\pi}\circ q, \{T_i\})$ defines a pair in $\sF$. Note that $I_H \otimes S_i \in \widehat{\pi}(A_\infty)'$; therefore, for a finite sum $\sum_{\mu \in F} \widehat{\pi}(x_\mu)T_\mu$, we define
\begin{align*}
P_\nu(\sum_{\mu \in F} \widehat{\pi}\circ q(x_\mu)T_\mu)
& := (I\otimes S_\nu^*) \cdot \left(\sum_{\mu \in F} \widehat{\pi}\circ q(x_\mu)T_\mu\right) \cdot (U^{-|\nu|}\otimes I_K)\\
&  =\sum_{\mu \in F} \widehat{\pi}\circ q(x_\mu) \cdot (I\otimes S_\nu^*)T_\mu(U^{-|\nu|}\otimes I_K)\\
&  = \widehat{\pi}\circ q(x_\nu)\cdot (U^{|\nu|} \otimes I_K)(U^{-|\nu|}\otimes I_K)
   = \widehat{\pi}\circ q(x_\nu),
\end{align*}
since $T_{\mu} = U^{|\mu|}\otimes S_{\mu}$.
Thus $P_\nu(\sum_{\mu \in F} \widehat{\pi}\circ q(x_\mu)T_\mu)=0$, for every $\nu \in F$, if and only if $\widehat\pi\circ q(x_\nu)=0$ for every $\nu\in F$, if and only if $x_\nu \in \sR_\al$ for every $\nu\in F$, if and only if $\sum_{\mu \in F} \widehat{\pi}\circ q(x_\mu)T_\mu=0$.
\end{example}

\begin{proposition}\label{P:kernel}
The kernel $\sN$ of the seminorm $\om_1$ relative to $\sF$ coincides with $\ell^1(\sR_\al, \al, \bbF_n^+)$.
\end{proposition}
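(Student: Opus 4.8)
The plan is to prove the two inclusions $\ell^1(\sR_\al,\al,\bbF_n^+)\subseteq\sN$ and $\sN\subseteq\ell^1(\sR_\al,\al,\bbF_n^+)$ separately. For the first, I would start from the remark that every covariant pair annihilates the radical ideal: if $(\pi,\{T_i\})\in\sF$ and $x\in\ker\al^m$, then choosing any word $\mu$ with $|\mu|=m$ and iterating the covariance relation yields $T_\mu\pi(x)=\pi(\al^{m}(x))T_\mu=0$, and since $T_\mu=T_{\mu_1}\cdots T_{\mu_m}$ is an isometry this forces $\pi(x)=T_\mu^*T_\mu\pi(x)=0$. As $\sR_\al=\overline{\bigcup_m\ker\al^m}$ and $\pi$ is continuous, $\pi|_{\sR_\al}=0$. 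Hence for any pair in $\sF$ the integrated representation $\rho$ satisfies $\rho(x\otimes\de_\mu)=\pi(x)T_\mu=0$ whenever $x\in\sR_\al$, so $\spn\{x\otimes\de_\mu: x\in\sR_\al,\ \mu\in\bbF_n^+\}\subseteq\sN$. Since each $\rho$ is $|\cdot|_1$-contractive we have $\om_1\le|\cdot|_1$, so $\sN=\ker\om_1$ is closed in $|\cdot|_1$, and the first inclusion follows by taking closures.

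For the reverse inclusion I would use the concrete representation of Example \ref{E:rpn}. Write a generic element of $\sN$ as $a=\sum_\mu x_\mu\otimes\de_\mu$ with $\sum_\mu\nor{x_\mu}_A<\infty$; the coefficients $x_\mu\in A$ are uniquely determined by $a$. Since $a\in\ker\om_1$, every representation integrating a pair in $\sF$ kills $a$, in particular the representation $\rho_0$ integrating $(\widehat{\pi}\circ q,\{U\otimes S_i\})$ of Example \ref{E:rpn}, so that $\rho_0(a)=\sum_\mu\widehat{\pi}\circ q(x_\mu)\,T_\mu=0$, the series converging in norm. I would then recover the coefficients as in Example \ref{E:rpn}: for each $\nu\in\bbF_n^+$ the completely contractive map
\[
b\ \longmapsto\ (\id_{\sB(H)}\otimes E_{\sO_n})\big[(I\otimes S_\nu^*)\,b\,(U^{-|\nu|}\otimes I_K)\big],
\]
with $E_{\sO_n}$ the canonical conditional expectation of $\sO_n$ onto its fixed-point subalgebra, passes through the norm-convergent series term by term; on the $\mu$-th term it produces $\widehat{\pi}\circ q(x_\mu)\cdot\big(U^{|\mu|-|\nu|}\otimes E_{\sO_n}(S_\nu^*S_\mu)\big)$, and since $S_\nu^*S_\mu$ has nonzero $E_{\sO_n}$-image exactly when $\mu=\nu$ (in which case the surviving factor is $U^0\otimes I=I$), the map sends $\rho_0(a)$ to $\widehat{\pi}\circ q(x_\nu)$. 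Thus $\widehat{\pi}\circ q(x_\nu)=0$ for all $\nu$; as $\widehat{\pi}$ is faithful on $A_\infty$ and $\ker q=\sR_\al$, this forces $x_\nu\in\sR_\al$ for every $\nu$, i.e. $a\in\ell^1(\sR_\al,\al,\bbF_n^+)$.

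The first inclusion is routine bookkeeping; the content is in the second, and the main obstacle is exactly the claim that a single explicit covariant representation — the weighted left regular representation of Example \ref{E:rpn} — already separates all coefficients of an element of $\ell^1(A,\al,\bbF_n^+)$. Two technical points need care: (i) the coefficient-extracting maps, being contractive, commute with the $\ell^1$-convergent (rather than merely finite) sum; and (ii) the compression by $I\otimes S_\nu^*$ on the left and $U^{-|\nu|}\otimes I_K$ on the right, followed by the expectation onto the $\sO_n$-core, genuinely isolates the one coefficient $x_\nu$ — this is where one uses the Cuntz relations on equal-length words to kill the same-degree off-diagonal terms and the gauge grading of $\sO_n$ to kill the remaining degrees, together with $\al_\infty\in\Aut(A_\infty)$ and the faithfulness of the left regular representation of $A_\infty$ so that $\ker(\widehat{\pi}\circ q)=\sR_\al$.
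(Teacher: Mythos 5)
Your proof is correct and follows the same strategy as the paper: the easy inclusion via $\pi|_{\sR_\al}=0$ for every pair in $\sF$, and the reverse inclusion by feeding an element of $\sN$ into the concrete representation of Example \ref{E:rpn} and extracting Fourier coefficients by compressing with $I\otimes S_\nu^*$ on the left and $U^{-|\nu|}\otimes I_K$ on the right. Two points where you are more careful than the paper are worth noting. First, you treat a general $\ell^1$-convergent sum directly, passing the (contractive, hence norm-continuous) coefficient maps through the series, whereas the paper argues for finitely supported elements in reduced form and then invokes $\ell^1$-closedness. Second, and more substantively, you insert the conditional expectation $E_{\sO_n}$ onto the gauge-fixed core before reading off the coefficient: this is a genuine refinement, since the map $P_\nu$ of Example \ref{E:rpn}, as computed there, only kills $T_\mu$ when neither of $\mu,\nu$ is a prefix of the other ($S_\nu^*S_\mu$ equals $S_{\mu'}$ or $S_{\nu'}^*\neq 0$ in the prefix cases), so some extra device -- your expectation, or an induction on the prefix order -- is needed to isolate $\widehat{\pi}\circ q(x_\nu)$ exactly. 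With that expectation in place, the equal-length Cuntz relations and the grading do the job precisely as you say, and the conclusion $x_\nu\in\sR_\al$ follows from faithfulness of the regular representation on $A_\infty$ together with $\ker q=\sR_\al$, just as in the paper.
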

\begin{proof}
First observe that
\begin{align*}
\ell^1(\sR_\al, \al, \bbF_n^+)= \overline{\spn}^{|\cdot|_1}\{ x\otimes \de_\mu: x\in
\sR_\al, \mu\in \bbF_n^+\}
\end{align*}
can be embedded as an ideal in $\ell^1(A, \al, \bbF_n^+)$ and that it is contained in $\sN$.

For the converse, assume that $X=\sum_{\mu \in F} x_\mu \otimes \de_\mu$ is in $\sN$, where $F$ is finite, and the sum is written in reduced form with respect to the words $\mu$. Let $\rho$ be the representation of Example \ref{E:rpn}. By definition of $\sN$ we have that $\rho(X)=\sum_{\mu \in F} \widehat{\pi}\circ q(x_\mu)T_\mu =0$; then $x_\nu \in \sR_\al$, by Example \ref{E:rpn}. Hence $X \in \ell^1(\sR_\al,\al,\bbF_n^+)$. To end the proof, note that an $X\in \sN$ is the $\ell^1$-sum of such elements and that $\ell^1(\sR_\al,\al,\bbF_n^+)$  is $\ell^1$-closed.
\end{proof}

Analogous to the one variable case \cite[Proposition 3.4]{Kak11}, we can always assume that $(A,\al)$ is injective.

\begin{proposition}\label{P:only inj n}
The operator algebras $A \times_\al \sT_n^+$ and $(A/\sR_\al)
\times_{\dot\al} \sT_n^+$ are completely isometrically isomorphic.
\end{proposition}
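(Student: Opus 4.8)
The plan is to run the one-variable argument of Proposition~\ref{P:R_al for fA} in the present setting, the crucial point being that every pair in the defining class $\sF$ automatically annihilates the radical ideal $\sR_\al$. First I would record that if $(\pi,\{T_i\})\in\sF$ and $x\in\ker\al^{n}$, then picking any word $\mu$ of length $n$ and using $T_\mu\pi(x)=\pi(\al^{|\mu|}(x))T_\mu$ we get $T_\mu\pi(x)=\pi(\al^{n}(x))T_\mu=0$; since $T_\mu$ is an isometry (a product of isometries), this forces $\pi(x)=0$. Hence $\pi$ vanishes on $\bigcup_n\ker\al^{n}$ and, by norm-continuity, on its closure $\sR_\al$. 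Consequently $\pi=\dot\pi\circ q_{\sR_\al}$ for a $*$-representation $\dot\pi$ of $A/\sR_\al$, and since $\dot\pi(\dot\al(x+\sR_\al))=\pi(\al(x))=\sum_i T_i\pi(x)T_i^*=\sum_i T_i\dot\pi(x+\sR_\al)T_i^*$, the pair $(\dot\pi,\{T_i\})$ belongs to the class $\dot\sF$ attached to $(A/\sR_\al,\dot\al)$. Conversely, composing any $(\sigma,\{T_i\})\in\dot\sF$ with $q_{\sR_\al}$ produces a pair in $\sF$. Thus $(\pi,\{T_i\})\mapsto(\dot\pi,\{T_i\})$ is a bijection between the two representation classes which keeps the Toeplitz--Cuntz family fixed.

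Next I would lift this correspondence to the algebraic level. The $*$-homomorphism $q_{\sR_\al}\colon A\to A/\sR_\al$ intertwines $\al$ and $\dot\al$, so it induces a surjective contractive algebra homomorphism $q\colon \ell^1(A,\al,\bbF_n^+)\to\ell^1(A/\sR_\al,\dot\al,\bbF_n^+)$ determined by $x\otimes\de_\mu\mapsto (x+\sR_\al)\otimes\de_\mu$; its kernel is $\ell^1(\sR_\al,\al,\bbF_n^+)$, which by Proposition~\ref{P:kernel} is exactly $\sN=\ker\om_1$. Hence $q$ descends to an algebra isomorphism $\overline q\colon\ell^1(A,\al,\bbF_n^+)/\sN\to\ell^1(A/\sR_\al,\dot\al,\bbF_n^+)$. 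Under the bijection of the previous paragraph a pair $(\pi,\{T_i\})$ and its quotient $(\dot\pi,\{T_i\})$ integrate to the same operators, since on generators $\rho(x\otimes\de_\mu)=\pi(x)T_\mu=\dot\pi(x+\sR_\al)T_\mu=\dot\rho\big((x+\sR_\al)\otimes\de_\mu\big)$ and both $\rho$ and $\dot\rho$ are homomorphisms. Therefore the matricial seminorms $\{\om_k\}$ defining $A\times_\al\sT_n^+$ and the matricial seminorms $\{\dot\om_k\}$ (relative to $\dot\sF$) defining $(A/\sR_\al)\times_{\dot\al}\sT_n^+$ are carried onto one another by $\overline q$. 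Since each of the two semicrossed products is, by construction, the completion of the corresponding quotient algebra in these seminorms, $\overline q$ extends to a completely isometric isomorphism $A\times_\al\sT_n^+\to(A/\sR_\al)\times_{\dot\al}\sT_n^+$ taking $\pi_u(x)$ to the image of $x+\sR_\al$ and each generator $T_{u,\mu}$ to the corresponding generator, which is what we want.

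The step I expect to require the only genuine input, rather than bookkeeping, is the assertion that the class $\sF$ is rich enough that passing to $A/\sR_\al$ does not shrink any of the seminorms; equivalently, that $\ker\om_1$ is no larger than $\ell^1(\sR_\al,\al,\bbF_n^+)$. This is precisely Proposition~\ref{P:kernel}, whose proof produces, via the left-regular pair of Example~\ref{E:rpn}, a single representation detecting exactly the coefficients lying in $\sR_\al$. Everything else — the factorization $\pi=\dot\pi\circ q_{\sR_\al}$, the matching of amplifications, and the identification of completions — is routine once that fact is in hand, exactly as in the one-variable Proposition~\ref{P:R_al for fA}.
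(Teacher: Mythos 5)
Your proof is correct and follows essentially the route the paper intends: the paper omits a proof of this proposition (citing only the one-variable analogue), and your argument supplies it exactly as expected — the isometry trick shows every pair in $\sF$ annihilates $\sR_\al$, giving a bijection with the class for $(A/\sR_\al,\dot\al)$ under which integrated representations coincide, so the matricial seminorms match and the completions are completely isometrically isomorphic, with Proposition \ref{P:kernel} identifying the kernel. (A small remark: Proposition \ref{P:kernel} is convenient but not the essential input — the seminorm identity $\om_k(b)=\dot\om_k(q(b))$ already follows from the factorization of representations, and it alone forces $\sN=q^{-1}(\dot\sN)$, so the induced map on the quotients is automatically well defined and completely isometric.)
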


We mention that the unit $1\in A$ may not be the unit of $A \times_\al \sT_n^+$, since $A$ is not represented non-degenerately. Below we give the exact non-selfadjoint analogue of $A \times_\al^n \bbN$.

\begin{definition}
Let $A_{\textup{nd}} \times_\al \sT_n^+$ be the universal operator algebra relative to the class
\begin{align*}
\sF_{\text{nd}}= \big\{ (\pi, \{T_i\}_{i=1}^n) \in \sF \mid \pi \text{ is non-degenerate}\},
\end{align*}
generated by $\pi(x)T_\mu$, for $x\in A, \mu \in \bbF_n^+$.
\end{definition}

Due to non-degeneracy $A_{\text{nd}} \times_\al \sT_n^+$ is generated separately by a copy of $A$ and a copy of $\sT_n^+$. Moreover, the $\ca$-algebra generated by the universal representation of $A_{\text{nd}} \times_\al \sT_n^+$ is $A \times_\al^n \bbN$. Analogous to the $\ca$-case for $n=1$ we obtain the following.

\begin{proposition}
The operator algebras $A_{\textup{nd}} \times_\al \sT_n^+$ and $(A/\sR_\al)_{\textup{nd}} \times_{\dot\al} \sT_n^+$ are unital completely isometrically isomorphic. Moreover, $A_{\textup{nd}} \times_\al \sT_n^+$ is unital completely isometrically isomorphic to the subalgebra generated by $\pi_u(A)$ and $\{T_{u,i}\pi_u(1)\}$ in $A \times_\al \sT_n^+$.
\end{proposition}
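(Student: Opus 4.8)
The plan is to establish the ``moreover'' assertion first --- the completely isometric embedding of $A_{\textup{nd}}\times_\al\sT_n^+$ into $A\times_\al\sT_n^+$ --- by transporting the proof of Proposition \ref{P:multi sta} into the operator-algebra category, and then to read off the first assertion from Proposition \ref{P:only inj n}. Realise the universal representation $(\pi_u,\{T_{u,i}\})$ of $A\times_\al\sT_n^+$ on $H_u$ inside its enveloping $\ca$-algebra, so that the $T_{u,i}$ may be taken to be honest isometries with orthogonal ranges and the covariance relation $\pi_u(\al(x))=\sum_i T_{u,i}\pi_u(x)T_{u,i}^*$ holds in $\sB(H_u)$. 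Put $e:=\pi_u(1)$; since $1\in A$ is a projection and $\pi_u|_A$ is a $*$-homomorphism, $e$ is a projection. Let $\sB_0 := \alg\{\pi_u(A),\{T_{u,i}e\}\}\subseteq A\times_\al\sT_n^+$, which is precisely $\alg\{\pi_u(A),\{T_{u,i}\pi_u(1)\}\}$.

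First I would verify --- exactly as in Proposition \ref{P:sta-exe} --- that $e$ is a two-sided unit for $\sB_0$: this is a one-line computation from $e\pi_u(x)=\pi_u(x)=\pi_u(x)e$, from $T_{u,i}\pi_u(y)=\pi_u(\al(y))T_{u,i}$, and from $1\cdot\al(1)=\al(1)$. As $e$ is a projection, $\sB_0 = e\sB_0e$, so compression to $K:=eH_u$ is a unital complete isometry $\Gamma\colon\sB_0\to\sB(K)$; from now on identify $\sB_0$ with $\Gamma(\sB_0)$. Next I would check that $(\pi_u|_K,\{(T_{u,i}e)|_K\})$ belongs to $\sF_{\textup{nd}}$: $\pi_u|_K$ is non-degenerate because $e|_K=I_K$; writing $T_{u,i}e=\pi_u(\al(1))T_{u,i}$ one obtains $(T_{u,i}e)^*(T_{u,j}e)|_K=\de_{ij}I_K$ and $\sum_i(T_{u,i}e)(T_{u,i}e)^*\le\pi_u(\al(1))\le e$, so $\{(T_{u,i}e)|_K\}$ is a Toeplitz-Cuntz family; and the covariance relation follows by conjugating the one for $(\pi_u,\{T_{u,i}\})$ with $\pi_u(\al(1))$ and using $\al(1)\al(x)\al(1)=\al(x)$. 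Hence the integrated representation of this pair lifts to a completely contractive homomorphism $\Phi\colon A_{\textup{nd}}\times_\al\sT_n^+\to\sB(K)$ sending generators to generators, so $\Phi$ is a unital completely contractive surjection onto $\sB_0$.

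It remains to see that $\Phi$ is a complete isometry; this is the main obstacle. Fix $(\pi,\{T_i\})\in\sF_{\textup{nd}}$. Since $\sF_{\textup{nd}}\subseteq\sF$, its integrated representation lifts to a completely contractive homomorphism $\widetilde\sigma\colon A\times_\al\sT_n^+\to\sB(H)$ with $\widetilde\sigma(\pi_u(x))=\pi(x)$ and $\widetilde\sigma(T_{u,i}e)=\pi(\al(1))T_i=T_i\pi(1)=T_i$, the last step by non-degeneracy of $\pi$; then $\theta_{\pi,T}:=\widetilde\sigma|_{\sB_0}$ (under the identification above) is completely contractive and $\theta_{\pi,T}\circ\Phi$ coincides on generators --- hence everywhere --- with the canonical completely contractive lift of $(\pi,\{T_i\})$ to $A_{\textup{nd}}\times_\al\sT_n^+$. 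Consequently, for every $k$ and $b\in M_k(A_{\textup{nd}}\times_\al\sT_n^+)$,
\[
\nor{\Phi^{(k)}(b)} \ge \sup_{(\pi,\{T_i\})\in\sF_{\textup{nd}}}\nor{\big(\theta_{\pi,T}\circ\Phi\big)^{(k)}(b)} = \nor{b},
\]
the last equality being the very definition of the matrix norms on the universal operator algebra $A_{\textup{nd}}\times_\al\sT_n^+$; combined with complete contractivity of $\Phi$ this proves the ``moreover''.

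Finally, for the first assertion: by Proposition \ref{P:only inj n} the canonical complete isometry $\Lambda\colon A\times_\al\sT_n^+\to(A/\sR_\al)\times_{\dot\al}\sT_n^+$ sends $\pi_u(x)\mapsto\dot\pi_u(x+\sR_\al)$ and $T_{u,i}\mapsto\dot T_{u,i}$, hence $T_{u,i}\pi_u(1)\mapsto\dot T_{u,i}\dot\pi_u(1+\sR_\al)$, so $\Lambda$ restricts to a complete isometry of $\alg\{\pi_u(A),\{T_{u,i}\pi_u(1)\}\}$ onto $\alg\{\dot\pi_u(A/\sR_\al),\{\dot T_{u,i}\dot\pi_u(1+\sR_\al)\}\}$; applying the ``moreover'' to $(A,\al)$ and to $(A/\sR_\al,\dot\al)$ identifies these two subalgebras with $A_{\textup{nd}}\times_\al\sT_n^+$ and $(A/\sR_\al)_{\textup{nd}}\times_{\dot\al}\sT_n^+$ respectively, which gives the claimed unital completely isometric isomorphism. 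The two points that need a genuine (if short) argument rather than a citation are the displayed norm-recovery step --- where one must keep track that the matrix seminorms defining $A_{\textup{nd}}\times_\al\sT_n^+$ run over completely contractive lifts of \emph{all} of $\sF_{\textup{nd}}$, up to the dimension bound built into the construction of Subsection \ref{Ss:opalg}, and that each such lift factors through $\Phi$ --- and the fact that $e=\pi_u(1)$ is a unit for the non-selfadjoint algebra $\sB_0$, which is what upgrades the compression $\Gamma$ from completely contractive to completely isometric.
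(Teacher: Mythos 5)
Your argument is correct, and it is precisely the route the paper intends: the proposition is stated without proof, with the remark that it is ``analogous to the $\ca$-case for $n=1$'', i.e.\ to Propositions \ref{P:sta-exe} and \ref{P:multi sta}, and your write-up is exactly that analogy carried out --- compress to $K=\pi_u(1)H_u$ after checking $\pi_u(1)$ is a unit for $\alg\{\pi_u(A),\{T_{u,i}\pi_u(1)\}\}$, verify the compressed pair lies in $\sF_{\textup{nd}}$, and then recover the universal norm by factoring every lift of a pair in $\sF_{\textup{nd}}$ through $\Phi$, which is the one step that genuinely needs an argument in the non-selfadjoint category (where mutually inverse epimorphisms no longer suffice) and which you handle correctly, modulo the dimension-bound remark you already flag. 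Your deduction of the first assertion from the ``moreover'' part together with the canonical, generator-to-generator form of the isomorphism in Proposition \ref{P:only inj n} is likewise sound.
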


There is a considerable difference between the unital and non-unital dynamical systems. One such difference is highlighted below.

\begin{remark}
Fix $(\pi, \{T_i\}_{i=1}^n) \in \sF$. When $\al(1)=1$, then $\pi(1)$ is a unit for $\ca(\pi(A), T_i)$, hence $\pi$ can be chosen unital. Moreover $\{T_i\}$ is automatically a Cuntz family. Thus $(\pi, \{T_i\}) \in \sF$ if and only if $T_i \in \sO_n^+$ and $\pi(\al(x)T_i=T_i\pi(x)$.
Therefore, $A \times_\al \sT_n^+$ is generated by $A/\sR_\al$ and $\sO_n^+$.

Additionally, the algebras $A\times_\al \sT_n^+$ and $A_{\text{nd}} \times_\al \sT_n^+$ coincide. Hence the $\ca$-algebra generated by the universal representation of $A \times_\al \sT_n^+$ coincides with $A \times_{\al}^n \bbN$.
\end{remark}

\begin{proposition}\label{P:Fmaximal}
Let $(A,\al)$ be a unital dynamical system. Then every completely isometric representation $\rho$ that integrates a family $(\pi, \{S_i\})\in \sF$ is maximal.
Therefore $A \times_\al \sT_n^+$ has the unique extension property.
\end{proposition}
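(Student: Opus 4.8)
The plan is to show that a completely isometric representation $\rho$ integrating $(\pi,\{S_i\})\in\sF$ admits no nontrivial dilation, by analyzing an arbitrary dilation and forcing it to split. First I would set up the standard dilation picture: suppose $\nu\colon A\times_\al\sT_n^+\to\sB(K)$ dilates $\rho$, with $H\subseteq K$ and $\rho(a)=P_H\nu(a)|_H$. Writing $q:=P_H$ as a projection in $\sB(K)$, the key structural fact I would exploit is that $A\times_\al\sT_n^+$ is generated (as an operator algebra) by the copy of $A$ (which in the injective reduction is all of $A$, since $(A,\al)$ is unital so $1$ is a unit for the algebra and $\pi$ may be taken unital) together with the Toeplitz–Cuntz family $\{S_i\}$. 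So it suffices to show that $H$ is simultaneously semi-invariant for $\nu(A)$ and reducing in the appropriate sense for the images of the $S_i$; equivalently, that $q$ commutes with $\nu(a)$ for $a\in A$ and with each $\nu(S_i)$.

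The heart of the argument is the classical "isometry forces reduction" trick. Since each $S_i$ is an isometry with the $S_iS_i^*$ orthogonal (and $\sum S_iS_i^*\le I$), its image $V_i:=\nu(S_i)$ under the dilation is again such an isometry (contractive homomorphisms of operator algebras applied to elements that are isometries — here one uses that $S_i^*S_i = 1$ holds already inside the generated $\ca$-algebra, but at the level of the non-selfadjoint algebra one instead argues via the Schur/Arveson dilation being a $*$-representation of $\cenv$, or more directly that a complete contraction sends the "polynomial identity" $s_i$ with $\|s_i\|=\|s_i^k\|^{1/k}=1$ for all $k$ into a power-bounded element whose dilation is an isometry). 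I would then note that $\rho(S_i)=q V_i|_H$ is an isometry on $H$, and that $V_i^* q V_i$ restricted appropriately equals $\rho(S_i)^*\rho(S_i)=I_H$; the standard computation $qV_iq = V_iq$ (because $\|qV_i\xi\|=\|\xi\|$ for $\xi\in H$ and $V_i$ isometric forces $V_i\xi\in H$... wait, that gives the wrong inclusion) — more carefully, one shows $V_i H\subseteq H$ fails in general but $V_i^* H \subseteq H$: from $q V_i^* V_i q = q = qV_i^* q V_i q$ one deduces $(I-q)V_iq = 0$, i.e. $V_iq = qV_iq$, so $H$ is invariant for $V_i$, and symmetrically for $V_i^*$ using the other $S_j$'s to recover $\sum V_iV_i^*$; hence $q$ commutes with each $V_i$.

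Once $q$ commutes with all $\nu(S_i)$, I would use the covariance relation $\pi(\al(x)) = \sum_i S_i\pi(x)S_i^*$, which holds inside $A\times_\al\sT_n^+$ (it is a relation defining $\sF$, hence satisfied by $\nu$), to propagate the commutation to $\nu(A)$: for $x\in A$, $\nu(\al(x)) = \sum_i V_i\nu(x)V_i^*$, and since $q$ commutes with the $V_i$, if $q$ commutes with $\nu(x)$ it commutes with $\nu(\al(x))$. It remains to get the base case $q\nu(x) = \nu(x)q$ for $x\in A$ directly: here I would invoke that $\rho|_A = \pi$ is a $*$-homomorphism (it is a completely isometric representation of the $\ca$-algebra $A$), so $\nu|_A$ dilates a $*$-representation, and a dilation of a $*$-representation of a $\ca$-algebra is automatically reducing — $q$ commutes with $\nu(A)$. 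Combining, $q$ commutes with $\nu$ on a generating set, hence on all of $A\times_\al\sT_n^+$, so $\nu = \rho\oplus\nu'$ and $\rho$ is maximal. Finally, the unique extension property follows by the general principle already recorded in the excerpt: every complete isometry has a maximal dilation, and if $\rho$ is itself a complete isometry whose integrated form is always maximal, then in particular the identity representation of $\cenv(A\times_\al\sT_n^+)$ restricts to a maximal (hence boundary) representation, and the same holds for any faithful representation of the $\ca$-envelope since it restricts to a complete isometry of $A\times_\al\sT_n^+$ that integrates a family in $\sF$; thus $A\times_\al\sT_n^+$ has the unique extension property by \cite{Dun08}.

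\medskip

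The step I expect to be the main obstacle is establishing that $q$ commutes with the images $\nu(S_i)$ purely from the Toeplitz–Cuntz relations as they sit in the \emph{non-selfadjoint} algebra $A\times_\al\sT_n^+$ — one does not have $S_i^*$ available as an element of the algebra, so the "isometry" property must be extracted either through the $\ca$-envelope (circularly, if one is not careful) or through a power-boundedness/Schäffer-type argument directly on the dilation $\nu$. The cleanest route is probably to observe that $\nu$ extends to a $*$-representation of a $\ca$-cover and that in that $\ca$-cover the $S_i$ generate a copy of $\sO_n$ (using the unital hypothesis, which makes $\{T_i\}$ automatically a Cuntz family, as noted in the Remark preceding this Proposition), so $\nu(S_i)$ is literally an element of a represented $\sO_n$ and the commutation computations above are legitimate; the unital hypothesis is exactly what removes the pathology and is used essentially here.
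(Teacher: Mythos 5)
Your outline is the paper's: split the generating set into the copy of $A$ (where any dilation is automatically trivial because $\rho|_A$ is a $*$-representation) and the family $\{S_i\}$ (where the unital hypothesis makes $\{\rho(S_i)\}$ a Cuntz, not merely Toeplitz--Cuntz, family), then kill the off-diagonal corners by a $2\times 2$ matrix computation. The genuine gap is in the crucial step for the $S_i$: to make the $(1,2)$-corners vanish you need $\sum_i \nu(S_i)\nu(S_i)^* = I_K$ upstairs, and neither of your justifications delivers this for the dilation $\nu$ you fixed. For an \emph{arbitrary} dilation, $\nu(S_i)$ need not be an isometry (a completely contractive representation of $\sO_n^+$ in general only sends the generators to a row contraction), and an arbitrary dilation certainly does not extend to a $*$-representation of a $\ca$-cover; only \emph{maximal} dilations do. So the proposed clean route (``$\nu$ extends to a $*$-representation of a $\ca$-cover in which the $S_i$ generate a copy of $\sO_n$'') is not available as stated, and the phrase ``symmetrically for $V_i^*$ using the other $S_j$'s to recover $\sum_i V_iV_i^*$'' has no argument behind it without that extension.

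The repair is exactly what the paper does, and it is a standard reduction you omitted: it suffices to show that \emph{maximal} dilations of $\rho$ split (this is the equivalence recorded in Subsection \ref{Ss:opalg}), and, after splitting off the $A$-part, one only needs that $\rho|_{\sO_n^+}$ is maximal. A maximal dilation $\nu$ of this unital complete isometry extends to a $*$-representation of $\cenv(\sO_n^+)=\sO_n$ (simplicity of $\sO_n$ makes the \v{S}ilov ideal trivial, so there is no circularity), whence $\{\nu(S_i)\}$ \emph{is} a Cuntz family. The $(2,1)$-entries vanish because $\rho(S_i)$ is an isometric corner of the contraction $\nu(S_i)$ --- your ``squeeze'' computation, which should be run as $q = q\nu(S_i)^*q\nu(S_i)q \le q\nu(S_i)^*\nu(S_i)q \le q$ rather than by assuming $\nu(S_i)^*\nu(S_i)=I_K$ --- and then equating $(1,1)$-entries in $\sum_i\nu(S_i)\nu(S_i)^*=I_K$ gives $\sum_i\rho(S_i)\rho(S_i)^* + a_ia_i^*=I_H$, where $a_i$ is the remaining corner; since $\{\rho(S_i)\}$ is already a Cuntz family on $H$ (here the unital hypothesis enters, as you correctly noted), this forces $a_i=0$. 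With the reduction to maximal dilations inserted your argument closes and coincides with the paper's; without it, the key step is unsupported.
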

\begin{proof}
Without loss of generality we can assume that $\al$ is injective. Hence by the previous remarks the algebra $A \times_\al \sT_n^+$ is generated by $A$ and $\sO_n^+$. Let $\rho\colon A \times_\al \sT_n^+ \rightarrow \sB(H)$ be a representation that integrates a pair $(\pi, \{S_i\})$ in $\sF$ and let $\nu\colon A \times_\al \sT_n^+ \rightarrow \sB(K)$ be a dilation of $\rho$. Then $\nu|_A$ is a dilation of the $*$-representation $\rho|_A$, hence $\nu|_A$ is trivial. That is $H$ is $\nu(A)$-reducing. It suffices to show that $\rho|_{\sO_n^+}$ is also maximal, for then $H$ will be $\nu(\sO_n^+)$-reducing. This will imply that $H$ is $\nu(A \times_\al \sT_n^+)$-reducing, hence $\rho$ is maximal.

\vspace{.05in}

\noindent \textit{Claim.} The tensor algebra $\sT_n^+$ has the unique extension property.

\noindent \textit{Proof of the Claim.} Let $\nu\colon \sO_n^+ \rightarrow \sB(K)$ be a maximal dilation of $\rho$, which is a unital completely isometric map. Then $\nu$ extends to a unique (faithful) $*$-representation of $\sO_n$, which we will again denote by $\nu$. Hence $\{\nu(S_i)\}$ is also a Cuntz family. For $i=1,\dots,n$, let
\begin{align*}
\nu(S_i)= \left[ \begin{array}{cc} \rho(S_i) & a_i\\ 0  & b_i \end{array} \right].
\end{align*}
Note that the $(2,1)$-entry must be zero since $\rho(S_i)$ is an isometry and $\nu(S_i)$ is a contraction. Then
\begin{align*}
\nu(S_i)\nu(S_i)^*= \left[ \begin{array}{cc}
\rho(S_i)\rho(S_i)^* + a_ia_i & \ast\\
\ast & \ast\\
\end{array} \right].
\end{align*}
and $\sum_{i=1}^n \nu(S_i)\nu(S_i)^* = I_K$, since $\{\nu(S_i)\}$ is a Cuntz family. Thus, by equating the $(1,1)$-entries
\begin{align*}
\sum_{i=1}^n \rho(S_i)\rho(S_i)^* + a_ia_i^* = I_H.
\end{align*}
But $\{\rho(S_i)\}$ is in turn a Cuntz family on $H$, thus $0\leq a_ia_i^* \leq \sum_{i=1}^n a_ia_i^* = 0$.
Hence $a_i=0$ for every $i=1,\dots,n$, and $\nu$ is a trivial dilation of $\rho$.
\end{proof}

The following Corollary is immediate. Note that the operator algebra we have constructed may not be separable (cf. Subsection \ref{Ss:opalg}).

\begin{corollary}\label{C:maximal}
Let $(A,\al)$ be a unital dynamical system. Then the Choquet boundary exists for the operator algebra $A \times_\al \sT_n^+$.
\end{corollary}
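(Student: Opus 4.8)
The plan is to read the statement off from Proposition \ref{P:Fmaximal} together with the standing discussion on boundary representations and the unique extension property in Subsection \ref{Ss:opalg}; essentially no new dilation-theoretic work is required. Since $(A,\al)$ is unital, Proposition \ref{P:Fmaximal} gives that $\sA := A \times_\al \sT_n^+$ has the unique extension property, i.e.\ every faithful $*$-representation of $\cenv(\sA)$ restricts to a maximal (equivalently boundary) representation of $\sA$. I would then invoke the chain of implications already recorded after Lemma \ref{L:fful_c*-env}: the free atomic representation of $\cenv(\sA)$ is faithful, hence boundary by the unique extension property; its irreducible direct summands are therefore boundary as well, maximality of a representation being inherited by reducing subspaces; and since the free atomic representation is faithful, these irreducible boundary representations separate the points of $\cenv(\sA)$. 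By Lemma \ref{L:fful_c*-env}, separating the points of $\cenv(\sA)$ is the same as completely isometrically norming $\sA$, so $\sA$ admits sufficiently many irreducible boundary representations, which is exactly the assertion that it has a Choquet boundary in the sense of \cite{Ar08}.

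If one wants to spell out the one step that is not literally quoted in Subsection \ref{Ss:opalg} — that an irreducible summand $\pi_\lambda$ of a boundary representation $\pi = \bigoplus_\mu \pi_\mu$ of $\sA$ is again boundary — the argument is short: any dilation $\nu_\lambda$ of $\pi_\lambda|_\sA$ produces the dilation $\nu_\lambda \oplus \bigoplus_{\mu \neq \lambda}\pi_\mu$ of $\pi|_\sA$, which is trivial by maximality of $\pi|_\sA$; restricting this triviality to the reducing subspace lying over the dilation space of $\pi_\lambda$ shows that $\nu_\lambda$ is trivial, so $\pi_\lambda|_\sA$ is maximal.

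I do not expect a genuine obstacle here, since the real content has already been carried out in Proposition \ref{P:Fmaximal} (which itself reduces to the injective case, so no separate reduction is needed). The only point worth flagging is that $A \times_\al \sT_n^+$ may fail to be separable, so one cannot simply appeal to Arveson's theorem in \cite{Ar08} on the existence of the Choquet boundary for separable operator algebras; it is precisely the unique extension property, applied to the free atomic representation of $\cenv(\sA)$, that replaces that appeal.
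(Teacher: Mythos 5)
Your proposal is correct and follows the paper's own route: the paper deduces the corollary immediately from Proposition \ref{P:Fmaximal} (unique extension property) together with the observation in Subsection \ref{Ss:opalg} that the free atomic representation of $\cenv(\sA)$ is then boundary, so its irreducible summands furnish sufficiently many irreducible boundary representations. Your extra verification that irreducible summands of a boundary representation are again boundary, and your remark that non-separability blocks a direct appeal to \cite{Ar08}, merely spell out points the paper leaves implicit.
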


\begin{theorem}\label{T: not unit multi}
Let $(A,\al)$ be a unital dynamical system. Then $\cenv(A\times_\al \sT_n^+) \simeq A \times_\al^n \bbN$.
\end{theorem}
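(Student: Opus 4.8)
The plan is to exhibit a concrete $\ca$-cover of $A \times_\al \sT_n^+$ that is $*$-isomorphic to $A \times_\al^n \bbN$ and then argue that it equals the $\ca$-envelope by showing the \v{S}ilov ideal is trivial. By Proposition \ref{P:only inj n} we may assume $\al$ is injective, so $\sR_\al = (0)$. Recall from the remarks just before Proposition \ref{P:Fmaximal} that, since $(A,\al)$ is unital, every $(\pi,\{T_i\}) \in \sF$ automatically has $\pi$ unital and $\{T_i\}$ a Cuntz family, so $\sF = \sF_{\textup{nd}}$ and the $\ca$-algebra generated by the universal representation of $A \times_\al \sT_n^+$ is exactly $A \times_\al^n \bbN$. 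Thus $(A \times_\al^n \bbN, \iota)$ is already a $\ca$-cover of $A \times_\al \sT_n^+$, where $\iota$ is the canonical inclusion of the non-selfadjoint part; the only thing to check is that $\iota$ is a complete isometry and that the \v{S}ilov ideal vanishes.

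First I would verify that $A \times_\al \sT_n^+$ does embed completely isometrically into $A \times_\al^n \bbN$. This follows from Proposition \ref{P:Fmaximal}: the universal representation of $A \times_\al \sT_n^+$ is a direct sum of representations integrating pairs in $\sF$, each of which is maximal, hence the universal representation is maximal, hence it has the unique extension property. Consequently the \v{S}ilov ideal of $A \times_\al \sT_n^+$ inside $\ca(\text{universal rep}) = A \times_\al^n \bbN$ must be $(0)$: any nonzero boundary ideal $\sJ$ would give a proper quotient $q_\sJ$ that is still completely isometric on the copy of $A \times_\al \sT_n^+$, contradicting maximality (the quotient representation would be a proper compression of a maximal representation restricted to the algebra). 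More directly, since $A \times_\al \sT_n^+$ has the unique extension property and its universal $\ca$-algebra acts via a faithful representation whose restriction is maximal, that faithful representation is a boundary representation, so by Lemma \ref{L:fful_c*-env} (applied to $\cenv(A \times_\al \sT_n^+)$) it factors through the $\ca$-envelope with trivial kernel; this forces $\cenv(A \times_\al \sT_n^+) = A \times_\al^n \bbN$.

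The main obstacle, and the step deserving the most care, is precisely the passage from ``$A \times_\al \sT_n^+$ has the unique extension property'' to ``its universal $\ca$-algebra $A \times_\al^n \bbN$ is the $\ca$-envelope.'' The subtlety is that the unique extension property says faithful representations of $\cenv$ are boundary, but here we have a faithful representation of $A \times_\al^n \bbN$ (the universal representation), and we need to know $A \times_\al^n \bbN$ is not strictly larger than $\cenv$. The clean way is: by Proposition \ref{P:Fmaximal} the universal representation $\rho_u$ of $A \times_\al \sT_n^+$ is maximal, hence it has no nontrivial dilations; but the canonical quotient map $A \times_\al^n \bbN = \ca(\rho_u(A \times_\al \sT_n^+)) \to \cenv(A \times_\al \sT_n^+)$ composed with any faithful representation of $\cenv$ would be a representation of $A \times_\al^n \bbN$ whose restriction to $A \times_\al \sT_n^+$ is a (complete isometry, hence non-dilating) representation dilated by $\rho_u$; since $\rho_u$ is maximal this restriction is a direct summand of $\rho_u$, forcing the quotient map to be injective. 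Hence $A \times_\al^n \bbN \simeq \cenv(A \times_\al \sT_n^+)$. I would write out this dilation-theoretic bookkeeping carefully, since it is the heart of the argument; everything else (injectivity of the inclusion, identification of the generated $\ca$-algebra) is already recorded in the preceding propositions.
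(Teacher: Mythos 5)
Your overall strategy is exactly the paper's: for a unital system every pair in $\sF$ is automatically non-degenerate with a Cuntz family, so the $\ca$-algebra generated by the universal representation $\rho_u$ of $A\times_\al\sT_n^+$ is $A\times_\al^n\bbN$; then Proposition \ref{P:Fmaximal} gives maximality (indeed you may apply it directly to $\rho_u$, since a direct sum of $\sF$-pairs is again an $\sF$-pair, so the detour through ``each summand is maximal'' is not even needed), and one concludes that the $\ca$-algebra generated by a maximal completely isometric representation is the $\ca$-envelope. The gap is in your justification of that last passage, which you yourself single out as the heart of the matter. If $\Phi\colon A\times_\al^n\bbN\to\cenv(A\times_\al\sT_n^+)$ is the canonical surjection and $\sigma$ a faithful representation of the envelope on $K$, the representation $\sigma\circ\Phi|_{A\times_\al\sT_n^+}$ acts on $K$ and bears no dilation relation to $\rho_u$ whatsoever: it is neither ``dilated by $\rho_u$'' nor forced to be a direct summand of it, and even a decomposition of it into summands of $\rho_u$ restricted to the subalgebra would not by itself make $\Phi$ isometric on the full $\ca$-algebra (were such an argument valid, every $\ca$-cover would be the envelope). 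Likewise your earlier appeal to Lemma \ref{L:fful_c*-env} is circular: that lemma concerns $*$-homomorphisms defined on $\cenv(\sA)$, so invoking it for the universal representation of $A\times_\al^n\bbN$ presupposes the identification you are trying to prove.

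The correct mechanism is the unique extension property of maximal maps (Dritschel--McCullough, as used implicitly in the paper's two-line proof via Corollary \ref{C:maximal} and Lemma \ref{L:fful_c*-env}): since $\Phi$ is completely isometric on $\sA:=A\times_\al\sT_n^+$, the completely contractive map $\sigma\Phi(\rho_u(a))\mapsto\rho_u(a)$ extends by Arveson's extension theorem to a u.c.p.\ map $\Psi$ into $\sB(H_u)$; a Stinespring dilation of $\Psi\circ\sigma\circ\Phi$ is a $*$-representation of $A\times_\al^n\bbN$ whose restriction to $\sA$ dilates $\rho_u$, and maximality of $\rho_u$ forces $H_u$ to be reducing, so that $\Psi\circ\sigma\circ\Phi$ is a $*$-homomorphism fixing $\sA$ and hence is the identity on $A\times_\al^n\bbN$. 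This is what makes $\Phi$ injective and yields $\cenv(A\times_\al\sT_n^+)\simeq A\times_\al^n\bbN$. So the theorem and your route to it are right, but the dilation-theoretic bookkeeping you propose to write out would not close; you should either reproduce the u.c.p./Stinespring argument above or cite the standard fact that a maximal completely isometric representation generates the $\ca$-envelope.
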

\begin{proof}
In view of Corollary \ref{C:maximal} and Lemma \ref{L:fful_c*-env}, we have that the identity representation of $A\times_\al^n \bbN$ is faithful and $A\times_\al \sT_n^+$ has the unique extension property, thus $A\times_\al^n \bbN \simeq \cenv(A \times_\al \sT_n^+)$.
\end{proof}

\begin{theorem}\label{T:env *-auto}
Let $(A,\al)$ be an automorphic dynamical system. Then the $\ca$-envelope of $A \times_\al \sT_n^+$ is the twisted crossed product $A \rtimes_\al \sO_n$.
\end{theorem}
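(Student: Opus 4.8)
The plan is to reduce the statement to machinery already in place. The first observation is that an automorphism of a unital $\ca$-algebra is itself unital, so $(A,\al)$ is a unital dynamical system and Theorem~\ref{T: not unit multi} applies verbatim, giving $\cenv(A\times_\al\sT_n^+)\simeq A\times_\al^n\bbN$. Hence the whole content of the statement is the identification $A\times_\al^n\bbN\simeq A\rtimes_\al\sO_n$ for an automorphism $\al$. For this I would first record that, $\al$ being an automorphism, $\sR_\al=(0)$ and the canonical map $A\to A_\infty$ is a $*$-isomorphism carrying $\al$ onto $\al_\infty$; in particular the projection $p=[1_A]\in A_\infty$ is the unit of $A_\infty$, so the full corner $p(A_\infty\rtimes_{\al_\infty}\sO_n)p$ is all of $A_\infty\rtimes_{\al_\infty}\sO_n\simeq A\rtimes_\al\sO_n$. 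A canonical $*$-epimorphism $A\times_\al^n\bbN\to A\rtimes_\al\sO_n$ then comes for free: for the universal covariant pair $(\pi_u,U_u)$ of $(A,\al)$ and a Cuntz family $\{S_i\}$, the pair $(\pi_u\otimes I,\{U_u\otimes S_i\})$ is non-degenerate and satisfies $\sum_i(U_u\otimes S_i)(\pi_u(x)\otimes I)(U_u\otimes S_i)^*=\pi_u(\al(x))\otimes\sum_iS_iS_i^*=\pi_u(\al(x))$, so it lies in the class defining $A\times_\al^n\bbN$. Its injectivity is precisely \cite[Proposition~3.4]{Sta93}, which I would invoke.

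Should one prefer an argument self-contained within the paper's toolkit (as in Theorems~\ref{T: unit} and~\ref{T: not unit}), I would run a gauge-invariance argument on the map $q\colon A\times_\al^n\bbN\to A\rtimes_\al\sO_n$. Both algebras carry a gauge action of $\bbT$ (trivial on the copy of $A$, scalar on the $T_i$, respectively on the $U_u\otimes S_i$), $q$ is equivariant, and both admit the attendant faithful conditional expectation onto the fixed-point algebra; so it suffices to see that $q$ is injective on fixed-point algebras. This is where ``automorphism'' is doing the real work: for any $(\pi,\{T_i\})$ in the defining class, invertibility of $\al$ yields $T_i^*\pi(x)=\pi(\al^{-1}(x))T_i^*$, whence every $T_\mu T_\nu^*$ with $|\mu|=|\nu|$ commutes with $\pi(A)$; and non-degeneracy together with $\al(1)=1$ forces $\sum_iT_iT_i^*=\pi(\al(1))=\pi(1)=I$, so $\{T_i\}$ is a Cuntz family. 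Consequently the fixed-point algebra of $A\times_\al^n\bbN$ is the $\ca$-algebra generated by the \emph{commuting} copies $\pi_u(A)$ and $M_{n^{\infty}}$ (the UHF core of $\sO_n$), which -- since $A$ embeds, by Remark~\ref{R:cex 1}, and $M_{n^{\infty}}$ is simple -- is a copy of $A\otimes M_{n^{\infty}}$; the same description holds verbatim for $A\rtimes_\al\sO_n$, and under these identifications $q$ restricts to the identity of $A\otimes M_{n^{\infty}}$, hence is an isomorphism.

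I expect the injectivity of $q$ -- equivalently Stacey's Proposition~3.4, equivalently the identification of the two cores -- to be the only genuine obstacle; the reduction via Theorem~\ref{T: not unit multi} and the construction of $q$ are routine bookkeeping. I would also stress in the write-up that this obstacle is truly confined to the automorphic case: once $\al$ fails to be invertible and $n>1$, the commutation $T_\mu T_\nu^*\leftrightarrow\pi(A)$ breaks down, the cores part ways, and one is precisely in the setting of Remark~\ref{R:cex 1} and the counterexample of Subsection~\ref{S:counter}.
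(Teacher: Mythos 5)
Your proposal is correct and follows essentially the same route as the paper: the paper's proof is precisely your first paragraph, namely that $\al$ automorphic forces $\al(1)=1$, so the unital-case result (Theorem \ref{T: not unit multi}) gives $\cenv(A\times_\al\sT_n^+)\simeq A\times_\al^n\bbN$, and then \cite[Proposition 3.4]{Sta93} supplies the identification $A\times_\al^n\bbN\simeq A\rtimes_\al\sO_n$. Your second paragraph (gauge-equivariance plus matching the fixed-point cores with $A\otimes M_{n^\infty}$, using invertibility of $\al$ to commute $\pi(A)$ past $T_\mu T_\nu^*$ with $|\mu|=|\nu|$) is simply an in-house proof of the cited Stacey proposition and is a sound, if not needed, addition.
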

\begin{proof}
By assumption $\al(1)=1$, thus the $\ca$-algebra generated by the universal representation of $A \times_\al \sT_n^+$ is $A \times_\al^n \bbN$. Then \cite[Proposition 3.4]{Sta93} completes the proof.
\end{proof}

\begin{remark}\label{R:cex 2}
Following the results for $n=1$ of \cite{Kak11}, and the previous Theorem \ref{T:env *-auto}, one might speculate that the $\ca$-envelope of $A \times_\al \sT_n^+$ is (a corner of) the twisted crossed product $A_\infty \rtimes_{\al_\infty} \sO_n$ (at least when $\al$ is injective). The counterexample constructed in Subsection \ref{S:counter} shows that this is not true in general, even when $\al$ is unital.
\end{remark}

\begin{remark}
Alternatively one could define the universal object associated to $\ell^1(A,\al,\bbF_n^+)$ relative to the class
\begin{align*}
\big\{ (\pi, \{T_i\}_{i=1}^n) \mid \pi\colon A \rightarrow \sB(H),\, T_i \in
\sT_n^+,\, \pi(\al(x))T_i=T_i\pi(x)\big\}.
\end{align*}
It is easy to check that the previous analysis applies to this case also.
\end{remark}

\subsection{The Semicrossed Product $A \times_\al^t \sT_n^+$}\label{S:multi twi spr}

We present the non-selfadjoint analogue of the twisted crossed product. In the next Subsection we give Exel systems $(A,\al,L)$ for which the class $\sF^t$, that follows, is non-empty.

\begin{definition}
Let $\sF^t$ be the subclass of
\begin{align*}
\left\{ (\pi,S,\{T_i\}) \mid \ S \text{ isometry},\, S\pi(x)=\pi(\al(x))S,\, T_i\in \sT_n^+ \right\}.
\end{align*}
such that $(\pi, S, \{S_i\})$ integrates to a representation $\rho$ of $\ell^1(A,\al,\bbF_n^+)$ with
\begin{align*}
\rho(x \otimes \de_i)= \pi(x)S \otimes S_i, \foral x\in A, i=1,\dots,n.
\end{align*}
We denote by $A \times_\al^t \sT_n^+$ the universal operator algebra relative to the class $\sF^t$, generated by $\pi(x)\otimes I$ and $S^{|\mu|}\otimes T_{\mu}$, for $x\in A$ and $\mu\in \bbF_n^+$.
\end{definition}

\begin{remark}\label{R:cex 3}
We mention that the canonical map $A \times_\al \sT_n^+ \rightarrow A \times_\al^t \sT_n^+$ may not be a complete isometry, as we show within the counterexample of Subsection \ref{S:counter}.
\end{remark}

\begin{theorem}\label{T:twi spr}
The $\ca$-envelope of the operator algebra $A \times_\al^t \sT_n^+$ is the $\ca$-subalgebra $\ca(B,\{U\otimes S_i\})$ of the twisted tensor product $\sM(B) \rtimes_{\be} \sO_n$, where $B= (A/\sR_\al)_\infty$ and $\be =(\dot{\al})_\infty$.
\end{theorem}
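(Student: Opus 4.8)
The strategy mirrors the proof of Theorem \ref{T: unit}, now carried out in the twisted setting with $\sO_n$ in place of $\bbZ$. First I would reduce to the injective case $\sR_\al = (0)$ using Proposition \ref{P:only inj n}; then $B = A_\infty$ and $\be = \al_\infty$. The first task is to exhibit $\ca(B, \{U \otimes S_i\})$ as a $\ca$-cover of $A \times_\al^t \sT_n^+$. Take a faithful covariant unitary pair $(\pi, U)$ for $(A_\infty, \al_\infty)$ together with a faithful Cuntz family $\{S_i\}$ commuting with $\ca(\pi, U)$; the triple $(\pi|_A, U|_A, \{S_i\})$ — more precisely with the weighting $S^{|\mu|} \otimes T_\mu \rightsquigarrow U^{|\mu|}\otimes S_\mu$ — lies in $\sF^t$ (the isometry condition $U\pi(x) = \pi(\al(x))U$ holds because $U$ is covariant, and $U$ restricts to an isometry on $A$), so the universal property yields a completely contractive homomorphism $A \times_\al^t \sT_n^+ \to \sM(A_\infty)\rtimes_{\al_\infty}\sO_n$. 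To see it is a complete \emph{isometry}, I would invoke Example \ref{E:rpn} (adapted to the twisted picture) or directly the left-regular representation, which already shows the Fourier coefficients $P_\nu$ separate points; combining the universal operator-algebra structure of $A\times_\al^t\sT_n^+$ with such a concrete completely isometric model gives the claim. One checks $\ca(\pi(A), \{U\otimes S_i\}) = \ca(\pi(A_\infty), \{U\otimes S_i\})$ since the monomials $(U^{-n}\otimes I)\pi(x)(U^n\otimes I)$ generate $\pi(A_\infty)$; thus $\ca(B,\{U\otimes S_i\})$ is a $\ca$-cover.

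The heart of the proof is then showing the \v{S}ilov ideal $\sJ$ of $A\times_\al^t\sT_n^+$ inside $\ca(B,\{U\otimes S_i\})$ is trivial. Here I would run the gauge-invariance argument exactly as in Theorem \ref{T: unit}: the twisted tensor product $\sM(B)\rtimes_\be\sO_n$ carries a gauge action $\{\beta_z\}_{z\in\bbT}$ (rotating the $S_i$, or equivalently acting on the $\sO_n$ tensor factor), and $A\times_\al^t\sT_n^+$ is $\beta_z$-invariant because its generators $\pi(x)\otimes I$ and $S^{|\mu|}\otimes T_\mu$ are eigenvectors. Hence $\sJ$, being the largest boundary ideal, is $\beta_z$-invariant, so $\sJ$ meets the fixed-point algebra nontrivially if $\sJ \neq (0)$. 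The fixed-point algebra of $\ca(\pi(A_\infty), \{U\otimes S_i\})$ under this gauge action is $\ca(\pi(A_\infty), I) \cong A_\infty$ (the degree-zero part). Since $A_\infty$ is essential in $\sM(A_\infty)$, $\sJ$ intersects $A_\infty$. Using that $A_\infty = \varinjlim A$, pick $x \in \sJ \cap A_\infty$ with $\al_\infty^n(x) \in A$ for some $n$; then $\pi(\al_\infty^n(x)) = (U^n\otimes I)\pi(x)(U^{-n}\otimes I) \in \sJ$, so $\sJ$ meets the copy of $A$ — hence meets $A\times_\al^t\sT_n^+$ itself, contradicting that $\sJ$ is boundary. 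Therefore $\sJ = (0)$ and $\ca(B,\{U\otimes S_i\})$ is the $\ca$-envelope.

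\textbf{Main obstacle.} I expect the genuinely delicate point to be establishing that the map into $\sM(B)\rtimes_\be\sO_n$ is a \emph{complete isometry} rather than merely completely contractive — i.e., that this $\ca$-cover actually receives a completely isometric copy of the universal operator algebra $A\times_\al^t\sT_n^+$. Unlike the $\bbZ$-case, the twisted tensor construction bundles $\sO_n$ commuting with $\ca(\pi,U)$, and one must be careful that the weighted generators $S^{|\mu|}\otimes T_\mu$ behave correctly — in particular that the class $\sF^t$ is genuinely modelled by $(\pi|_A, U|_A, \{S_i\})$ with the prescribed integration formula $\rho(x\otimes\de_i) = \pi(x)U\otimes S_i$. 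This should follow by testing against the left-regular representation of Example \ref{E:rpn} tensored with a faithful $\sO_n$, whose Fourier-coefficient projections $P_\nu$ recover each $\pi(x_\nu)$ and hence detect the full operator-algebra norm; once that model sits inside (a matrix amplification of) $\sM(A_\infty)\rtimes_{\al_\infty}\sO_n$ completely isometrically, the universal property forces the canonical map to be a complete isometry. The remaining gauge-invariance and fullness bookkeeping is then routine, parallel to Theorems \ref{T: unit} and \ref{T: not unit}.
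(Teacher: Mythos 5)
Your overall architecture (reduce to $\sR_\al=(0)$, exhibit $\ca(B,\{U\otimes S_i\})$ as a $\ca$-cover, then kill the \v{S}ilov ideal) is the right shape, but both of your key steps have genuine gaps. For the complete isometry of the embedding $A\times_\al^t\sT_n^+\hookrightarrow \sM(A_\infty)\rtimes_{\al_\infty}\sO_n$, you propose to use the left-regular model of Example \ref{E:rpn} and its Fourier coefficients $P_\nu$. But Fourier coefficients only separate points, i.e.\ give \emph{injectivity}, not (complete) isometry; the paper's own counterexample makes exactly this point: in the final Remark of Subsection \ref{S:counter} the map $\ell^1(A,\al,\bbF_n^+)\rightarrow A_\infty\rtimes_{\al_\infty}\sO_n$ is completely contractive, injective, and admits a Fourier transform, yet is \emph{not} completely isometric. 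What actually makes the twisted semicrossed product work, and what the paper's Claim~1 uses, is a dilation argument: given any $(\pi,S,\{T_i\})\in\sF^t$, the isometry $S$ and the Toeplitz-Cuntz family act on different tensor legs (they commute, cf.\ Remark \ref{R:cex 4}), so one can extend $(\pi,S)$ to a covariant unitary pair $(\Pi,U)$ of $(A_\infty,\al_\infty)$ via \cite[Proposition 2.3]{Sta93} and simultaneously dilate $\{T_i\}$ to a Cuntz family, obtaining a representation of the twisted crossed product that compresses back to the given one. This shows the universal norm of $\sF^t$ is attained on representations of $\sM(A_\infty)\rtimes_{\al_\infty}\sO_n$, which is the step your Fourier-coefficient argument cannot supply.

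The \v{S}ilov-ideal step also does not go through as written. For $n>1$ the fixed-point algebra of the gauge action on $\ca(\pi(A_\infty),\{U\otimes S_i\})$ is not $\pi(A_\infty)\otimes I$: it is the core, containing all elements $\pi(x)U^{|\mu|-|\nu|}\otimes S_\mu S_\nu^*$ with $|\mu|=|\nu|$ (for $\mu\neq\nu$ these are not of the form $\pi(y)\otimes I$). So from $\sJ\neq(0)$ you only get that $\sJ$ meets the core, and further argument would be needed to reach $A_\infty$. Moreover, your conjugation $\pi(\al_\infty^n(x))=(U^n\otimes I)\pi(x)(U^{-n}\otimes I)$ uses the multiplier $U\otimes I$, which does not belong to the cover $\ca(\pi(A_\infty),\{U\otimes S_i\})$, so it need not map the ideal $\sJ$ of the cover into itself. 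The paper avoids the core entirely: its Claim~2 shows that every representation of $\sM(A_\infty)\rtimes_{\al_\infty}\sO_n$ is \emph{boundary} for $A\times_\al^t\sT_n^+$, because the restriction to $A$ is a $*$-representation and the Cuntz family $\{U\otimes S_i\}$ generates a copy of $\sO_n$, whence (as in Proposition \ref{P:Fmaximal}) the restriction to $\sO_n^+$ admits only trivial dilations; maximality then forces the cover to be the $\ca$-envelope. If you want to keep a gauge-type argument you would have to analyze ideals intersecting the core, which is substantially more work than the maximality argument the paper uses.
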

\begin{proof}
It is easy to deduce that $A \times_\al^t \sT_n^+$ is unital completely isometrically isomorphic to $A/\sR_\al \rtimes_{\dot\al} \sT_n^+$. Therefore we can always assume that the endomorphism $\al$ is injective.

\vspace{.05in}

\noindent \textit{Claim 1.} The $\ca$-algebra $\ca(A_\infty,\{U\otimes S_i\})$ is a $\ca$-cover for $A \times_\al^t \sT_n^+$.

\noindent \textit{Proof of Claim 1.} Let $(\pi,S, \{T_i\})\in \sF^t$ acting on $H_1\otimes H_2$; then we can extend the pair $(\pi,S)$ of $(A,\al)$ to a covariant unitary pair $(\Pi,U)$ (acting on an $H_3$) of the dynamical system $(A_\infty,\al_\infty)$ by \cite[Proposition 2.3]{Sta93} and dilate $\{T_i\}$ to a Cuntz family $\{S_i\}$ separately. Then $(\Pi, U, \{S_i\})$ defines a representation of $A_\infty \rtimes_{\al_\infty} \sO_n$. Conversely, if $(\Pi,U)$ is a covariant unitary pair of $(A_\infty, \al_\infty)$ and $\{S_i\}$ a Cuntz family, then $(\Pi|_A, U, \{S_i\})$ defines a representation of $A \times_\al^t \sT_n^+$. Hence, there is a canonical completely isometric embedding $A \times_\al^t \sT_n^+ \hookrightarrow \sM(A_\infty) \rtimes_{\al_\infty} \sO_n$.

To end the proof of Claim $1$, note that $\pi(A)\otimes I$ and $U\otimes S_i$ generate the twisted product. Indeed, it suffices to show that they generate $\pi(A_\infty)\otimes I$; this is proved by noting that
\begin{align*}
  \pi[0,x,\al(x), \dots]\otimes I = (U^*\pi(x)U)\otimes I= (U^*\otimes S_i^*) (\pi(x)\otimes I) (U\otimes S_i),
\end{align*}
for all $x\in A$, and then using induction, since $\pi(A_\infty)= \overline{\cup_n U^{-n}\pi(A) U^n}$.

\vspace{.05in}

\noindent \textit{Claim 2.} The representations of $\sM(A_\infty) \rtimes_{\al_\infty} \sO_n$ are boundary for $A \times_\al^t \sT_n^+$.

\noindent \textit{Proof of Claim 2.} Let $\rho \equiv (\pi, \{U\otimes S_i\})$ be a representation of $\sM(A_\infty) \rtimes_{\al_\infty} \sO_n$ acting on a Hilbert space $H$, and let $\nu$ be a dilation of the restriction of $\rho$ to $A \times_\al^t \sT_n^+$ acting on a Hilbert space $K$. Since $\rho|_A$ is a $*$-representation we get that $H$ is $\nu(A)$-reducing. Also, $\{U \otimes S_i\}$ is a Cuntz family, hence $\ca(U\otimes S_i)$ is isomorphic to $\sO_n$. Therefore, $\rho$ defines a unital completely isometric representation of $\sO_n^+$ thus, as in Proposition \ref{P:Fmaximal}, it is maximal. Hence $H$ is reducing also for $\nu(\sO_n^+)$. Therefore $\nu$ is a trivial dilation, thus $\rho$ is maximal.
\end{proof}

\begin{remark}
In view of the theory developed in \cite{Kak11} one could define the universal non-selfadjoint object relative to the class
\begin{align*}
\sF^t_{\text{un}}=\left\{ (\pi,U,\{T_i\}) \in \sF^t \mid U \text{ unitary} \right\}.
\end{align*}
The proof of Theorem \ref{T:twi spr} shows then that it is completely isometrically isomorphic to $A \times_\al^t \sT_n^+$, thus they share the same $\ca$-envelope. When $\al$ is injective, $U$ can be considered alternatively a contraction \cite{Kak11}.

If in addition $\al$ is assumed to be an automorphism they are unital completely isometrically isomorphic to $A \times_\al \sT_n^+$. Indeed, the latter is true because $\cenv(A \times_\al \sT_n^+)= A\rtimes_\al \sO_n = \cenv(A \times_\al^t \sT_n^+)$, via a $\ca$-isomorphism that fixes the non-selfadjoint parts \cite[proof of Proposition 3.4]{Sta93}.
\end{remark}

In the definition of $A \times_\al^t \sT_n^+$ we imposed that it is generated by $\pi_u(A)$ and $\{S_u\otimes S_{u,i}\}$. This is not the algebra generated by $\pi_u(x)S_u\otimes S_{u,i}$, unless $\al$ is unital. Following the generalized $\ca$-crossed products' theory, another way to impose this is the following.

\begin{definition}
Let $A_{\text{nd}} \times_\al^t \sT_n^+$ be the universal operator algebra of $\ell^1(A,\al,\bbF_n^+)$ relative to the class
\begin{align*}
\sF_{\text{nd}}^t=\left\{ (\pi,S,\{T_i\}) \in \sF^t \mid \pi \text{ is non-degenerate} \right\}.
\end{align*}
As a consequence $A_{\text{nd}} \times_\al^t \sT_n^+$ is generated by $\pi(x)\otimes I$ and $S^{|\mu|}\otimes T_{\mu}$, for $x \in A$ and $\mu \in \bbF_n^+$.
\end{definition}

As in Proposition \ref{P:sta-exe} $A_{\text{nd}} \times_\al^t \sT_n^+$ is unital completely isometrically isomorphic to the subalgebra of $A \times_\al^t \sT_n^+$ generated by $\pi_u(A)\otimes I$ and the family $\{S_u\pi_u(1) \otimes T_{u,i}\}$. Indeed, this is true for the $\ca$-algebras generated by the universal representations, whose restriction implies the above identification. Therefore, if $(\pi,U,\{S_i\})$ is a faithful representation of $\sM(B) \rtimes_\be \sO_n$ as in Theorem \ref{T:twi spr}, then $(\pi, U\pi(1), \{S_i\})$ integrates to a maximal completely isometric representation of $A_{\text{nd}} \times_\al^t \sT_n^+$.

\begin{theorem}\label{T:twi spr 2}
The $\ca$-envelope of $A_{\textup{nd}} \times_\al^t \sT_n^+$ is a full corner of $B \rtimes_\be \sO_n$, where $B=(A/\sR_\al)_\infty$, $\be=(\dot\al)_\infty$.
\end{theorem}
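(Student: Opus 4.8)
The plan is to parallel the passage from Theorem~\ref{T: unit} to Theorem~\ref{T: not unit} in the one-variable case, using the ``weighting'' trick together with the gauge-invariance argument of Theorem~\ref{T:twi spr}. As in all the previous results, first reduce to the case $\al$ injective by invoking the unital completely isometric isomorphism $A_{\textup{nd}} \times_\al^t \sT_n^+ \simeq (A/\sR_\al)_{\textup{nd}} \times_{\dot\al}^t \sT_n^+$, so that $B = A_\infty$ and $\be = \al_\infty$.

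The heart of the argument is to identify the corner. Fix a faithful representation $(\pi, U, \{S_i\})$ of $A_\infty \rtimes_{\al_\infty} \sO_n$ on a Hilbert space $H$, let $p = \pi([1_A])$ be the projection corresponding to $1_A$ in the image of $A$ inside $A_\infty$, and set $\fA = A_\infty \rtimes_{\al_\infty} \sO_n$. I would first show that $p\fA p$ is a $\ca$-cover of $A_{\textup{nd}} \times_\al^t \sT_n^+$: the pair $(\pi|_A, U\pi(1), \{S_i\})$ integrates to a completely isometric representation by the remark preceding the statement (which says this integrated representation is maximal), and the generators $\pi(x)\otimes I = \pi(x)$ and $(U\pi(1))^{|\mu|}\otimes S_\mu$ generate $p\fA p$ — this is the corner analogue of Claim~1 in the proof of Theorem~\ref{T:twi spr}, with the weighting by $p$ replacing the passage to multipliers, and uses that $p A_\infty p$-conjugates of $p$ by the weighted isometries recover all of $pA_\infty p$.

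Next comes the \v{S}ilov ideal computation. The twisted tensor product carries the gauge action $\{\be_z\}$ coming from $\sO_n$ (acting trivially on $A_\infty$ and by $S_i \mapsto zS_i$), and $A_{\textup{nd}} \times_\al^t \sT_n^+$ sits inside $p\fA p$ as a $\be_z$-invariant subalgebra. Hence if the \v{S}ilov ideal $\sJ$ in $p\fA p$ were non-trivial it would be $\be_z$-invariant, so it would meet the fixed-point algebra, which here is $pA_\infty p$ (the fixed-point algebra of $\fA$ under the $\sO_n$-gauge action is $A_\infty$, as the twisted product has the same ``diagonal'' as in the $n=1$ case). Moving backwards along the direct limit by conjugating with $\ad_{((U^n p)\,\cdot\,)^*}$ — exactly as in the proof of Theorem~\ref{T: not unit} — one pushes such an element into the copy of $A$, contradicting that the restriction of the quotient map to $A_{\textup{nd}} \times_\al^t \sT_n^+$ is isometric on $A$. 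Therefore $\sJ = (0)$ and $p\fA p = \cenv(A_{\textup{nd}} \times_\al^t \sT_n^+)$.

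Finally, fullness of the corner is the same computation as in Theorem~\ref{T: not unit}: any ideal $\sI$ of $\fA$ containing $p\fA p$ contains a copy of $A$; using an approximate identity $(e_j)$ of $A_\infty$ and the relations $\pi[0,x,\al(x),\dots] = \lim_j U^*\pi(e_j)\,\pi[x,\al(x),\dots]\,\pi(e_j)U$, one gets $A_\infty \subseteq \sI$ inductively, and then $\pi(x)\otimes S_\mu = \lim_j (\pi(x)\pi(e_j)\otimes I)(U^{|\mu|}\otimes S_\mu)(U^{-|\mu|}\otimes I)\cdots \in \sI$ for all $x \in A_\infty$, so $\sI$ contains all generators of $B\rtimes_\be \sO_n$ and is everything. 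The main obstacle I anticipate is Claim~1 — verifying carefully that the weighted generators $\pi(x)$ and $p(U\otimes S_i)$ really do generate the full corner $p\fA p$ rather than a proper subalgebra, and that the corner is full enough to support the gauge-invariance argument; the rest is a faithful transcription of the $n=1$ proofs with $\bbZ$ replaced by the Cuntz algebra and multipliers replaced by the corner.
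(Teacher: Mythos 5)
Your overall architecture (reduce to $\al$ injective, identify $\ca(\pi(A)\otimes I,\{U\pi(1)\otimes S_i\})$ with the corner $p(A_\infty\rtimes_{\al_\infty}\sO_n)p$ by the analogue of Claim~1 of Theorem~\ref{T:twi spr}, then prove fullness with the approximate-unit computation) is exactly the paper's. But the step you designate as the heart of the envelope identification --- the \v{S}ilov-ideal/gauge-invariance argument --- has a genuine error for $n>1$: the fixed-point algebra of the gauge action on $A_\infty\rtimes_{\al_\infty}\sO_n$ is \emph{not} $\pi(A_\infty)\otimes I$ (nor is its compression $pA_\infty p$). Since every monomial has the form $\pi(x)U^{|\mu|-|\nu|}\otimes S_\mu S_\nu^*$, the degree-zero part is $\overline{\spn}\{\pi(x)\otimes S_\mu S_\nu^* : |\mu|=|\nu|\}\cong A_\infty\otimes \sF_n$, where $\sF_n$ is the UHF core of $\sO_n$; the ``same diagonal as in the $n=1$ case'' claim is precisely what fails once $n\geq 2$. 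Consequently, knowing that a nonzero gauge-invariant ideal meets the fixed-point algebra does not put you into $pA_\infty p$, and the ``move backwards with $\ad_{(U^np)^*}$'' step from Theorem~\ref{T: not unit} does not apply to elements involving $S_\mu S_\nu^*$ with $|\mu|=|\nu|\geq 1$. (This is also why the paper's one-variable proofs of Theorems~\ref{T: unit} and \ref{T: not unit} use this gauge argument, but the multivariable proofs do not.)

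The good news is that this flawed step is unnecessary, and you already hold the ingredient that replaces it: the remark preceding the theorem states that $(\pi, U\pi(1),\{S_i\})$ integrates to a \emph{maximal} completely isometric representation of $A_{\textup{nd}}\times_\al^t\sT_n^+$ (maximality being inherited from Claim~2 of Theorem~\ref{T:twi spr}, via Proposition~\ref{P:Fmaximal} for the Cuntz part). A maximal completely isometric representation generates the $\ca$-envelope, so once the corner identification is done there is nothing left to quotient by; this is exactly how the paper argues, and it is how you should conclude rather than via the \v{S}ilov ideal. Two smaller points: in Claim~1 you should spell out the compression identity $p(\pi(x)U\otimes S_i)p = p(\pi(x)\otimes I)p\cdot(U\pi(1)\otimes S_i)$ and the conjugation $(U\pi(1)\otimes S_i)^*(\pi(x)\otimes I)(U\pi(1)\otimes S_i)=p\,(\pi[0,x,\al(x),\dots]\otimes I)\,p$, which is what recovers $p(\pi(A_\infty)\otimes I)p$; and in the fullness computation your displayed factor $U^{-|\mu|}\otimes I$ is not an element of $A_\infty\rtimes_{\al_\infty}\sO_n$, so you cannot multiply by it inside the ideal --- instead, once $\pi(A_\infty)\otimes I\subseteq\sI$, write the generator as $\pi(x)U^{|\mu|}\otimes S_\mu=\lim_j(\pi(x)\otimes I)\cdot(\pi(e_j)U^{|\mu|}\otimes S_\mu)$, a product of an element of $\sI$ with an element of the algebra.
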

\begin{proof}
For the first part, in view of the previous remarks (and the -by now- familiar trick), it suffices to show that, when $\al$ is injective, then $\ca(\pi(A)\otimes I, \{U\pi(1) \otimes S_i\})$ coincides with $p(A_\infty \rtimes_{\al_\infty} \sO_n)p$, where $p=\pi(1)\otimes I$.

Since $U\pi(1) \otimes S_i = p(U\otimes S_i)p$ we obtain that $\ca(\pi(A)\otimes I, \{U\pi(1) \otimes S_i\})$ is a $\ca$-subalgebra of $p(A_\infty \rtimes_{\al_\infty} \sO_n)p$.

For the converse, first recall that $A_\infty \rtimes_{\al_\infty} \sO_n$ is generated by $\pi(A_\infty) \otimes I$ and $\pi(x)U \otimes S_i$ for $x\in A_\infty$ and $i=1, \dots, n$. Hence the generators of $p(A_\infty \rtimes_{\al_\infty} \sO_n)p$ are $p(\pi(A_\infty)\otimes I)p$ and
\[
p(\pi(x)U\otimes S_i)p = (\pi(1) \pi(x)U \pi(1) )\otimes S_i = p(\pi(x)\otimes I) p \cdot (U\pi(1) \otimes S_i).
\]
Due to that equation, it suffices to show that $p(\pi(A_\infty)\otimes I)p$ is in $\ca(\pi_u(A)\otimes I, \{U\pi(1) \otimes S_i\})$, which follows as in the proof of Claim $1$ of Theorem \ref{T:twi spr}.

To prove that $p(A_\infty \rtimes_{\al_\infty} \sO_n)p$ is a full corner, let an ideal $\sI$ of $A_\infty \rtimes_{\al_\infty} \sO_n$ containing $p(A_\infty \rtimes_{\al_\infty} \sO_n)p$. Then $\pi(A)\otimes I \subseteq \sI$ and for an approximate unit $(e_i)$ of $A_\infty$ and $x\in A$ we obtain
\begin{align*}
\pi[0,x,\al(x), \dots]\otimes I = \lim_i (U^*\pi(e_i)\otimes S_i^*)\cdot (\pi(x)\otimes I) \cdot (\pi(e_i)U\otimes S_i) \in \sI.
\end{align*}
Inductively, we get that $\pi(A_\infty)\otimes I \subseteq \sI$. Therefore,
\begin{align*}
\pi(x)U^{|\mu|}\otimes S_\mu = \lim_i \pi(x)\otimes I \cdot \pi(e_i)U^{|\mu|} \otimes S_\mu,
\end{align*}
for every $x\in A_\infty$ and $\mu\in \bbF_n^+$. Thus the non-zero ideal $\sI$ contains the generators of $A_\infty \rtimes_{\al_\infty} \sO_n$, hence it is a trivial ideal.
\end{proof}

\begin{remark}
The previous theorem asserts that a generalization of \cite[Proposition 3.3]{Sta93} for $n>1$ holds for the $\ca$-envelope of $A_{\textup{nd}} \times_\al^t \sT_n^+$. This agrees also with Theorem \ref{T: not unit} for $n=1$.
\end{remark}

As in the one variable case, it is immediate that $A_\text{nd} \times^t_\al \sT_n^+ = A \times_\al^t \sT_n^+$ if and only if $\al(1_A)=1_A$. In the following subsection we examine a class of non-unital systems, so that $A \times_\al^t \sT_n^+$ and $A_{\text{nd}} \times_\al^t \sT_n^+$ differ.

\subsection{Tensor Products with Exel Systems}\label{S:tensor}

Let an Exel system $(A,\al,L)$ and $X_n$ be the $\ca$-correspondence of the Hawaiian earring graph on $n$ edges, over $\bbC$. Let $\sX=\sM_L\otimes X_n$ be the exterior tensor product, which becomes a $\ca$-correspondence over $A\otimes \bbC \simeq A$ in the obvious way, with $\phi_\sX= \phi_{\sM_L}\otimes \id$.

If $(\rho,s)$ is a representation of $X_n$ and $(\pi,t)$ is a representation of $\sM_L$ then $(\pi\otimes \rho, t\otimes s)$ defines a representation of $\sM_L\otimes X_n$, which is injective when $\pi$ is injective. Moreover, when identifying $\sK(\sX)$ with the spatial tensor product $\sK(\sM_L)\otimes \sK(X_n)$ by the $*$-isomorphism $j$, with
\[
j\left(\Theta_{y+\sN_1, x+ \sN_1}\otimes \Theta_{\de_i,\de_j} \right)= \Theta_{(y+\sN_1)\otimes \de_i, (x+ \sN_1)\otimes \de_j},
\]
then $\psi_{t\otimes s}\circ j= \psi_t\otimes \psi_s$. Thus, if $(\pi,S)$ is a representation of $\sM_L$ and $\{T_i\}$ is a Toeplitz-Cuntz family then $(\hat{\pi},\{\hat{T_i}\})$ defines a representation of $\sX$, where $\hat{\pi}=\pi\otimes I$ and $\hat{T_i}=S \otimes T_i$. Hence,
\begin{align*}
\hat{T_i} \hat{\pi}(x)
&= (S \otimes T_i)(\pi(x)\otimes I)
 = (S\pi(x))\otimes T_i\\
&= (\pi(\al(x))S) \otimes T_i
 = (\pi(\al(x))\otimes I) (S\otimes T_i) = \hat{\pi}(\al(x)) \hat{T_i}.
\end{align*}

Conversely, if $(\hat{\pi},t)$ is a representation of $\sX$, let $\hat{T_i}= t((1+\sN_1)\otimes \de_i)$. Then $\hat{T_i}\hat{\pi}(x)= \hat{\pi}(\al(x)) \hat{T_i}$, for all $i=1,\dots,n$ and $x\in A$.

\begin{lemma}\label{L:Katsura ideals}
With the above notations $J_\sX = J_{\sM_L}$, where we identify $A \otimes \bbC$ with $A$.
\end{lemma}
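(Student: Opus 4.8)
The plan is to compute both Katsura ideals explicitly and show they coincide as ideals of $A$. Recall $J_\sX = \ker\phi_\sX^\perp \cap \phi_\sX^{-1}(\sK(\sX))$ and $J_{\sM_L} = \ker\phi_{\sM_L}^\perp \cap \phi_{\sM_L}^{-1}(\sK(\sM_L))$. Since $\phi_\sX = \phi_{\sM_L}\otimes\id_{X_n}$ and $X_n$ is a full, nondegenerate correspondence over $\bbC$ (so $\id\colon\bbC\to\sL(X_n)$ is injective), the first step is to observe that tensoring with the identity on $X_n$ does not change the kernel: $\ker\phi_\sX = \ker\phi_{\sM_L}$, hence $\ker\phi_\sX^\perp = \ker\phi_{\sM_L}^\perp$. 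This uses only that $a\otimes\id_{X_n} = 0$ in $\sL(\sX)$ forces $\phi_{\sM_L}(a)=0$, which follows because $X_n$ has a nonzero vector (indeed is faithful over $\bbC$).

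The second and main step is to show $\phi_\sX^{-1}(\sK(\sX)) = \phi_{\sM_L}^{-1}(\sK(\sM_L))$. Under the $*$-isomorphism $j\colon\sK(\sM_L)\otimes\sK(X_n)\to\sK(\sX)$ recorded in the excerpt, and using that $\sK(X_n)$ is unital (the Hawaiian earring graph on $n$ edges has a single vertex, so $X_n$ is finitely generated projective over $\bbC$ and $\sK(X_n)=\sL(X_n)$ contains $\id_{X_n}$), an operator $\phi_{\sM_L}(a)\otimes\id_{X_n}$ is compact on $\sX$ if and only if $\phi_{\sM_L}(a)$ is compact on $\sM_L$. One direction is immediate: if $\phi_{\sM_L}(a)\in\sK(\sM_L)$ then $\phi_{\sM_L}(a)\otimes\id_{X_n}\in\sK(\sM_L)\otimes\sK(X_n)\cong\sK(\sX)$. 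For the converse, one slices: if $\phi_{\sM_L}(a)\otimes\id_{X_n}\in\sK(\sX)$, compress by $\id_{\sM_L}\otimes\Theta^{X_n}_{\de_i,\de_i}$ (a rank-one projection, since $\{\de_i\}$ is part of an orthonormal basis / generating set of $X_n$) to recover $\phi_{\sM_L}(a)\otimes\Theta_{\de_i,\de_i}$, which lies in $\sK(\sM_L)\otimes\sK(X_n)$; identifying the corner forces $\phi_{\sM_L}(a)\in\sK(\sM_L)$. Combining the two steps, $J_\sX = \ker\phi_{\sM_L}^\perp\cap\phi_{\sM_L}^{-1}(\sK(\sM_L)) = J_{\sM_L}$ as ideals of $A$, where we identify $A\otimes\bbC$ with $A$.

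The step I expect to be the main obstacle is the second one, specifically the careful bookkeeping in the converse inclusion $\phi_\sX^{-1}(\sK(\sX))\subseteq\phi_{\sM_L}^{-1}(\sK(\sM_L))$: one must be sure that compressing a compact operator on the exterior tensor product $\sX=\sM_L\otimes X_n$ by a rank-one projection coming from the $X_n$ leg genuinely lands in $\sK(\sM_L)$ (not merely in $\sL(\sM_L)$), and that $\id_{X_n}$ really is compact, i.e. that $X_n$ is finitely generated projective over $\bbC$. Both hold because the Hawaiian earring graph has one vertex and $n$ edges, so $X_n\cong\bbC^n$ as a Hilbert $\bbC$-module and $\sK(X_n)=M_n(\bbC)$ is unital; once this is in hand the slicing argument is routine. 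The first step and the final assembly are formal.
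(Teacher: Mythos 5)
Your proposal is correct and follows essentially the same route as the paper: the same decomposition into $\ker\phi_\sX=\ker\phi_{\sM_L}$ and $\phi_\sX^{-1}(\sK(\sX))=\phi_{\sM_L}^{-1}(\sK(\sM_L))$, with the easy inclusion obtained exactly as in the paper from $\id_{X_n}=\sum_{i=1}^n\Theta^{X_n}_{\de_i,\de_i}\in\sK(X_n)$. The only difference is in the converse inclusion, where the paper lets the approximate identity $(e_i\otimes\id)$ of $\sK(\sM_L)\otimes\sK(X_n)$ act on $\phi_\sX(x)$ to conclude $\phi_{\sM_L}(x)\in\sK(\sM_L)$, whereas you compress by $\id_{\sM_L}\otimes\Theta^{X_n}_{\de_i,\de_i}$ and identify the corner $\sK(\sM_L)\otimes\Theta^{X_n}_{\de_i,\de_i}\simeq\sK(\sM_L)$; both hinge on the same fact that $\sK(X_n)=M_n(\bbC)$ is unital, so your slicing step is a valid, equally short variant.
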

\begin{proof}
It is immediate that $\ker\phi_\sX= \ker\phi_{\sM_L}$. To show that $\phi^{-1}(\sK(\sX))=\phi^{-1}_{\sM_L}(\sK(\sM_L))$, first let $x\in A$, such that $\phi_{\sM_L}(x)=k\in \sK(\sM_L)$. Then
\begin{align*}
\phi_\sX(x)
& = \phi_{\sM_L}(x) \otimes \id
  = k \otimes \left(\sum_{i=1}^n \Theta^{X_n}_{\de_i,\de_i}\right)
 \, \in \, \sK(\sM_L) \otimes \sK(X_n) = \sK(\sX),
\end{align*}
for the spatial tensor product \cite[Chapter 4]{Lan95}. For $x\in A$ such that $\phi_\sX(x)= \phi_{\sM_L}(x) \otimes \id \in \sK(\sM_L)\otimes \sK(X_n)$, let $(e_i)$ be a c.a.i. of $\sK(\sM_L)$. Then $(e_i \otimes \id)$ is a c.a.i. for $\sK(\sM_L)\otimes \sK(X_n)$. Since,
\begin{align*}
\lim_i \nor{e_i\phi_{\sM_L}(x)-\phi_{\sM_L}(x)} &= \lim_i \nor{(e_i\phi_{\sM_L}(x)-\phi_{\sM_L}(x))\otimes \id}\\
&=\lim_i \nor{e_i\phi_{\sM_L}(x)\otimes \id-\phi_{\sM_L}(x)\otimes \id}\\
&=\lim_i \nor{(e_i\otimes \id)\phi_{\sX}(x)- \phi_\sX(x)}=0,
\end{align*}
we obtain that $\phi_{\sM_L}(x)\in \sK(\sX)$, and the proof is complete.
\end{proof}

Consequently, if $(\pi,t)$ is a covariant representation of $\sM_L$ and $(\rho,s)$ is a covariant representation for $X_n$, i.e., $\{s(\de_i)\}$ is a Cuntz-Krieger family, then $(\pi\otimes \rho, t\otimes s)$ is a covariant representation of $\sX$. Indeed, let $x\in J_\sX$; then $x\in J_{\sM_L}$ by Lemma \ref{L:Katsura ideals}. Thus
\begin{align*}
(\pi\otimes \rho) (x) = \pi(x)\otimes I
& = \psi_t\left(\phi_{\sM_L}(x)\right) \otimes \psi_s\left(\sum_{i=1}^n \Theta^{X_n}_{\de_i,\de_i}\right)\\
& = \psi_{t\otimes s}\circ j \left(\phi_{\sM_L}(x) \otimes \id\right) = \psi_{t\otimes s} \left(\phi_\sX(x)\right).
\end{align*}

From now on the Exel system $(A,\al,L)$ will be assumed that satisfies the conditions of Subsection \ref{S:Ex}, i.e.,
\[
 L(1)=1 \text{ and } \al\circ L (x) = \al(1)x\al(1), \foral x\in A.
\]
Let $(\pi_u, T_u)$ be the universal representation of $\fA(A,\al)$. By Proposition \ref{P:sta-exe} and Theorem \ref{T:main 2},
\[
\ca(\pi_u(A), T_u\pi_u(1)) \simeq A \times_\al ^1 \bbN \simeq A \rtimes_{\al,L}\bbN \simeq \sO_{\sM_L}
\]
via a canonical mapping, and the pair $(\pi_u, T_u\pi_u(1))$ defines a faithful covariant representation of $\sM_L$ that admits a gauge action $\{\be_z\}$. Thus, if $\{S_i\}$ is a Cuntz family, then $(\pi_u\otimes \id, \{(T_u\pi_u(1)) \otimes S_i\})$ defines a covariant representation of $\sX$ that inherits the gauge action $\{\be_z \otimes I\}$ and is faithful on $A$. Therefore by the gauge invariance theorem \cite{Kats04}, the Cuntz-Pimsner algebra $\sO_\sX$ is $*$-isomorphic to the $\ca$-algebra generated by $(\pi_u \otimes \id, \{(T_u\pi_u(1))\otimes S_i\})$. Note that it is the $\ca$-subalgebra $\ca(A_{\text{nd}} \times_\al^t \sT_n^+)$ of $\ca(A \times_\al^t \sT_n^+)$.

The next result follows readily from the previous analysis and the results we established in Subsection \ref{S:multi twi spr}.

\begin{theorem}\label{T:tens}
Let an Exel system that satisfies $(\dagger)$ and $L(1)=1$.
\begin{enumerate}
\item $\sT_{\sX} \simeq A_{\textup{nd}} \times_\al^t \sT_n^+$;
\item the $\ca$-envelope $\sO_{\sX}$ of $\sT_{\sX}^+$ is a full corner of $A_\infty \rtimes_{\al_\infty} \sO_n$;
\item If $(\pi,U, \{S_i\})$ integrates to an injective representation of $A_\infty \rtimes_{\al_\infty} \sO_n$ and $p=\pi(1)\otimes I$, then $p(A_\infty \rtimes_{\al_\infty} \sO_n)p$ is the closed linear span of the monomials $\big(\pi(x) U^{|\mu|-|\nu|} \pi(1) \big)\otimes S_\mu S_\nu^* $, when $|\mu|\geq |\nu|$, and $\big(\pi(1)U^{|\mu|-|\nu|}\pi(x) \big)\otimes S_\mu S_\nu^*$, when $|\mu|\leq |\nu|$, for $x\in A$.
\end{enumerate}
\end{theorem}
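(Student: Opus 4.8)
The plan is to identify $A_{\textup{nd}}\times_\al^t\sT_n^+$ with the tensor algebra $\sT_\sX^+$ and then feed everything into Theorem~\ref{T:twi spr 2} and the Katsoulis--Kribs theorem. First I would record the standing reductions: since $L(1)=1$ the transfer operator is onto and $\al$ is injective (Subsection~\ref{Ss:transfer}), so $\sR_\al=(0)$, $A/\sR_\al=A$, $B=A_\infty$ and $\be=\al_\infty$; this is why the crossed product in the statement is the one over $A_\infty$. The crux is the following equivalence: under $(\dagger)$ and $L(1)=1$, a triple $(\pi,S,\{T_i\})$ lies in $\sF^t_{\textup{nd}}$ if and only if $(\pi\otimes I,\{S\otimes T_i\})$ is a Toeplitz representation of $\sX=\sM_L\otimes X_n$. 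For the nontrivial direction, non-degeneracy of $\pi$ and unitality of $A$ give $\pi(1)=I$, whence $\pi(\al(1))S=S\pi(1)=S$ and, by taking adjoints, $S^*\pi(\al(1))=S^*$; since $S$ is an isometry and $S\pi(y)=\pi(\al(y))S$ we also have $S^*\pi(\al(y))S=\pi(y)$, and applying this with $y=L(x)$ together with $(\dagger)$ yields
\[
\pi(L(x))=S^*\pi(\al\circ L(x))S=S^*\pi(\al(1))\,\pi(x)\,\pi(\al(1))S=S^*\pi(x)S ,
\]
so $(\pi,S)$ is a representation of $\sT(A,\al,L)=\sT_{\sM_L}$ (Theorem~\ref{T:*-iso 1}); tensoring with $\{T_i\}$ regarded as a representation of $X_n$ then produces a Toeplitz representation of $\sX$ exactly as recorded in Subsection~\ref{S:tensor}, and the converse is immediate from the same computations read backwards (here $S^*S=S^*\pi(1)S=\pi(L(1))=I$).

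Now for~(1). The concrete pair $(\rho,S)$ of Subsection~\ref{Ss:toeplitz Exel} on $\sM_\infty$ is injective with $\rho(1)=I$, and $S$ is an isometry because $L(1)=1$; together with a Toeplitz--Cuntz family $\{T_i\}$ generating $\sT_n$ this triple lies in $\sF^t_{\textup{nd}}$, and $(\rho\otimes\id,\{S\otimes T_i\})$ is an injective Toeplitz representation of $\sX$ which admits the gauge action coming from the $X_n$-grading and for which the ideal $\{a\in A:(\rho\otimes\id)(a)\in\psi_{t\otimes s}(\sK(\sX))\}$ is trivial (using $\psi_{t\otimes s}\circ j=\psi_t\otimes\psi_s$ together with the computation $\psi_{t}(\sK(\sM_L))|_A=0$ from the proof of Theorem~\ref{T:*-iso 1}); hence by the gauge-invariant uniqueness theorem for Toeplitz--Cuntz--Pimsner algebras \cite{FowRae99,Kats04} it generates a faithful copy of $\sT_\sX$, whose tensor algebra is $\alg\{\rho(A)\otimes I,\ S^{|\mu|}\otimes T_\mu\}$. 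Pushing the universal pair of $A_{\textup{nd}}\times_\al^t\sT_n^+$ through the equivalence of the previous paragraph gives, by the universal property of $\sT_\sX$, a canonical completely contractive map $\sT_\sX^+\to A_{\textup{nd}}\times_\al^t\sT_n^+$ which is the identity on generators; the concrete pair just described gives, by the universal property of $A_{\textup{nd}}\times_\al^t\sT_n^+$, a canonical completely contractive map $A_{\textup{nd}}\times_\al^t\sT_n^+\to\sT_\sX^+$, again the identity on generators. These maps are therefore mutually inverse, hence complete isometries, so $\sT_\sX^+\simeq A_{\textup{nd}}\times_\al^t\sT_n^+$; passing to the generated $\ca$-algebras yields $\sT_\sX\simeq\ca(A_{\textup{nd}}\times_\al^t\sT_n^+)$ as well, which is item~(1).

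Item~(2) is then immediate: by \cite{KatsKribs06} the $\ca$-envelope of $\sT_\sX^+$ is $\sO_\sX$, while by~(1) and Theorem~\ref{T:twi spr 2} the $\ca$-envelope of $A_{\textup{nd}}\times_\al^t\sT_n^+$ is a full corner of $B\rtimes_\be\sO_n=A_\infty\rtimes_{\al_\infty}\sO_n$. For~(3), Theorem~\ref{T:twi spr 2} realizes that full corner as $p(A_\infty\rtimes_{\al_\infty}\sO_n)p$ with $p=\pi(1)\otimes I$, so it remains to exhibit a spanning set. A generating monomial $t(\xi_1\otimes\de_{i_1})\cdots t(\xi_k\otimes\de_{i_k})\,t(\eta_l\otimes\de_{j_l})^*\cdots t(\eta_1\otimes\de_{j_1})^*$ of $\sO_\sX$ splits as
\[
\bigl(t^{\sM_L}(\xi_1)\cdots t^{\sM_L}(\xi_k)\,t^{\sM_L}(\eta_l)^*\cdots t^{\sM_L}(\eta_1)^*\bigr)\otimes S_\mu S_\nu^* , \qquad \mu=i_1\cdots i_k,\ \nu=j_1\cdots j_l ,
\]
and since $\sM_L$ is a Hilbert bimodule (Theorem~\ref{T:Exel}) the first tensor factor lies in $\sO_{\sM_L}=p(A_\infty\rtimes_{\al_\infty}\bbZ)p$; Theorem~\ref{T:corner} together with Remark~\ref{R:multi lin} (which uses $\al^m\circ L^m(x)=\al^m(1)x\al^m(1)$, hence $pA_\infty p=A$) rewrites it as $\pi(x)U^{k-l}\pi(1)$ with $x\in A$ when $k\ge l$, and as $\pi(1)U^{k-l}\pi(x)$ with $x\in A$ when $k\le l$. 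Substituting into the displayed monomial and taking the closed span gives exactly the description in~(3).

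The step I expect to be the main obstacle is the equivalence opening the argument — that $\pi(L(x))=S^*\pi(x)S$ is forced on non-degenerate $\sF^t_{\textup{nd}}$-triples, which is precisely where $(\dagger)$ enters and is the correct analogue of the computation that goes wrong in \cite[Theorem~4.7]{Exe03} — together with the careful matching of the two universal properties so that the resulting completely contractive maps are genuinely inverse to one another. Once this is in place, (2) is a one-line combination of \cite{KatsKribs06} with Theorem~\ref{T:twi spr 2}, and (3) is routine degree-bookkeeping on top of Theorems~\ref{T:corner} and~\ref{T:twi spr 2}.
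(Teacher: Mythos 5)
Your proposal is correct and follows essentially the same route as the paper: the correspondence between $\sF^t_{\textup{nd}}$-triples and representations of $\sX=\sM_L\otimes X_n$ (with the $(\dagger)$-computation forcing $\pi(L(x))=S^*\pi(x)S$, the analogue of Proposition \ref{P:com diag}), a gauge-invariant uniqueness argument to pin down the universal object, Katsoulis--Kribs together with Theorem \ref{T:twi spr 2} for item (2), and the Hilbert-bimodule reduction via Theorem \ref{T:corner} and Remark \ref{R:multi lin} for item (3). The only, harmless, deviations are that you settle item (1) at the Toeplitz level using the concrete pair $(\rho,S)$ on $\sM_\infty$ where the paper's preceding analysis runs the covariant-level argument with $(\pi_u,T_u\pi_u(1))$, and that your opening ``if and only if'' is only ever used in the forward direction (which is the direction you actually prove).
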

\begin{proof}
It suffices to comment on item $(3)$. Its proof follows in the same way as the second part of Theorem \ref{T:corner}, as explained in Remark \ref{R:multi lin}. (Recall that $p(A_\infty)p=A$ in this case.)
\end{proof}

\begin{remark}\label{R:cex 4}
We mention that in the representation theory of $A \times_\al^t \sT_n^+$, $A_{\text{nd}} \times_\al^t \sT_n^+$ and $\sT_\sX$, the commutant of $\sO_n$ is non-trivial. This provided an isometry $S$ that commutes with the isometries $S_i$, hence we could extend each $(\pi,S)$ to a unitary pair $(\Pi,U)$ without losing control on the isometries $S_i$. The counterexample that follows shows why this step is crucial in order to prove a connection with the twisted tensor product of $(A_\infty,\al_\infty)$.
\end{remark}

\subsection{Counterexample}\label{S:counter}

Let $A=\sO_n$ acting faithfully on some $H$ and let us denote by $S_i$ its generators. Define the $*$-endomorphism $\al$ of $\sO_n$ by
\begin{align*}
\al(x)= \sum_{i=1}^n S_i x S_i^*, \foral x\in A.
\end{align*}
This dynamical system is unital and injective. Therefore $A\times_\al^n \bbN$ is exactly $\ca(\pi_u(A), \{S_{u,i}\})$ of Proposition \ref{P:multi sta}.

First we show that the canonical $*$-epimorphism $q\colon A\times_\al^n \bbN \rightarrow A_\infty \rtimes_{\al_\infty} \sO_n$ is not injective (cf. Remark \ref{R:cex 1}). To this end, fix a family $(\sigma\otimes I,\{U\otimes T_i\})$ that integrates to a faithful representation of $A_\infty \rtimes_{\al_\infty} \sO_n$. Then the representation implies a number of new relations; for example
\begin{align*}
(U\otimes T_1)^*(\sigma(x)\otimes I)(U \otimes T_2)= (U^*\sigma(x)U)\otimes (T_1^*T_2)=0, \foral x\in A.
\end{align*}
Thus, if $q$ were a $*$-isomorphism, then for all families $(\pi,\{\hat{T_i}\})$ of $A\times_\al^n \bbN$ we would have that $\hat{T_1}^*\pi(x)\hat{T_2}=0$. In particular this should be true for the family $(\id,\{S_i\})$. But
\begin{align*}
S_1^*\id(S_1)S_2=S_1^*S_1S_2=S_2 \neq 0.
\end{align*}

In turn the representation that integrates $(\id, \{S_i\})$ is also not faithful. If it were, then $A \times_\al^n \bbN \simeq \ca(\id, \{S_i\})$, and the  latter is $\sO_n$ since it contains and it is contained in $\sO_n$. Thus, the $*$-epimorphism $q$ above should be faithful, being a representation of the simple $\ca$-algebra $\sO_n$, which leads to a  contradiction.

By Proposition \ref{P:Fmaximal} every representation of $A \times_\al \sT_n^+$ is maximal. Therefore the $\ca$-algebra $A \times_\al^n \bbN$ generated by the universal representation of $A \times_\al \sT_n^+$ is $\cenv(A \times_\al \sT_n^+) \simeq A \times_\al ^n \bbN$.

Thus the canonical embedding $A \times_\al \sT_n^+ \rightarrow A_\infty \rtimes_{\al_\infty} \sO_n$ is completely contractive, but not completely isometric. Otherwise, by Proposition \ref{P:Fmaximal} it would extend to a $*$-isomorphism $A \times_\al^n \bbN \rightarrow A_\infty \rtimes_{\al_\infty} \sO_n$, which is a contradiction. This settles Remark \ref{R:cex 2}.

Finally, referring to Remark \ref{R:cex 3}, if $A \times_\al^t \sT_n^+$ were completely isometrically isomorphic to $A \times_\al \sT_n^+$ via the canonical embedding, then they would have $A_\infty \rtimes_{\al_\infty} \sO_n$ as the same $\ca$-envelope. Therefore $A\times_\al^n \bbN \simeq A_\infty \rtimes_{\al_\infty} \sO_n$, which again is a contradiction.

Another consequence of this counterexample is described in the following Remark.

\begin{remark}
For $(A,\al)$ as above, consider $\ell^1(A,\al,\bbF_n^+)$ as a $\nor{\cdot}_1$-dense subalgebra of $A \times_\al \sT_n^+$. Then the homomorphism $\rho\colon \ell^1(A,\al,\bbF_n^+) \rightarrow A_\infty \rtimes_{\al_\infty} \sO_n$ is completely contractive. Moreover, by Example \ref{E:rpn} it admits a Fourier transform, which implies that $\rho$ is also injective.

However, the homomorphism $\rho\colon \left(\ell^1(A,\al,\bbF_n^+), \nor{\cdot}_1 \right) \rightarrow A_\infty \rtimes_{\al_\infty} \sO_n$ is not completely isometric. If it were, then it would extend to a completely isometry to its $\nor{\cdot}_1$-closure $A \times_\al \sT_n^+$, and we have already argued that this leads to a contradiction.
\end{remark}

\section*{Index}

\noindent $\sT_n^+$: \dotfill{ the tensor algebra of the Toeplitz-Cuntz algebra $\sT_n$}

\noindent $\sO_n^+$: \dotfill{ the non-selfadjoint algebra generated by a Cuntz family $\{S_i\}$}

\noindent $\fA(A,\al)$: \dotfill{ Subsection \ref{Ss:sta-exe}}

\noindent $A \times_\al^1 \bbN$: \dotfill{ Subsection \ref{Ss:sta-exe}}

\noindent $\sT(A,\al,L) \simeq \sT_{\sM_L} \simeq \ca(\rho,S)$: \dotfill{ Subsection \ref{Ss:toeplitz Exel}}

\noindent $A \rtimes_{\al,L} \bbN \simeq \sO(K_\al,\sM_L)$: \dotfill{ Exel's crossed product, Subsection \ref{Ss:crpr exel}}

\noindent $A \rtimes_\al \sO_n$: \dotfill{ Cuntz's twisted crossed product, Subsection \ref{Ss:twisted}}

\noindent $A \times_\al^n \bbN$: \dotfill{ Stacey's crossed product of multiplicity $n$, Subsection \ref{S:multi n sta}}

\noindent $A \times_\al \sT_n^+$: \dotfill{ Subsection \ref{S:multi spr}}

\noindent $A_{\text{nd}} \times_\al \sT_n^+$: \dotfill{ Subsection \ref{S:multi spr}}

\noindent $A \times_\al^t \sT_n^+$: \dotfill{ Subsection \ref{S:multi twi spr}}

\noindent $A_{\textup{nd}} \times_\al^t \sT_n^+$: \dotfill{ Subsection \ref{S:multi twi spr}}

\noindent $\sT_\sX^+$, where $\sX = \sM_L \otimes X_n$: \dotfill{ Subsection \ref{S:tensor}}

\bibliographystyle{plain}

\end{document}